\newtheorem{theorem}{Theorem}[section]
\newtheorem{lemma}[theorem]{Lemma}
\newtheorem{corollary}[theorem]{Corollary}
\newtheorem{definition}[theorem]{Definition}
\newtheorem{proposition}[theorem]{Proposition}
\newtheorem{remark}[theorem]{Remark}
\newtheorem{example}[theorem]{Example}
\def\cP{{\mathcal P}} 
\def\cC{{\mathcal C}} \def\cD{{\mathcal D}}    \def\cL{{\mathcal L}}        \def\rF{{\textrm F}}    \def\rG{{\rm G}} \def\rE{{\rm E}} \def\rC{{\rm C}}  \def\rQ{{\textrm Q}}  \def\rU{{\rm U}} 
\def\cH{{\mathcal H}}   \def\cB{{\mathcal B}} \def\cS{{\mathcal S}} 
\def\cZ{{\mathcal Z}} \def\cO{{\mathcal O}} 
\def\cI{{\mathcal I}}       \def\cT{{\mathcal T}}
\def\cA{{\mathcal A}}    
\def\cK{{\mathcal K}} 
\def\bbN{{\mathbb N}}  \def\bbZ{{\mathbb Z}}  \def\bbQ{{\mathbb Q}}
    \def\bbF{{\mathbb F}}
\def\bbC{{\mathbb C}}    
  \def\bbD{{\mathbb D}}
         \def\bfU{{\bf U}}
\def\Hom{\mbox{\rm Hom}}  
       \def\im{\mbox{\rm Im}\,}
\def\Ext{{\rm Ext}}   
\def\dim{{\rm dim}}   \def\End{\mbox{\rm End}}
\def\Aut{\mbox{\rm Aut}}   \def\Stab{{\rm Stab}} \def\Ind{\mbox{\rm Ind}}  \def\Res{\mbox{\rm Res}}\def\ind{\mbox{\rm ind}}\def\res{\mbox{\rm res}}  
  \def\tr{\mbox{\rm tr}}
\def\rep{\mbox{\rm rep}}
\def\IC{\mbox{\rm{IC}}}
\def\Id{\mbox{\rm Id}}
\def\Gr{\mbox{\rm Gr}}
\newcommand\ue{{\underline{e}}}
\newcommand\fg{\mathfrak g}
\newcommand\fn{\mathfrak n}
\newcommand\tw{\mathrm{tw}}
\newcommand\ex{\mathrm{ex}}
\newcommand\red{\mathrm{red}}
\newcommand\rad{\mathrm{rad}}
\begin{document}

\title[]{Sheaf realization of Bridgeland's Hall algebra of Dynkin type}

\author{Jiepeng Fang, Yixin Lan, Jie Xiao}

\address{School of Mathematical Sciences, Peking University, Beijing 100871, P.R. China.
{\it Current address}: Department of Mathematics and New Cornerstone Science Laboratory, The University of Hong Kong, Pokfulam, Hong Kong, Hong Kong SAR, P. R. China}
\email{fangjp@pku.edu.cn (J. Fang)}
\address{Academy of Mathematics and Systems Science, Chinese Academy of Sciences, Beijing 100190, P. R. China.
{\it Current address}: Max Planck Institute for mathematics,Vivatsgasse,Bonn, 53111, North Rhine-Westphalia, German}
\email{lanyixin@amss.ac.cn (Y. Lan)}
\address{School of Mathematical Sciences, Beijing Normal University, Beijing 100875, P. R. China}
\email{jxiao@bnu.edu.cn (J. Xiao)}

\subjclass[2020]{16G20, 17B37, 18N25}

\keywords{two-periodic projective complex, Hall algebra, perverse sheaf, canonical basis}

\bibliographystyle{abbrv}

\begin{abstract}
As one of results in \cite{Bridgeland-2013}, Bridgeland realized the quantum group $\bfU_v$ via the localization of Ringel-Hall algebra for the two-periodic projective complexes of quiver representations over a finite field. In the present paper, we generalize Lusztig's categorical construction for the nilpotent part $\bfU_v^+$ to Bridgeland's Hall algebra of Dynkin type. In particular, we obtain a basis of the Ringel-Hall algebra for the two-periodic projective complexes which has the positivity, and we categorify an integral form of the generic Bridgeland's Hall algebra which is isomorphic to the Poisson integral form of $\bfU_v$, and obtain a $\mathbb{Z}[v,v^{-1}]$-basis of this integral form. 
\end{abstract}

\maketitle

\setcounter{tocdepth}{1}\tableofcontents

\section{Introduction}

\subsection{Background}

Let $\mathbf{Q}$ be a finite acyclic quiver. Then its underlying graph determines a symmetric generalized Cartan matrix. Let $\fg$ be the corresponding Kac-Moody Lie algebra, and $\bfU_v$ be the quantized enveloping algebra (also called the quantum group) of $\fg$. Realization of $\bfU_v$ via category of quiver representations and its related categories such as the category of two-periodic projective complexes, is an important problem in representation theory, and there are developments for this problem. 

Ringel creatively introduced Hall algebra method to realize the nilpotent part $\bfU_v^+$ of $\bfU_v$ in \cite{Ringel-1990}. He defined an associative algebra $\cH(\cA_q)$ for the category $\cA_q=\rep_{\bbF_q}(\mathbf{Q})$ of finite-dimensional $\mathbf{Q}$-representations over a finite field $\bbF_q$, which is called Ringel-Hall algebra. If $\mathbf{Q}$ is a Dynkin quiver, he proved this algebra is isomorphic to $\bfU_{v=\sqrt{q}}^+$, and proved the existence of Hall polynomials which enable him to define the generic Ringel-Hall algebra to realize $\bfU_v^+$. 

Lusztig used perverse sheaves on the varieties of $\mathbf{Q}$-representations over $\overline{\bbF}_q$ to categorify $\bfU_v^+$ in \cite{Lusztig-1991}. He defined the induction and restriction functors which induce a bialgebra structure on the Grothendieck group $\cK^+$ arising from certain perverse sheaves, and proved $\cK^+$ is isomorphic to the integral form $_{\cZ}\bfU_v^+$ of $\bfU_v^+$, where $\cZ=\bbZ[v,v^{-1}]$. The categorification provides $_{\cZ}\bfU_v^+$ with a $\cZ$-basis given by images of simple perverse sheaves, which is called the canonical basis. This basis has very remarkable properties and plays an important role in Lie theory.

The relation between Ringel's realization and Lusztig's categorification is given by the sheaf-function correspondence, see \cite{Lusztig-1998} and \cite{Xiao-Xu-Zhao-2019}.

Bridgeland used the Hall algebra $\cH^{\tw}{\cC_2(\cP_{q})}$ for the category $\cC_2(\cP_{q})$ of two-periodic projective complexes of $\cA_q$ to realize $\bfU_{v=\sqrt{q}}$ globally in \cite{Bridgeland-2013}. He found that for any projective representation $P\in \cA_q$, the elements $b_{K_P},b_{K_P}^*$ corresponding to the contractible complexes $K_P,K_P^*$ satisfy the Ore conditions in the Hall algebra for $\cC_2(\cP_{\bbF_q})$, and defined his Hall algebra $\cD\cH^{\red}(\cA_q)$ to be a reduced quotient of the localization algebra with respect to these elements. If $\mathbf{Q}$ is a Dynkin quiver, he proved his Hall algebra is isomorphic to $\bfU_{v=\sqrt{q}}$.

It is natural to consider whether it is possible to categorify Bridgeland's Hall algebra by perverse sheaves in Lusztig's style. For example, Bridgeland asked this question in \cite{Bridgeland-2013}. Qin made a progress in the case that $\mathbf{Q}$ is a Dynkin quiver in \cite{Qin-2016}. He used perverse sheaves on cyclic quiver varieties to categorify $\bfU_v$, and obtained a basis which contains Lusztig's dual canonical basis of $\bfU_v^+$ up to scalars based on the work by Hernandez and Leclerc in \cite{Hernandez-Leclere-2015}. We also see that Scherotzke-Sibilla gave a geometric construction of $\bfU_v$ using Nakajima categories in \cite{Scherotzke-Sibilla-2016}, and there are other global bases for $\bfU_v$ constructed by Berenstein-Greenstein in \cite{Berenstein-Greenstein-2017} and Shen in \cite{Shen-2022}.

\subsection{Main result}

In the present paper, we give an answer for Bridgeland's question more directly. Let  $\mathbf{Q}$ be a Dynkin quiver, $k=\overline{\bbF}_q$ and $\cC_2(\cP)$ be the category of two-periodic projective complexes of $\rep_k(\mathbf{Q})$ over $k$, we define an affine $k$-variety $\rC_\ue$ with an algebraic group $\rG_{\ue}$-action to parametrize the objects in $\cC_2(\cP)$ of projective dimension vector pair $\ue$. Let $\cD^b_{\rG_{\ue},m}(\rC_{\ue})$ be the subcategory of the $\rG_{\ue}$-equivariant bounded derived category of constructible $\overline{\bbQ}_l$-sheaves on $\rC_{\ue}$ consisting of mixed Weil complexes, and $\cK_\ue$ be its Grothendieck group.

For any $\ue=\ue'+\ue''$, inspired by Lusztig's induction functor and restriction functor, we define our induction functor and restriction functor
\begin{align*}
&\Ind^{\ue}_{\ue',\ue''}:\cD^b_{\rG_{\ue'},m}(\rC_{\ue'})\boxtimes \cD^b_{\rG_{\ue''},m}(\rC_{\ue''})\rightarrow \cD^b_{\rG_\ue,m}(\rC_{\ue}),\\
&\Res^{\ue}_{\ue',\ue''}:\cD^b_{\rG_\ue,m}(\rC_{\ue})\rightarrow \cD^b_{\rG_{\ue'}\times\rG_{\ue''},m}(\rC_{\ue'}\times \rC_{\ue''}).
\end{align*}

We prove that all induction functors induce a multiplication on the direct sum $\cK=\bigoplus_{\ue}\cK_\ue$. By the sheaf-function correspondence, we show that the trace map induces an isomorphism from $\cK$ to the Hall algebra for $\cC_2(\cP)$, which can derive Bridgeland's Hall algebra after taking the localization and the reduced quotient, see Theorem \ref{induction algebra}. This is a constructible sheaves realization of Bridgeland's Hall algebra.

We prove that the restriction functor $\Res^{\ue}_{\ue',\ue''}$ can be restricted to be a functor between the subcategories consisting of mixed semisimple complexes in Proposition \ref{restriction-hyperbolic}, that is, 
$$\Res^{\ue}_{\ue',\ue''}:\cD^{b,ss}_{\rG_\ue,m}(\rC_{\ue})\rightarrow \cD^{b,ss}_{\rG_{\ue'},m}(\rC_{\ue'})\boxtimes \cD^{b,ss}_{\rG_{\ue''},m}(\rC_{\ue''}).$$
Let $\cK^{ss}$ be the Grothendieck group of $\cD^{b,ss}_{\rG_{\ue},m}(\rC_{\ue})$ which has a $\cZ$-module structure defined by $v.[L]=[L[-1](-\frac{1}{2})]$. We prove that these restriction functors induce a comultiplication on the direct sum $\cK^{ss}=\bigoplus_{\ue}\cK^{ss}_{\ue}$, and then the graded dual 
$$\cK^{ss,*}=\bigoplus_{\ue}\cK^{ss,*}_{\ue}=\bigoplus_{\ue}\Hom_{\cZ}(\cK^{ss}_{\ue},\cZ)$$ 
naturally becomes a $\cZ$-algebra. By the sheaf-function correspondence, we show that the trace map induces an isomorphism from $\cK^{ss,*}$ to the Hall algebra $\cH^{\tw}{\cC_2(\cP_{q})}$ for $\cC_2(\cP)$, see Theorem \ref{dual algebra}. Moreover, under this isomorphism, we find the elements $\tilde{I}^*_{K_P},\tilde{I}^*_{K_P^*}\in \cK^{ss,*}$ which correspond to elements $b_{K_P},b_{K_P^*}$ for the contractible complexes $K_P,K_P^*$. As Bridgeland took the localization with respect to the elements $b_{K_P},b_{K_P^*}$ and the reduced quotient, we take the localization of $\cK^{ss,*}$ with respect to the elements $\tilde{I}^*_{K_P},\tilde{I}^*_{K_P^*}$ and take the corresponding reduced quotient. Then the trace map induce an isomorphism from $\cD\cK^{ss,*,\red}$ to Bridgeland's Hall algebra. This is a perverse sheaves realization of Bridgeland's Hall algebra.

\begin{theorem}[=\bf{Theorem \ref{Bridgeland's algebra via perverse sheaf}}]
The subset 
$\{\tilde{I}_{K_P}^*, \tilde{I}_{K_P^*}^*|P\in \cP\}\subset \cK^{ss,*}$
satisfies the Ore conditions, and so there is a well-defined localization
$$\cD\cK^{ss,*}=\cK^{ss,*}[(\tilde{I}_{K_P}^*)^{-1}, (\tilde{I}_{K_P^*}^*)^{-1}|P\in \cP]$$
with a reduced quotient 
$$\cD\cK^{ss,*,\red}=\cD\cK^{ss,*}/\langle \tilde{I}_{K_P}^**_r\tilde{I}_{K_P^*}^*-1|P\in \cP\rangle$$
such that the algebra isomorphisms (\ref{three algebra isomorphism}) induce algebra isomorphisms
\begin{align*}
\cD\cH(\cA_q)\cong \cD\tilde{\cH}^*(&\cA_q)\cong \bbC\otimes_{\cZ}\cD\cK^{ss,*},\\
\cD\cH^{\red}(\cA_q)\cong \cD\tilde{\cH}^{*,\red}(&\cA_q)\cong \bbC\otimes_{\cZ}\cD\cK^{ss,*,\red}.
\end{align*}
\end{theorem}

The $\cZ$-coalgebra $\cK^{ss}_{\ue}$ has a basis $\cI_{\ue}=\{I_{M_\bullet}|\cO_{M_\bullet}\subset \rC_{\ue}\}$, where $I_{M_\bullet}$ is the image of $\IC(\cO_{M_\bullet},\overline{\bbQ}_l)(\frac{\dim \cO_{M_\bullet}}{2})$ in the Grothendieck group for the $\rG_{\ue}$-orbit $\cO_{M_\bullet}\subset \rC_{\ue}$. We prove that the $\cZ$-basis $\cI=\bigsqcup_{\ue}\cI_{\ue}$ of $\cK^{ss}$ is bar-invariant and has positivity. Dually, let $\cI^*_{\ue}=\{I^*_{M_\bullet}|\cO_{M_\bullet}\subset \rC_{\ue}\}\subset \cK^{ss,*}_{\ue}$ be the dual basis of $\cI_{\ue}$, then the $\cZ$-basis $\cI^*=\bigsqcup_{\ue}\cI^*_{\ue}$ of $\cK^{ss,*}$ is bar-invariant and has positivity, see Proposition. 

\begin{proposition}[={\bf Proposition \ref{positivity}}]
The $\cZ$-basis $\cI$ of $\cK^{ss}$ is bar-invariant and it has positivity. Dually, the $\cZ$-basis $\cI^*$ of $\cK^{ss,*}$ is bar-invariant and it has positivity. More precisely, we have $\overline{I_{M_{\bullet}}}=I_{M_{\bullet}}, \overline{I^*_{M_{\bullet}}}=I^*_{M_{\bullet}}$, and if 
\begin{align*}
r(I_{L_\bullet})=\sum_{M_\bullet, N_\bullet} \zeta^{L_\bullet}_{M_\bullet,N_\bullet}(v)I_{M_\bullet}\otimes I_{N_\bullet},\ I^*_{M_{\bullet}}*_rI^*_{N_{\bullet}}=\sum_{L_{\bullet}}\xi^{L_{\bullet}}_{M_{\bullet},N_{\bullet}}(v)I^*_{L_{\bullet}}
\end{align*}
then $\zeta^{L_{\bullet}}_{M_{\bullet},N_{\bullet}}(v), \xi^{L_{\bullet}}_{M_{\bullet}N_{\bullet}}(v)\in \bbN[v,v^{-1}]$. Moreover, we have 
$$\xi^{L_{\bullet}}_{M_\bullet N_\bullet}(v)=\zeta^{L_{\bullet}}_{M_\bullet N_\bullet}(v)=v^{-\Vert\ue_{M_\bullet},\ue_{N_\bullet}\Vert}\zeta^{L_{\bullet}}_{N_\bullet M_\bullet}(v^{-1}).$$
\end{proposition}

Let $\cT^{\red}$ be the subalgebra of $\cD\cK^{ss,*,\red}$ generated by $\tilde{I}^*_{K_P},\tilde{I}^*_{K_P^*}$ for any $K_P,K_P^*$, then it is isomorphic to the group algebra $\cZ[K(\cA)]$ of the Grothendieck group $K(\cA)$ of $\rep_{\overline{\bbF}_q}(\mathbf{Q})$ over $\cZ$, which has a $\cZ$-basis $\tilde{\cI}^{*,\red}=\{\tilde{I}^*_\alpha|\alpha\in \cK(\cA)\}$. By an induction depending on the degeneration order of orbits, we prove that the multiplication $*_r$ defines a free $\cT^{\red}$-module structure on $\cD\cK^{ss,*,\red}$ with a basis $\cI^{*,\rad}=\{I^*_{M_\bullet}|M_\bullet\ \textrm{is radical}\}$.  Thus $\cD\cK^{ss,*,\red}$ has a $\cZ$-basis $\tilde{\cI}^{*,\red}*_r\cI^{*,\rad}$.

\begin{theorem}[={\bf Theorem \ref{global basis}}]
(a) The algebra $\cD\cK^{ss,*}$ has a $\cZ$-basis $\tilde{\cI}^**_r\cI^{*,\rad}$.\\
(b) The algebra $\cD\cK^{ss,*,\red}$ has a $\cZ$-basis $\tilde{\cI}^{*,\red}*_r\cI^{*,\rad}$.
\end{theorem}

We compare our basis with Lusztig's canonical basis for $\bfU_v^+$. The IC sheaves behave significantly differently in two constructions. For example, in Lusztig's construction, the Chevalley generator $E_i$ corresponds to the sheaf $\overline{\bbQ}_l|_{\cO_{S_i}}=\IC(\cO_{S_i},\overline{\bbQ}_l)$ which gives a basis element. While in our construction, inspired by Bridgeland's result, we should consider the sheaf $\overline{\bbQ}_l|_{\cO_{C_{S_i}}}$, where $C_{S_i}$ is the minimal projective resolution of $S_i$. It is clear that $\overline{\bbQ}_l|_{\cO_{C_{S_i}}}\not=\IC(\cO_{C_{S_i}},\overline{\bbQ}_l)$ and the former does not give a basis element. However, we observe that it is better to consider the dual of basis elements (under $\Hom_{\cZ}(-,\cZ)$), and obtain a relation between Lusztig's canonical basis and the basis $\tilde{\cI}^{*,\red}*_r\cI^{*,\rad}$ for maximal orbits, see Proposition \ref{rigid lemma}.

We prove that the $\cZ$-algebra $\cD\cK^{ss,*,\red}$ is isomorphic to the extension counting integral form of the generic Bridgeland's Hall algebra, see Theorem \ref{two integral forms}. This integral form is isomorphic to the Poisson integral form (different from Lusztig's integral form) of the quantized enveloping algebra defined in \S \ref{Quantized enveloping algebra}. In particular, under these isomorphisms, the $\cZ$-basis $\tilde{\cI}^{*,\red}*_r\cI^{*,\rad}$ of $\cD\cK^{ss,*,\red}$ gives us a $\cZ$-basis of the Poisson integral form of the quantized enveloping algebra.

\subsection{Structure of the paper}

In \S \ref{Induction functor and restriction functor}, we define our induction and restriction functors to set our stage. These definitions are inspired by Lusztig's induction and restriction functors in \S \ref{Lusztig's categorification}. In \S \ref{Bridgeland's Hall algebra via functions}, we give a geometric construction of  Bridgeland's Hall algebra in terms of functions. In \S \ref{Constructible sheaves realization of Bridgeland's Hall algebra}, we review the sheaf-function correspondence, and use the induction functors to give the constructible sheaves realization of Bridgeland's Hall algebra. In \S \ref{Perverse sheaves realization of Bridgeland's Hall algebra}, we use the restriction functors and subcategories of semisimple complexes to give the perverse sheaves realization of Bridgeland's Hall algebra. In \S \ref{Basis}, we construct the bases and study their properties. In \S \ref{compare with Lusztig}, we compare the integral forms and the bases.

\subsection*{Acknowledgements}

J. Fang is partially supported by National Key R\&D Program of China (Grant No. 2020YFE0204200). Y. Lan is partially supported by National Natural Science Foundation of China (Grant No. 12288201). J. Xiao is partially supported by National Natural Science Foundation of China (Grant No. 12031007).

\section{Preliminaries}

Throughout this paper, we fix a  Dynkin quiver $\mathbf{Q}=(I,H,s,t)$ whose underlying graph is a Dynkin diagram of the form $A_n,D_n,E_6,E_7$ or $E_8$. Let $(a_{ij})_{i,j\in I}$ be the corresponding Cartan matrix,  and $\fg$ be the corresponding complex semisimple Lie algebra.

\subsection{Quantized enveloping algebra}\label{Quantized enveloping algebra}

We refer to \cite[Chapter 3]{Lusztig-1993} for details.

Let $\bbC(v)$ be the field of rational functions in $v$ with coefficients in $\bbC$, and let $\cZ=\bbZ[v,v^{-1}]$ be the subring of Laurent polynomials with coefficients in $\bbZ$. Let
$$[n]=\frac{v^n-v^{-n}}{v-v^{-1}},\ [n]!=\prod^n_{l=1}[l]\ \textrm{for any}\ n\in \bbN.$$

The quantized enveloping algebra $\bfU_v$ of the Lie algebra $\fg$ is the $\bbC(v)$-algebra generated by $E_i,F_i,K_i,K_i^{-1}$ for any $i\in I$ subject to the following relations
\begin{align*}
&K_iK_i^{-1}=K_i^{-1}K_i=1,K_iK_j=K_jK_i\ \textrm{for any}\ i,j\in I;\\
&K_iE_j=v^{a_{ij}}E_jK_i,\ K_iF_j=v^{-a_{ij}}F_jK_i\ \textrm{for any}\ i,j\in I;\\
&E_iF_j-F_jE_i=\delta_{i,j}\frac{K_i-K_i^{-1}}{v-v^{-1}}\ \textrm{for any}\ i,j\in I;\\
&\sum_{s=0}^{1-a_{ij}}(-1)^sE_j^{(s)}E_iE_j^{(1-a_{ij}-s)}=0 \ \textrm{for any}\  i\not=j\ \textrm{in}\ I;\\
&\sum_{s=0}^{1-a_{ij}}(-1)^sF_j^{(s)}F_iF_j^{(1-a_{ij}-s)}=0 \ \textrm{for any}\  i\not=j\ \textrm{in}\ I,
\end{align*}
where $E_i^{(n)}=E_i^n/[n]!, F_i^{(n)}=F_i^n/[n]!$ for any $i\in I$ and $n\in \bbN$. We denote the subalgebras $\bfU_v^+=\langle E_i\mid i\in I\rangle,\ \bfU_v^0=\langle K_i,K_i^{-1}\mid i\in I\rangle,\ \bfU_v^-=\langle F_i\mid i\in I\rangle$.

The Lusztig integral form $_{\cZ}\bfU_v$ of $\bfU_v$ is the $\cZ$-subalgebra of $\bfU_v$ generated by $E_i^{(n)}, F_i^{(n)}, K_i,K_i^{-1}$ for any $i\in I$ and $n\in \bbN$. The Lusztig integral form $_{\cZ}\bfU_v^+$ of $\bfU_v^+$ is the $\cZ$-subalgebra of $\bfU_v^+$ generated by $E_i^{(n)}$ for any $i\in I$ and $n\in \bbN$.

We refer to \cite[Definition 5.5]{Murphy-2018} for the definition of the Poisson integral form. Let $\alpha_1,\ldots,\alpha_n$ be all simple roots, and $w_0$ be the longest element in the Weyl group. We take a reduced expression $w_0=s_{i_1}\cdots s_{i_N}$ of $w_0$ into simple reflections, then $\beta_l=s_{i_1}\cdots s_{i_{l-1}}(\alpha_{i_l}), l=1,\ldots,N$ are all positive roots. For any $i\in I$, let $T_i:\bfU_v\rightarrow \bfU_v$ be the algebra isomorphism defined in \cite[Section 1.3]{Lusztig-1990.3}. For $l=1,\ldots,N$, let $E_{\beta_l}=T_{i_1}^{-1}\ldots T_{i_{l-1}}^{-1}(E_{i_l}), F_{\beta_l}=T_{i_1}^{-1}\ldots T_{i_{l-1}}^{-1}(F_{i_l})$. The Poisson integral form $_{\cZ}\bfU_v^P$ of $\bfU_v$ is the $\cZ$-subalgebra $\bfU_v$ generated by $(v^2-1)E_{\beta_l},(v^2-1)F_{\beta_l},K_i,K_i^{-1}$ for any $i\in I$ and $l=1,\ldots,N$. The Poisson integral form $_{\cZ}\bfU_v^{+,P}$ of $\bfU_v^+$ is the $\cZ$-subalgebra of $\bfU_v^+$ generated by $(v^2-1)E_{\beta_l}$ for any $l=1,\ldots,N$.

\subsection{Two-periodic projective complex}\label{Category of two-periodic projective complexes}

We refer to \cite[\S 2]{Fang-Lan-Xiao-2024} for details.

Let $K$ be a field, $\cA$ be a $K$-linear abelian category, and $\cP$ be the full subcategory of $\cA$ consisting of projective objects. Let $\cC_2(\cA)$ be the category of two-periodic complexes of $\cA$. Its objects are of the form
$$\xymatrix@C=1cm{{M_\bullet=(M^1,M^0,d^1,d^0)=M^1}  \ar@<.5ex>[r]^-{d^1} &M^0, \ar@<.5ex>[l]^-{d^0}}$$
where $M^j\in \cA$ and $d^j\in \Hom_{\cA}(M^j,M^{j+1})$ such that $d^{j+1}d^j=0$ for any $j\in \bbZ_2$. A morphism $f:M_\bullet\rightarrow N_\bullet$ is given by a pair $f=(f^1,f^0)$, where $f^j\in \Hom_{\cA}(M^j,N^j)$ such that $d^j_Nf^j=f^{j+1}d^j_M$  for any $j\in \bbZ_2$, that is, the following diagram commutes
$$\xymatrix@C=1.5cm{M^1 \ar@<.5ex>[r]^-{d_M^1} \ar[d]_-{f^1} &M^0 \ar@<.5ex>[l]^-{d_M^0} \ar[d]^-{f^0}\\ N^1 \ar@<.5ex>[r]^-{d_N^1} &N^0. \ar@<.5ex>[l]^-{d_N^0}}$$

An object $M_{\bullet}\in \cC_2(\cA)$ is said to be a two-periodic projective complex, if $M^j\in \cP$ for any $j\in \bbZ_2$. Let $\cC_2(\cP)$ be the full subcategory of $\cC_2(\cA)$ consisting of two-periodic projective complexes. Note that $\cC_2(\cA)$ is an abelian category and $\cC_2(\cP)$ is an extension closed subcategory, thus $\cC_2(\cP)$ is an exact category in the sense of \cite{Quillen-1973}. For any objects $M_\bullet,N_\bullet\in \cC_2(\cP)$, we can identify $\Ext^1_{\cC_2(\cA)}(M_\bullet,N_\bullet)=\Ext^1_{\cC_2(\cP)}(M_\bullet,N_\bullet)$, where the left hand side is the extension group in an abelian category, and the right hand side is the extension group in an exact category.

Two morphisms $f,g\in \Hom_{\cC_2(\cP)}(M_\bullet, N_\bullet)$ are said to be homotopic, denoted by $f\sim g$, if there exists $s^j\in \Hom_{\cA}(M^j,N^{j+1})$ such that $f^j-g^j=d^{j+1}_Ns^j+s^{j+1}d^j_M$ for any $j\in \bbZ_2$. An object $M_\bullet\in \cC_2(\cP)$ is said to be a contractible complex, if $1_{M_\bullet}\sim 0$. For any $P\in \cP$, there are two contractible complexes
$$\xymatrix@C=1cm{K_P=P \ar@<.5ex>[r]^-{1} &P \ar@<.5ex>[l]^-{0},\  K_P^*=P \ar@<.5ex>[r]^-{0} &P. \ar@<.5ex>[l]^-{1}}$$

Let $\cK_2(\cP)$ be the homotopy category of two-periodic projective complexes, whose objects are the same as $\cC_2(\cP)$ and 
$$\Hom_{\cK_2(\cP)}(M_\bullet, N_\bullet)=\Hom_{\cC_2(\cP)}(M_\bullet, N_\bullet)/\textrm{Htp}(M_\bullet, N_\bullet),$$
where $\textrm{Htp}(M_\bullet, N_\bullet)=\{f\in\Hom_{\cC_2(\cP)}(M_\bullet, N_\bullet)\mid f\sim 0\}$. By \cite[Proposition 7.1]{Peng-Xiao-1997}, $\cC_2(\cP)$ is a Frobenius category whose projective-injective objects are contractible complexes, and its stable category is $\cK_2(\cP)$. By Happel's theorem \cite{Happel-1988}, $\cK_2(\cP)$ is a triangulated category.

The shift functors on $\cC_2(\cP),\cK_2(\cP)$ are involutions, denoted by
$$\xymatrix@C=1cm{-^*:M_\bullet=(M^1 \ar@<.5ex>[r]^-{d^1} &M^0 \ar@<.5ex>[l]^-{d^0}) \ar@{|->}[r] &M_\bullet^*=(M^0 \ar@<.5ex>[r]^-{-d^0} &M^1 \ar@<.5ex>[l]^-{-d^1})}.$$

The following lemma is a refinement of  \cite[Lemma 3.3]{Bridgeland-2013}.

\begin{lemma}[{\cite[Lemma 6.4]{Fang-Lan-Xiao-2024}}]\label{bijection between Ext and Hom}
For any $M_\bullet,N_\bullet,L_\bullet\in \cC_2(\cP)$, there is a bijection 
$$\Ext^1_{\cC_2(\cP)}(M_\bullet,N_\bullet)\cong \Hom_{\cK_2(\cP)}(M_\bullet,N_\bullet^*).$$
Moreover, if $\Ext^1_{\cC_2(\cP)}(M_\bullet,N_\bullet)_{L_\bullet} \neq\varnothing$, then above bijection restricts to
$$\Ext^1_{\cC_2(\cP)}(M_\bullet,N_\bullet)_{L_\bullet}\cong \Hom_{\cK_2(\cP)}(M_\bullet,N_\bullet^*)_{L_\bullet^*},$$
where the subset $\Ext^1_{\cC_2(\cP)}(M_\bullet,N_\bullet)_{L_\bullet}\subset\Ext^1_{\cC_2(\cP)}(M_\bullet,N_\bullet)$ consists of the equivalence classes of short exact sequences whose middle terms are isomorphic to $L_{\bullet}$ in $\cC_2(\cP)$, and the subset $\Hom_{\cK_2(\cP)}(M_\bullet,N_\bullet^*)_{L_\bullet^*}\subset \Hom_{\cK_2(\cP)}(M_\bullet,N_\bullet^*)$ consists of the morphisms whose mapping cones are isomorphic to $L_\bullet^*$ in $\cK_2(\cP)$.
\end{lemma}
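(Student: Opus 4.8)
The plan is to establish the first bijection in a functorial way and then show it respects the refinement to middle-term/mapping-cone data. First I would recall the standard description of $\Ext^1$ in the Frobenius (hence exact) category $\cC_2(\cP)$: a class in $\Ext^1_{\cC_2(\cP)}(M_\bullet,N_\bullet)$ is represented by a conflation $0\to N_\bullet\to L_\bullet\to M_\bullet\to 0$, and two such are equivalent iff the middle terms fit into the usual ladder. Because $\cC_2(\cP)$ is Frobenius with projective-injectives the contractible complexes, its stable category $\cK_2(\cP)$ is triangulated with the shift functor $(-)^*$, and the basic identity of stable categories of Frobenius categories gives $\Ext^1_{\cC_2(\cP)}(M_\bullet,N_\bullet)\cong\underline{\Hom}(M_\bullet,N_\bullet^*)=\Hom_{\cK_2(\cP)}(M_\bullet,N_\bullet^*)$, where the correspondence sends a conflation to the connecting morphism $M_\bullet\to N_\bullet^*$ in the triangle it generates. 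This is classical (Happel), so the first statement is essentially a citation plus an identification of the shift in $\cC_2(\cP)$ with $(-)^*$, which is exactly the involution displayed just before the lemma; I would spell this out only enough to fix signs.

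Next I would unwind the refinement. Given a conflation with middle term $L_\bullet$, the triangle in $\cK_2(\cP)$ it produces is $N_\bullet\to L_\bullet\to M_\bullet\xrightarrow{f} N_\bullet^*$, so by the axioms of a triangulated category the third term $L_\bullet$ of the triangle on $f$ is, up to isomorphism in $\cK_2(\cP)$, the mapping cone of $f$ — wait, more precisely, the cone of $f\colon M_\bullet\to N_\bullet^*$ is $L_\bullet^*$ (rotating the triangle $N_\bullet\to L_\bullet\to M_\bullet\xrightarrow{f}N_\bullet^*$ gives $L_\bullet\to M_\bullet\xrightarrow{f}N_\bullet^*\to L_\bullet^*$, whose cone of $f$ is $L_\bullet^*$). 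Hence the bijection of the first part carries the subset $\Ext^1_{\cC_2(\cP)}(M_\bullet,N_\bullet)_{L_\bullet}$ into $\Hom_{\cK_2(\cP)}(M_\bullet,N_\bullet^*)_{L_\bullet^*}$. For the reverse inclusion I would start from a morphism $f\colon M_\bullet\to N_\bullet^*$ in $\cK_2(\cP)$ whose cone is isomorphic to $L_\bullet^*$; lifting $f$ to an honest chain map (possible since $\cC_2(\cP)\to\cK_2(\cP)$ is full on objects and morphisms by definition), the standard mapping-cone construction at the level of $\cC_2(\cP)$ produces a conflation whose middle term is a complex isomorphic \emph{in $\cK_2(\cP)$} to $L_\bullet$, and I would need to upgrade this to an isomorphism \emph{in $\cC_2(\cP)$}, which is where the hypothesis $\Ext^1_{\cC_2(\cP)}(M_\bullet,N_\bullet)_{L_\bullet}\neq\varnothing$ enters: it guarantees that the relevant homotopy equivalence class of middle terms actually contains $L_\bullet$ itself as a genuine object, not merely a complex stably isomorphic to it. Concretely, any two objects of $\cC_2(\cP)$ that are isomorphic in $\cK_2(\cP)$ differ by addition of contractible summands, and on a fixed conflation one can absorb or cancel such contractible summands without changing the $\Ext^1$-class; the nonemptiness hypothesis pins down which representative to land on.

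The main obstacle I expect is precisely this last point: translating between "isomorphic in the stable category $\cK_2(\cP)$'' and "isomorphic in $\cC_2(\cP)$'' for the middle term. Since $L_\bullet\cong L'_\bullet$ in $\cK_2(\cP)$ only means $L_\bullet\oplus C\cong L'_\bullet\oplus C'$ in $\cC_2(\cP)$ for contractible $C,C'$, I would need a lemma (likely already implicit in \cite{Bridgeland-2013} or provable by a short argument using that contractibles are projective-injective, so direct summands split off compatibly with conflations) saying that Baer sums and the cone construction are compatible with adding contractible complexes, and that the fiber of $\Ext^1\to\underline{\Hom}$ over a given $f$ consists of exactly those conflations whose middle term lies in a single $\cK_2(\cP)$-isomorphism class. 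Granting that, the refined bijection follows by restricting the first bijection to the preimage of $\Hom_{\cK_2(\cP)}(M_\bullet,N_\bullet^*)_{L_\bullet^*}$ and checking the two-sided inverse; the sign bookkeeping coming from the definition of $(-)^*$ (the $-d^j$ in the shifted differential) and from the cone differential should be routine but worth a sentence to confirm consistency.
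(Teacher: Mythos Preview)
Your approach via the Frobenius structure and Happel's identification $\Ext^1\cong\underline{\Hom}(-,(-)^*)$ is correct for the first bijection and more conceptual than the paper's route. The paper instead works by hand: it splits each $0\to N^j\to L^j\to M^j\to 0$ using projectivity of $M^j$, writes the middle differential in block form
\[
d_L^j=\begin{pmatrix} d_M^j & 0\\ -f^j & d_N^j\end{pmatrix},
\]
checks that $d_L^{j+1}d_L^j=0$ is equivalent to $(f^1,f^0)\in\Hom_{\cC_2(\cP)}(M_\bullet,N_\bullet^*)$, and then verifies by a second matrix computation that $f$ and $g$ define equivalent extensions iff they are homotopic.

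The payoff of the paper's explicit approach is exactly the point you flag as the main obstacle in the refinement. Because the displayed block matrix \emph{is} the differential of $\textrm{Cone}(f)^*$, the middle term of the extension attached to $[f]$ is isomorphic to $\textrm{Cone}(f)^*$ \emph{in $\cC_2(\cP)$}, not merely in $\cK_2(\cP)$; and since homotopic representatives give equivalent extensions, the $\cC_2(\cP)$-isomorphism class of $\textrm{Cone}(f)$ depends only on $[f]$. Thus the paper obtains directly that $\Phi^{-1}([f])\in\Ext^1_{L_\bullet}$ iff $\textrm{Cone}(f)\cong L_\bullet^*$ in $\cC_2(\cP)$, sidestepping the stable-versus-strict upgrade your abstract argument forces you to confront. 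Your proposed fix (cancel contractible summands, invoke nonemptiness) is on the right track but not yet complete: two middle terms that are $\cK_2(\cP)$-isomorphic and share underlying projectives have the same radical part by Krull--Schmidt, but their contractible parts $K_A\oplus K_B^*$ versus $K_{A'}\oplus K_{B'}^*$ a priori only satisfy $A\oplus B\cong A'\oplus B'$, and the nonemptiness hypothesis alone does not obviously force $A\cong A'$, $B\cong B'$. The paper's concrete construction makes this step unnecessary.
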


\subsection{Ringel-Hall algebra}\label{Hall algebra for abelian category}

We refer to \cite{Schiffmann-2012-1} for details.

Let $\bbF_q$ be the finite field of order $q$ and $v_q\in \bbC$ be a fixed square root of $q$. Let $\cA_q$ be a category satisfying the following conditions 
\begin{equation}\label{*}\tag{$*$}
\begin{aligned}
&\textrm{abelian, essentially small, $\bbF_q$-linear, $\Hom$-finite with}\\
&\textrm{finite global dimension and enough projective objects.} 
\end{aligned}
\end{equation}
Let $\mathrm{Iso}(\cA_q)$ be the set of isomorphism classes of objects of $\cA_q$. For any object $M\in \cA_q$, let $[M]\in \mathrm{Iso}(\cA_q)$ be the isomorphism class of $M$. The Ringel-Hall algebra $\cH(\cA_q)$ of $\cA_q$ is the $\bbC$-vector space having a basis $\{u_{[M]}\mid [M]\in \mathrm{Iso}(\cA_q)\}$ together with the multiplication defined by
$$u_{[M]}\diamond u_{[N]}=\sum_{[L]\in \mathrm{Iso}(\cA_q)}g^L_{MN}u_{[L]}\ \textrm{for any}\ [M],[N]\in \mathrm{Iso}(\cA_q),$$
where $g^L_{MN}$ is the number of subobjects $L'$ of $L$ such that $L/L'\cong M,L'\cong N$. It is an associative algebra with the unit $u_{[0]}$. For any object $M\in \cA_q$, let $a_M=|\Aut_{\cA_q}(M)|$. By Riedtmann-Peng's formula (see \cite[II. Section 4]{Ringel-1996}),
$$g^L_{MN}=\frac{|\Ext_{\cA_q}^1(M,N)_L|}{|\Hom_{\cA_q}(M,N)|}\frac{a_L}{a_Ma_N}\ \textrm{for any}\ [M],[N],[L]\in \mathrm{Iso}(\cA_q),$$
where $\Ext_{\cA_q}^1(M,N)_L\subset \Ext_{\cA_q}^1(M,N)$ consists of the equivalence classes of short exact sequences whose middle terms are isomorphic to $L$, the multiplication can be rewritten as
$$(a_Mu_{[M]})\diamond (a_Nu_{[N]})=\sum_{[L]\in \mathrm{Iso}(\cA_q)}\frac{|\Ext_{\cA_q}^1(M,N)_L|}{|\Hom_{\cA_q}(M,N)|}(a_Lu_{[L]})\ \textrm{for any}\ [M],[N]\in \mathrm{Iso}(\cA_q)$$
under another basis $\{a_Mu_{[M]}\mid [M]\in \mathrm{Iso}(\cA_q)\}$. 

Let $K(\cA_q)$ be the Grothendieck group of $\cA_q$, and $\hat{M}\in K(\cA_q)$ be the image of any object $M\in \cA_q$ in the Grothendieck group. The Euler form $\langle-,-\rangle$ of $\cA_q$ is the bilinear form $K(\cA_q)\times K(\cA_q)\rightarrow \bbZ$ induced by 
$$\langle \hat{M},\hat{N}\rangle=\sum_{s\in \bbZ}(-1)^s\dim_{\bbF_q}\Ext^s_{\cA_q}(M,N)\ \textrm{for any}\ M,N\in \cA_q.$$
Note that the summation is finite, since $\cA_q$ has finite global dimension. The twisted Ringel-Hall algebra $\cH^{\tw}(\cA_q)$ of $\cA_q$ is the $\bbC$-vector space $\cH(\cA_q)$ together with a twisted multiplication defined by
$$u_{[M]}*u_{[N]}=v_q^{\langle \hat M,\hat N\rangle}u_{[M]}\diamond u_{[N]}\ \textrm{for any}\ [M],[N]\in \mathrm{Iso}(\cA_q).$$

\subsubsection{Ringel-Hall algebra of quiver representations}\label{Hall algebra for the category of quiver representations}

Let $\cA_q=\rep_{\bbF_q}(\mathbf{Q})$ be the category of finite-dimensional representations of the Dynkin quiver $\mathbf{Q}$ over $\bbF_q$. Then $\cA_q$ satisfies the conditions (\ref{*}). For any $i\in I$, let $S_i\in \cA_q$ be the corresponding simple representation. Let $\cH^{\tw}(\cA_q)$ be the corresponding twisted Ringel-Hall algebra.

\begin{theorem}[{\cite{Ringel-1990}}]\label{Ringel's isomorphism}
There is an algebra isomorphism
\begin{align*}
\rU_{v_q}(\fn^+)\rightarrow \cH^{{\rm{tw}}}(\cA_q), E_i\mapsto u_{[S_i]}.
\end{align*}
\end{theorem}

\subsubsection{Bridgeland's Hall algebra}\label{Hall algebra for the category of two-periodic complexes}

We refer to \cite{Bridgeland-2013} for details. 

Let $\cA_q$ be an abelian category satisfying the conditions (\ref{*}). Note that the category $\cC_2(\cA_q)$ is not of finite global dimension in general, but the original Hall algebra $\cH(\cC_2(\cA_q))$ still makes sense, because it does not involve the twist by the Euler form of $\cC_2(\cA_q)$. Let $\cH(\cC_2(\cP_q))\subset \cH(\cC_2(\cA_q))$ be the subspace spanned by $u_{[M_\bullet]}$ for any $M_\bullet\in \cC_2(\cP_q)$. Then it is a subalgebra, since $\cC_2(\cP_q)\subset \cC_2(\cA_q)$ is an extension closed subcategory. The twisted form $\cH^{\tw}(\cC_2(\cP_q))$ is the $\bbC$-vector space $\cH(\cC_2(\cP_q))$ together with a twisted multiplication defined by
$$u_{[M_\bullet]}*u_{[N_\bullet]}=v_q^{\langle \hat M_0,\hat N_0\rangle+\langle \hat M_1,\hat N_1\rangle}u_{[M_\bullet]}\diamond u_{[N_\bullet]}\ \textrm{for any}\ [M_\bullet],[N_\bullet]\in \mathrm{Iso}(\cC_2(\cP_q)).$$
For any $P\in \cP_q$, let 
$b_{K_P}=a_{K_P}u_{[K_P]},\ b_{K_P^*}=a_{K_P^*}u_{[K_P^*]}\in \cH^{\tw}(\cC_2(\cP_q))$. Then the subset $\{b_{K_P},b_{K_P^*}\mid P\in \cP_q\}$ satisfies the Ore conditions, and so there is a well-defined localization
$$\cD\cH(\cA_q)=\cH^{\tw}(\cC_2(\cP_q))[b_{K_P}^{-1},b_{K_P^*}^{-1}\mid P\in \cP_q].$$
The Bridgeland's Hall algebra $\cD\cH^{\red}(\cA_q)$ of $\cA_q$ is the reduced quotient 
$$\cD\cH^{\red}(\cA_q)=\cD\cH(\cA_q)/\langle b_{K_P}*b_{K_P^*}-1\mid P\in \cP_q\rangle.$$

Now, let $\cA_q=\rep_{\bbF_q}(\mathbf{Q})$. For any $\alpha\in K(\cA_q)$, it can be written as $\alpha=\hat{P}-\hat{Q}$ for some $P,Q\in \cP_q$, and there is a well-defined element $b_\alpha=b_{K_P}*b_{K_Q^*}\in \cD\cH^{\red}(\cA_q)$. For any $M\in \cA_q$, let $0\rightarrow P\xrightarrow{f}Q\xrightarrow{g}M\rightarrow 0$ be the minimal projective resolution of $M$, and let
\begin{align*}
&\xymatrix{C_M=(P \ar@<.5ex>[r]^-{f} &Q)\in \cC_2(\cP_q), \ar@<.5ex>[l]^{0}}\\
E_M=v_q^{\langle \hat P,\hat M\rangle}b_{-\hat{P}}*(a_{C_M}&u_{[C_M]}),\ F_M=v_q^{\langle \hat P,\hat M\rangle}b_{\hat{P}}*(a_{C_M^*}u_{[C_M^*]})\in \cD\cH^{\red}(\cA_q).
\end{align*}

\begin{theorem}[{\cite[Theorem 4.9]{Bridgeland-2013}}]\label{Bridgeland's isomorphism}
There is an algebra isomorphism
\begin{align*}
\rU_{v_q}(\fg)&\rightarrow \cD\cH^{\red}(\cA_q)\\
E_i\mapsto E_{S_i}/(q-1), F_i\mapsto -v_q&F_{S_i}/(q-1), K_i\mapsto b_{\hat S_i}, K_i^{-1}\mapsto b_{-\hat S_i}.
\end{align*}
\end{theorem}

\subsection{Mixed equivariant semisimple complex}\label{Mixed equivariant semisimple complex}

We refer to \cite{Pramod-2021,Bernstein-Lunts-1994,Beilinson-Bernstein-Deligne-1982,Kiehl-Rainer-2001} for details. 

Let $p,l$ be two fixed distinct prime numbers and $q=p^n$ for some positive integer $n$. Let $k=\overline{\bbF}_q$ be the algebraic closure of $\bbF_q$, and $\overline{\bbQ}_l$ be the algebraic closure of the field of $l$-adic numbers. Throughout this paper, all varieties are algebraic over $k$, and all sheaves are $\overline{\bbQ}_l$-sheaves.

Let $X$ be a $k$-variety with a $\bbF_q$-structure, and $G$ be a connected algebraic group with a $\bbF_q$-structure acting on $X$. We denote by
\begin{itemize}
\setlength{\itemindent}{-1cm}
\item $\cD^b(X)$ the bounded derived category of constructible sheaves on $X$;
\item $\cD^{b,ss}(X)$ the subcategory of $\cD^b(X)$ consisting of semisimple complexes;
\item $\cD_{G}^b(X)$ the $G$-equivariant bounded derived category of constructible sheaves on $X$;
\item $\cD_{G}^{b,ss}(X)$ the subcategory of $\cD_{G}^b(X)$ consisting of semisimple complexes;
\item $\cD_{G,m}^b(X)$ the subcategory of $\cD_{G}^b(X)$ consisting of mixed Weil complexes;
\item $\cD_{G,m}^{b,ss}(X)$ the subcategory of $\cD_{G}^b(X)$ consisting of mixed semisimple Weil complexes.
\end{itemize}

For any $n\in \bbZ$, let $[n]$ be the shift functor, and $(\frac{n}{2})$ be the Tate twist if $n$ is even or the square root of the Tate twist if $n$ is odd. For any  morphism $f:X\rightarrow Y$ between varieties, the derived functors of $f^*,f_*,f_!$ are still denoted by $f^*, f_*,f_!$ respectively. 

Let $f:X\rightarrow Y$ be a principal $G$-bundle. By \cite[Theorem 6.5.9, Proposition 6.2.10]{Pramod-2021}, $f^*:\cD^b(Y)\rightarrow \cD^b_{G}(X)$ is an equivalence, and it can be restricted to be an equivalence $f^*:\cD^{b,ss}(Y)\rightarrow \cD^{b,ss}_{G}(X)$. The quasi-inverse $f_{\flat}$ of $f^*$ is called the equivariant descent functor.


\subsection{Lusztig's categorification for $\bfU_v^+$}\label{Lusztig's categorification}

In this subsection, we review Lusztig's categorification for $\bfU_v^+$ and refer to \cite{Lusztig-1991,Lusztig-1993,Schiffmann-2012} for details. 

Let $\mathbf{Q}=(I,H,s,t)$ be the Dynkin quiver. For any $\nu\in \bbN[I]$, we fix a $I$-graded $k$-vector space $V$ of dimension vector $\nu$, and define 
$$\rE_{\nu}=\bigoplus_{h\in H}\Hom_k(V_{s(h)},V_{t(h)}),\ \rG_{\nu}=\prod_{i\in I}\textrm{GL}_k(V_i).$$
Then $\rG_{\nu}$ acts on $\rE_{\nu}$ by $(g.x)_h=g_{t(h)}x_hg_{s(h)}^{-1}$ for any $h\in H$.

For any $x\in \rE_{\nu}$, a $I$-graded subspace $W\subset V$ is called $x$-stable, if $x_h(W_{s(h)})\subset W_{t(h)}$ for any $h\in H$.

Let $\nu,\nu',\nu''\in \bbN [I]$ be such that $\nu=\nu'+\nu''$. We have fixed $I$-graded vector spaces $V,V',V''$ of dimension vectors $\nu,\nu',\nu''$ respectively. For any $x\in \rE_{\nu}$ and $x$-stable $I$-graded subspace $W\subset V$ of dimension vector $\nu''$, let $x_W:W\rightarrow W$ and $x_{V/W}:V/W\rightarrow V/W$ be the restriction and the quotient of $x$ respectively. Furthermore, for any $I$-graded linear isomorphisms $\rho_1:V/W\rightarrow V',\rho_2:W\rightarrow V''$, we define $\rho_{1}.x_{V/W}\in \rE_{\nu'}$ and $\rho_2.x_W\in \rE_{\nu''}$ by 
$$(\rho_{1}.x_{V/W})_h=(\rho_{1})_{t(h)}(x_{V/W})_h(\rho_1)_{s(h)}^{-1},\ (\rho_{2}.x_W)_h=(\rho_{2})_{t(h)}(x_W)_h(\rho_2)_{s(h)}^{-1}$$ 
for any $h\in H$. Let $\rE''$ be the variety of $(x,W)$, where $x\in \rE_{\nu}$ and $W\subset V$ is a $x$-stable $I$-graded subspace of dimension vector $\nu''$. Then $\rG_{\nu}$ acts on it by $g.(x,W)=(g.x, g(W))$. Let $\rE'$ be the variety of $(x,W,\rho_1,\rho_2)$, where $(x,W)\in \rE''$ and $\rho_1:V/W\rightarrow V',\rho_2:W\rightarrow V''$ are $I$-graded linear isomorphisms. Then $\rG_{\nu'}\times \rG_{\nu''}\times \rG_{\nu}$ acts on it by $(g',g'',g).(x,W,\rho_1,\rho_2)=(g.x,g(W),g'\rho_1g^{-1},g''\rho_2g^{-1})$.

Consider the following morphisms
$$\xymatrix{\rE_{\nu'}\times \rE_{\nu''} &\rE' \ar[l]_-{p_1} \ar[r]^-{p_2} &\rE'' \ar[r]^-{p_3} &\rE_{\nu},}$$
where $p_1(x,W,\rho_1,\rho_2)=(\rho_{1}.x_{V/W}, \rho_2.x_W), p_2(x,W,\rho_1,\rho_2)=(x,W), p_3(x,W)=x$. Note that $p_1$ is smooth with connected fibers and $\rG_{\nu'}\times \rG_{\nu''}\times \rG_{\nu}$-equivariant with respect to the trivial action of $\rG_{\nu}$ on $\rE_{\nu'}\times \rE_{\nu''}$, $p_2$ is a principal $\rG_{\nu'}\times \rG_{\nu''}$-bundle and $\rG_{\nu}$-equivariant, $p_3$ is proper and $\rG_{\nu}$-equivariant. 

The induction functor $\Ind^{\nu}_{\nu',\nu''}:\cD^{b}_{\rG_{\nu'}}(\rE_{\nu'})\boxtimes\cD^{b}_{\rG_{\nu''}}(\rE_{\nu''})\rightarrow \cD^{b}_{\rG_\nu}(\rE_\nu)$ is defined by 
$$\Ind^{\nu}_{\nu',\nu''}(A\boxtimes B)=(p_3)_!(p_2)_\flat(p_1)^*(A\boxtimes B)[d_1-d_2](\frac{d_1-d_2}{2})$$
for any $A\in \cD^{b}_{\rG_{\nu'}}(\rE_{\nu'}),B\in \cD^{b}_{\rG_{\nu''}}(\rE_{\nu''})$, where $d_1,d_2$ are the dimensions of the fibers of $p_1,p_2$ respectively.  By \cite{Beilinson-Bernstein-Deligne-1982} (also see \cite[\S 2.4]{Fang-Lan-Xiao-2023}), $\Ind^{\nu}_{\nu',\nu''}$ can be restricted to 
$$\Ind^{\nu}_{\nu',\nu''}:\cD^{b,ss}_{\rG_{\nu'},m}(\rE_{\nu'})\boxtimes\cD^{b,ss}_{\rG_{\nu''},m}(\rE_{\nu''})\rightarrow \cD^{b,ss}_{\rG_\nu,m}(\rE_\nu).$$ 

To define the restriction functor, we fix a $I$-graded subspace $W\subset V$ of dimension vector $\nu''$, and two $I$-graded linear isomorphisms $\rho_1:V/W\rightarrow V', \rho_2:W\rightarrow V''$. Let $\rQ\subset \rG_\nu$ be the stabilizer of $W\subset V$ and $\rU$ be the unipotent radical of $\rQ$. Then there is a canonical isomorphism $\rQ/\rU\cong \rG_{\nu'}\times \rG_{\nu''}$. Let $\rF$ be the closed subvariety of $\rE_{\nu}$ consisting of $x$ such that $W$ is $x$-stable, then $\rQ$ acts on $\rF$ through the embedding $\rQ\hookrightarrow \rG_{\nu}$, and acts on $\rE_{\nu'}\times \rE_{\nu''}$ through the quotient $\rQ\rightarrow \rQ/\rU\cong \rG_{\nu'}\times \rG_{\nu''}$. 

Consider the following morphisms
$$\xymatrix{\rE_{\nu'}\times \rE_{\nu''} &\rF \ar[l]_-{\kappa} \ar[r]^-{\iota} &\rE_{\nu},}$$
where $\kappa(x,W)=(\rho_{1}.x_{V/W}, \rho_2.x_W), \iota(x)=x$. Note that $\kappa$ is a vector bundle and $\rQ$-equivariant, $\iota$ is the inclusion and $\rQ$-equivariant. 

The restriction functor $\Res^{\nu}_{\nu',\nu''}:\cD^b_{\rG_{\nu}}(\rE_{\nu})\rightarrow \cD^b_{\rG_{\nu'}\times \rG_{\nu''}}(\rE_{\nu'}\times \rE_{\nu''})$ is defined by
$$\Res^{\nu}_{\nu',\nu''}(C)=\kappa_!\iota^*(C)[-\langle\nu',\nu''\rangle](-\frac{\langle\nu',\nu''\rangle}{2})$$
for any $C\in \cD^b_{\rG_{\nu}}(\rE_{\nu})$, where $\langle\nu',\nu''\rangle=\sum_{i\in I}\nu'_i\nu''_i-\sum_{h\in H}\nu'_{s(h)}\nu''_{t(h)}$ is the Euler form. By \cite[\S 2.5]{Fang-Lan-Xiao-2023}, $\Res^{\nu}_{\nu',\nu''}$ can be restricted to 
$$\cD^{b,ss}_{\rG_{\nu},m}(\rE_{\nu})\rightarrow\cD^{b,ss}_{\rG_{\nu'}\times \rG_{\nu''},m}(\rE_{\nu'}\times \rE_{\nu''}).$$ 

Let $\cK_{\nu}^+$ be the Grothendieck group of $\cD^{b,ss}_{\rG_\nu}(\rE_\nu)$  and $\cK^+=\bigoplus_{\nu\in \bbN[I]}\cK_{\nu}^+$. We define a $\cZ$-module structure on them by $v.[L]=[L[-1](-\frac{1}{2})]$.

\begin{theorem}[{\cite[Chapter 13]{Lusztig-1993}}]
 All induction functors define a multiplication and all restriction functors define a comultiplication on $\cK^+$ such that $\cK^+$ is isomorphic to $_{\cZ}\bfU_v^+$ as bialgebras.
\end{theorem}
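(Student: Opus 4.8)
The plan is to establish the bialgebra isomorphism $\cK^+\cong\rU^{\cZ}_v(\fn^+)$ by combining the two categorical structures (induction giving multiplication, restriction giving comultiplication) with Ringel--Green's realization of $\rU_v(\fn^+)$ via the twisted Ringel--Hall algebra. First I would recall that for each $i\in I$ the one-element sequence $\bnu=(i)$ gives $\tilde{\cF}_{(i)}=\rE_i=\{0\}$, so that $L_{(i)}=\overline{\bbQ}_l$ placed in the correct degree, and the class $[\overline{\bbQ}_l|_{\rE_i}]\in\cK^+_i$ will be designated the image of $E_i$; more generally $L_{ni}$ for $\bnu=(ni)$ yields the divided power $E_i^{(n)}$. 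I would then verify that these classes generate $\cK^+$ as a $\cZ$-algebra under the multiplication induced by $\Ind$: this follows because every $L_{\bnu}$ is by construction obtained from the simple summands of iterated inductions of the $L_{\nu^l}$, and the associativity of $\Ind$ (from the compatibility of the correspondence diagrams) shows $[L_{\bnu}]=[L_{(\nu^1)}]\cdots[L_{(\nu^m)}]$ up to a power of $v$.

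The core of the argument is the comparison with $\rU_v(\fn^+)$ via sheaf-function correspondence. I would pass to the $\bbF_q$-points: the trace-of-Frobenius map sends a mixed semisimple complex to a $\rG_\nu(\bbF_q)$-invariant function on $\rE_\nu(\bbF_q)$, hence to an element of $\cH^{\tw}_q(\rep_{\bbF_q}(\mathbf{Q}))$, and Lusztig's key computation shows this intertwines $\Ind$ with the twisted Hall multiplication and $\Res$ with Green's comultiplication (the Euler-form twists $d_1-d_2$ and $\langle\nu',\nu''\rangle$ appearing in the functor definitions are exactly engineered to match). Since this holds for all prime powers $q$, and the structure constants of both sides are given by polynomials in $v$ (Hall polynomials on one side, and the fact that the $L_{\bnu}$ have polynomial-count stalks on the other), one deduces that the assignment $E_i\mapsto[\overline{\bbQ}_l|_{\rE_i}]$ extends to a $\cZ$-algebra homomorphism $\rU^{\cZ}_v(\fn^+)\to\cK^+$ — one must check the quantum Serre relations hold in $\cK^+$, which again reduces to the known Serre relations in $\cH^{\tw}_q$ for infinitely many $q$, together with the fact that $\cK^+$ is torsion-free over $\cZ$. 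Injectivity follows because the specialization $v\mapsto v_q$ recovers Ringel's injection, and surjectivity follows from the generation statement above.

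It remains to identify the coalgebra structures and check the bialgebra (Green) compatibility: that $\Res^\nu_{\nu',\nu''}$ and $\Ind$ satisfy Green's formula on $\cK^+$. The cleanest route is to again transport everything to functions, where Green's theorem for $\cH^{\tw}_q$ is already available, and then lift back using that the relevant complexes are pure and the trace map is injective on the relevant lattices for large $q$. One then concludes $\cK^+\cong\rU^{\cZ}_v(\fn^+)$ as bialgebras.

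The main obstacle, I expect, is not any single computation but the bookkeeping needed to make the sheaf-function dictionary rigorous at the integral level: one must know that $\cK^+$ is a free $\cZ$-module with basis given by the simple perverse sheaves in $\cP_\nu$ (so that comparison with the polynomial structure constants determines the algebra), that the trace maps for varying $q$ are jointly faithful enough to detect relations, and that divided powers $E_i^{(n)}$ land in $\cK^+$ and not merely in $\cK^+\otimes_{\cZ}\bbQ(v)$ — the last point is exactly where the complexes $L_{ni}$ (rather than $L_{(i)}^{\otimes n}$) are needed. All of this is carried out in detail in Lusztig's book, and the proof here would consist of assembling these ingredients with the induction/restriction functors recalled above.
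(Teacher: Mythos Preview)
The paper does not prove this theorem: it is stated in the preliminaries (subsection \ref{Lusztig's categorification}) purely as background and attributed to \cite{Lusztig-1993}, Chapter~13, with no argument given. So there is no proof in the paper to compare against.

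As a standalone sketch your proposal is reasonable but not quite Lusztig's route. You lean heavily on the sheaf--function correspondence and specialization at infinitely many $q$ to import the Serre relations and Green's formula from the Hall-algebra side; this is a legitimate strategy (and is in fact the technique this very paper uses later, e.g.\ in Proposition~\ref{Ore_2} and Lemma~\ref{maximal orbit induction}, to transport identities from $\cH^{\tw}_q$ to $\cK^{ss,*}$). Lusztig's own argument in \cite{Lusztig-1993} is more intrinsic: the Serre relations are verified directly at the level of complexes (via explicit analysis of the $L_{\bnu}$ for short sequences), the bialgebra compatibility is a categorical Mackey-type formula relating $\Res\circ\Ind$ to a sum of $\Ind\circ\Res$'s, and the identification with $\rU^{\cZ}_v(\fn^+)$ goes through a dimension count and the signed basis. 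Your approach buys conceptual economy if one already has Ringel--Green in hand; Lusztig's buys independence from finite-field counting and works uniformly over any algebraically closed field. Either is acceptable here since the statement is only quoted.
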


The $\cZ$-algebra $\cK^+$ has a $\cZ$-basis $\cB=\bigsqcup_{\nu\in \bbN[I]}\cB_{\nu}$ formed by $\rG_{\nu}$-equivariant simple perverse sheaves. The image of $\cB$ under the isomorphism $\cK^+\cong \bfU_v^+$ is called the canonical basis of $\bfU_v^+$. Since $\mathbf{Q}$ is a Dynkin quiver, for any $\nu\in \bbN[I]$ we have
$$\cB_{\nu}=\{[\IC(\cO, \overline{\bbQ}_l)(\frac{\dim \cO}{2})]\mid \cO\subset\rE_{\nu}\ \textrm{is a $\rG_{\nu}$-orbit}\}.$$

\section{Induction functor and restriction functor}\label{Induction functor and restriction functor}

Let $k=\overline{\bbF}_q, \cA=\rep_k(\mathbf{Q})$ be the category of finite-dimensional representations of the Dynkin quiver $\mathbf{Q}$ over $k$, and $\cC_2(\cP)$ be the category of two-periodic projective complexes of $\cA$. Since $\cA$ has the Krull-Schmidt property, we can fix a complete set of indecomposable projective objects $\{P_i\mid i\in I\}$ up to isomorphism.

\subsection{Moduli variety of two-periodic projective complexes}\label{Moduli variety of two-periodic projective complexes}

For any object $M_\bullet=(M^1,M^0,d^1,d^0)\in \cC_2(\cP)$, we define its projective dimension vector pair to be $\ue_{M_\bullet}=(e^1,e^0)\in \bbN[I]\times \bbN[I]$ such that $M^j\cong \bigoplus_{i\in I}e^j_iP_i$, where $e^j_iP_i$ is the direct sum of $e^j_i$-copies of $P_i$ for any $j\in \bbZ_2$ and $i\in I$.

\begin{definition}
Let $\ue=(e^1,e^0)\in \bbN[I]\times \bbN[I]$. Then we define an affine variety 
$$\rC_{\ue}=\{(d^1,d^0)\in \Hom_{\cA}(P^1,P^0)\times \Hom_{\cA}(P^0,P^1)\mid d^{j+1}d^j=0\},$$
where $P^j=\bigoplus_{i\in I}e^j_iP_i$ for any $j\in \bbZ_2$.
\end{definition}

It is clear that any $(d^1,d^0)\in \rC_{\ue}$ determines an object  $M_\bullet\in\cC_2(\cP)$ such that $M^j=P^j$ for any $j\in \bbZ_2$. In this case, we write $M_\bullet=(d^1,d^0)\in \rC_{\ue}$.

Recall that in \S \ref{Lusztig's categorification}, for any $\nu\in \bbN[I]$, we define an affine space $\rE_\nu$ together with an algebraic group $\rG_\nu$-action. There is a bijection between the set of isomorphism classes of representations of dimension vector $\nu$ and the set of $\rG_\nu$-orbits. Moreover, for any $x\in \rE_\nu$, the automorphism group $\Aut_{\cA}(V, x)$ of the representation $(V,x)$ is isomorphic to the stabilizer $\Stab(\cO_x)$ of the $\rG_\nu$-orbit $\cO_x$. Thus $\Stab(\cO_x)$ is a connected algebraic group, since it is isomorphic to an open dense subset $\Aut_{\cA}(V, x)$ of the affine space $\End_{\cA}(V,x)$ which consists of invertible elements (see \cite[Lemma 10.1.7]{Pramod-2021}). 

\begin{definition}
Let $x^j\in \rE_{\nu^j}$ be such that $(V^j,x^j)\cong P^j$ for any $j\in \bbZ_2$. Then we define the connected algebraic group 
$$\rG_{\ue}=\Stab(\cO_{x^1})\times \Stab(\cO_{x^0})\cong \Aut_{\cA}(P^1)\times \Aut_{\cA}(P^0)$$
which acts on $\rC_{\ue}$ by $(g^1,g^0).(d^1,d^0)=(g^0d^1(g^1)^{-1},g^1d^0(g^0)^{-1})$.
\end{definition}

By definitions, there is a bijection between the set of isomorphism classes of objects in $\cC_2(\cP)$ of projective dimension vector pair $\ue$ and the set of $\rG_{\ue}$-orbits, denoted by $[M_{\bullet}]\leftrightarrow \cO_{M_{\bullet}}$. Then the isomorphism group $\Aut_{\cC_2(\cP)}(M_{\bullet})$ of $M_{\bullet}$ is isomorphic to the stabilizer $\Stab(\cO_{M_\bullet})$ of $\cO_{M_{\bullet}}$. Thus $\Stab(\cO_{M_\bullet})$ is a connected algebraic group, since it is isomorphic to an open dense subset $\Aut_{\cC_2(\cP)}(M_{\bullet})$ of the affine space $\End_{\cC_2(\cP)}(M_{\bullet})$ consisting of invertible elements. 

\begin{lemma}\label{Finite orbits}
Let $\ue\in \bbN[I]\times \bbN[I]$. Then\\
(a) there are only finitely many $\rG_{\ue}$-orbits in $\rC_{\ue}$.\\
(b) the constant sheaf $\overline{\bbQ}_l$ is the unique $\rG_{\ue}$-equivariant local system on each $\rG_{\ue}$-orbit.
\end{lemma}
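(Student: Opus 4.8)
The plan is to deduce both statements from the analogous facts for the quiver representation varieties $\rE_\nu$, which are already known. First I would observe that the ambient Hom-spaces $\Hom_{\cA}(P^1,P^0)$ and $\Hom_{\cA}(P^0,P^1)$ are finite-dimensional $k$-vector spaces, since $\cA$ is $\Hom$-finite, so $\rC_{\ue}$ is a genuine affine variety. For part (a), the key point is that an object $M_\bullet=(M^1,M^0,d^1,d^0)\in\cC_2(\cP)$ with fixed projective dimension vector pair $\ue$ is determined up to isomorphism by the isomorphism type of its total homology together with finitely many discrete gluing data; more cleanly, one can argue that $\rG_{\ue}$-orbits in $\rC_{\ue}$ correspond bijectively to isomorphism classes of objects of $\cC_2(\cP)$ with projective dimension vector pair $\ue$ (this bijection is already recorded in the text), so it suffices to show there are only finitely many such isomorphism classes. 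Since $\mathbf{Q}$ is Dynkin, $\cA=\rep_k(\mathbf{Q})$ has only finitely many indecomposable objects; by the structure theory of $\cC_2(\cP)$ as a Frobenius category whose stable category $\cK_2(\cP)$ is the root category (the triangulated $2$-periodic orbit category of $\cD^b(\cA)$, whose indecomposables are again finite in number for $\mathbf{Q}$ Dynkin), every indecomposable object of $\cC_2(\cP)$ is either one of the contractible complexes $K_{P_i},K_{P_i}^*$ or lies in a finite list indexed by indecomposables of the root category. Bounding the multiplicity of each indecomposable summand by the total projective dimension $|\ue|$ then gives finiteness of isomorphism classes, hence of orbits.

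For part (b), once (a) is in hand, each orbit $\cO_{M_\bullet}$ is a smooth connected locally closed subvariety of $\rC_{\ue}$, isomorphic as a $\rG_{\ue}$-variety to $\rG_{\ue}/\Stab(\cO_{M_\bullet})$. A $\rG_{\ue}$-equivariant local system on a homogeneous space $\rG_{\ue}/H$ is the same as a representation of the component group $\pi_0(H)$ (more precisely, of $H$ factoring through $\pi_0(H)$, pulled back to a local system). The text has already established that $\Stab(\cO_{M_\bullet})\cong\Aut_{\cC_2(\cP)}(M_\bullet)$ is a \emph{connected} algebraic group, being a dense open subvariety (the invertible locus) of the affine space $\End_{\cC_2(\cP)}(M_\bullet)$. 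Hence $\pi_0(\Stab(\cO_{M_\bullet}))$ is trivial, so the only $\rG_{\ue}$-equivariant local system on $\cO_{M_\bullet}$ is the constant sheaf $\overline{\bbQ}_l$. I would cite \cite[Proposition~6.2.10]{Pramod-2021} (or the equivalent equivariant-descent statement, Proposition~\ref{Lusztig-principal}) for the identification of equivariant local systems on a $\rG_{\ue}$-orbit with local systems on a point, which are all trivial.

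The main obstacle I anticipate is part (a): one must either invoke a concrete classification of indecomposable two-periodic projective complexes over a Dynkin quiver, or else give a self-contained finiteness argument. A clean route is via Lemma \ref{bijection between Ext and Hom} and the homotopy-category description: an object of $\cC_2(\cP)$ up to isomorphism is built from an object of $\cK_2(\cP)$ (which has finitely many iso-classes of indecomposables with bounded projective dimension, since $\cK_2(\cP)$ is the root category of the Dynkin quiver) together with projective-injective (contractible) summands, whose number is again bounded by $|\ue|$. Alternatively one can argue directly that $\rC_{\ue}$ has a finite stratification: the map sending $(d^1,d^0)$ to the dimension vector of $\ker d^1\cap\ker d^0$ and related homological invariants takes finitely many values, and on each stratum $\rG_{\ue}$ acts with finitely many orbits because the remaining data is a torsor-type quotient of a space of isomorphisms. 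Either way, the Krull--Schmidt property of $\cC_2(\cP)$ reduces the problem to bounding indecomposable summands, which is where the Dynkin hypothesis is essential. Part (b) is then essentially formal given the connectedness of stabilizers already proved in the text.
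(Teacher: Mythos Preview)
Your proposal is correct, and for part (b) it is essentially identical to the paper's proof: both argue that $\Stab(\cO_{M_\bullet})\cong\Aut_{\cC_2(\cP)}(M_\bullet)$ is connected (as the invertible locus in the affine space $\End_{\cC_2(\cP)}(M_\bullet)$), whence the only equivariant local system is the constant one; the paper cites \cite[Proposition~6.2.13]{Pramod-2021} rather than 6.2.10, but the content is the same.

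For part (a), you take a genuinely different route. You classify indecomposables of $\cC_2(\cP)$ via the root category $\cK_2(\cP)$ and then bound multiplicities. The paper instead invokes \cite[Lemma~4.3]{Bridgeland-2013} directly, which gives for every $L_\bullet$ an explicit decomposition
\[
L_\bullet\;\cong\;C_M\oplus C_N^*\oplus K_P\oplus K_Q^*,\qquad M=H^0(L_\bullet),\ N=H^1(L_\bullet),
\]
and then observes that for fixed $\ue$ the dimension vectors of $H^0(L_\bullet),H^1(L_\bullet)$ are bounded, so (by Dynkin) only finitely many $M,N$ can occur, and $P,Q$ are then determined. Your root-category argument is more structural and implicitly re-proves this decomposition; it is correct, but it requires knowing that a non-contractible indecomposable of $\cC_2(\cP)$ stays indecomposable in $\cK_2(\cP)$, which is a small extra step. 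The paper's approach is shorter and avoids that subtlety by quoting Bridgeland's lemma outright.
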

\begin{proof}
(a) By \cite[Lemma 4.2]{Bridgeland-2013}, any object $L_{\bullet}\in \cC_2(\cP)$ can be decomposed as $L_\bullet\cong C_M\oplus C_N^*\oplus K_P\oplus K_Q^*$, where $M=H^0(L_{\bullet}), N=H^1(L_{\bullet})$. There are only finitely many dimension vectors of $H^0(L_{\bullet}), H^1(L_{\bullet})$ for various $L_{\bullet}\in \rC_{\ue}$. Since $\mathbf{Q}$ is a Dynkin quiver, there are  finitely many isomorphism classes of representations of a fixed dimension vector, and so there are finitely many isomorphism classes of $L_{\bullet}$, that is, there are only finitely many $\rG_{\ue}$-orbits in $\rC_{\ue}$.\\
(b) By \cite[Proposition 6.2.13]{Pramod-2021}, any $\rG_{\ue}$-equivariant local system on a $\rG_{\ue}$-orbit must be constant, since the stabilizer of this $\rG_{\ue}$-orbit is connected.
\end{proof}

\subsection{Induction functor}\label{Induction functor}

Let $\ue,\ue',\ue''\in \bbN[I]\times \bbN[I]$ be such that $\ue=\ue'+\ue''$. Let $P^j=\bigoplus_{i\in I}e^j_iP_i, P'^j=\bigoplus_{i\in I}e'^j_iP_i,P''^j=\bigoplus_{i\in I}e''^j_iP_i$ so that $P^j=P'^j\oplus P''^j$ for any $j\in \bbZ_2$.

For any $(d^1,d^0)\in \rC_{\ue}$ and direct summand $W^j$ of $P^j$  for any $j\in \bbZ_2$, the pair $(W^1,W^0)$ is said to be $(d^1,d^0)$-stable, if $d^j(W^j)\subset W^{j+1}$ for any $j\in \bbZ_2$. For any $(d^1,d^0)$-stable pair $(W^1,W^0)$ such that $W^j\cong P''^j$, let $d^j_{W^j}:W^j\rightarrow W^{j+1}$ and $d^j_{P^j/W^j}:P^j/W^j\rightarrow P^{j+1}/W^{j+1}$  be the restriction and the quotient of $d^j$ respectively for any $j\in \bbZ_2$. Furthermore, for any isomorphisms $\rho^j_1:P^j/W^j\rightarrow P'^j$ and $\rho^j_2:W^j\rightarrow P''^j$ in $\cA$  for any $j\in \bbZ_2$, we define
\begin{align*}
&\rho_{1}.(d^1,d^0)=(\rho^0_1d^1_{P^1/W^1}(\rho^1_1)^{-1},\rho^1_1d^0_{P^0/W^0}(\rho^0_1)^{-1})\in \rC_{\ue'},\\
&\rho_{2}.(d^1,d^0)=(\rho^0_2d^1_{W^1}(\rho^1_2)^{-1},\rho^1_2d^0_{W^0}(\rho^0_2)^{-1})\in \rC_{\ue''}.
\end{align*}

Let $\rC''$ be the variety of $(d^1,d^0,W^1,W^0)$, where $(d^1,d^0)\in \rC_{\ue}$ and $(W^1,W^0)$ is a $(d^1,d^0)$-stable pair such that $W^j\cong P''^j$  for any $j\in \bbZ_2$. Then the group $\rG_{\ue}$ acts on it by 
$(g^1,g^0).(d^1,d^0,W^1,W^0)=((g^1,g^0).(d^1,d^0),g^1(W^1),g^0(W^0))$. 

Let $\rC'$ be the variety of $(d^1,d^0,W^1,W^0, \rho^1_1,\rho^0_1,\rho^1_2,\rho^0_2)$, where $(d^1,d^0,W^1,W^0)\in \rC''$ and $\rho^j_1:P^j/W^j\rightarrow P'^j, \rho^j_2:W^j\rightarrow P''^j$ are isomorphisms in $\cA$  for any $j\in \bbZ_2$. Then the group $\rG_{\ue'}\times \rG_{\ue''}\times \rG_{\ue}$ acts on it by 
\begin{align*}
&(g'^1,g'^0,g''^1,g''^0,g^1,g^0).(d^1,d^0,W^1,W^0, \rho^1_1,\rho^0_1,\rho^1_2,\rho^0_2)=\\
&((g^1,g^0).(d^1,d^0,W^1,W^0),g'^1\rho^1_1(g^1)^{-1}\!,g'^0\rho^0_1(g^0)^{-1}\!,g''^1\rho^1_2(g^1)^{-1}\!,g''^0\rho^0_2(g^0)^{-1}).
\end{align*}

Consider the following morphisms
Consider the following morphisms
$$\xymatrix{\rC_{\ue'}\times \rC_{\ue''} &\rC' \ar[l]_-{p_1} \ar[r]^-{p_2} &\rC'' \ar[r]^-{p_3} &\rC_{\ue},}$$
where 
\begin{align*}
&p_1(d^1,d^0,W^1,W^0, \rho^1_1,\rho^0_1,\rho^1_2,\rho^0_2)=(\rho_{1}.(d^1,d^0),\rho_{2}.(d^1,d^0)),\\
&p_2(d^1,d^0,W^1,W^0,\rho^1_1,\rho^0_1,\rho^1_2,\rho^0_2)=(d^1,d^0,W^1,W^0),\\
&p_3(d^1,d^0,W^1,W^0)=(d^1,d^0).
\end{align*}
Note that $p_3$ is proper and $\rG_{\ue}$-equivariant, since its fiber at $(d^1,d^0)$ is a closed subset of the product $\prod_{i\in I,j\in \bbZ_2}\Gr(e^j_i,e''^j_i)$ of Grassmannians which is projective, $p_2$ is a principal $\rG_{\ue'}\times \rG_{\ue''}$-bundle and $\rG_{\ue}$-equivariant, and $p_1$ is $\rG_{\ue'}\times \rG_{\ue''}\times \rG_{\ue}$-equivariant with respect to the trivial action of $\rG_{\ue}$ on $\rC_{\ue'}\times \rC_{\ue''}$.

\begin{lemma}\label{fibre of p_1}
The fiber of the morphism $p_1$ at $(M_\bullet,N_\bullet)\in \rC_{\ue'}\times \rC_{\ue''}$ is isomorphic to $$\prod_{i\in I,j\in \bbZ_2}\Gr(e^j_i,e''^j_i)\times \rG_{\ue'}\times \rG_{\ue''}\times \Hom_{\cC_2(\cP)}(M_\bullet,N_\bullet^*).$$
\end{lemma}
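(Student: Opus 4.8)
The plan is to describe a point of $p_1^{-1}(M_\bullet,N_\bullet)$ by three mutually independent pieces of data --- the position of the sub-object pair, the two chosen identifications, and the ``off-diagonal block'' of the differential --- matching the three factors $\prod_{i,j}\Gr(e^j_i,e''^j_i)$, $\rG_{\ue'}\times\rG_{\ue''}$ and $\Hom_{\cC_2(\cP)}(M_\bullet,N_\bullet^*)$ of the statement. First I would fix $(M_\bullet,N_\bullet)\in\rC_{\ue'}\times\rC_{\ue''}$, so that $M^j=P'^j$ and $N^j=P''^j$. Unwinding the definitions, a point of $p_1^{-1}(M_\bullet,N_\bullet)$ is a tuple $(d^1,d^0,W^1,W^0,\rho^1_1,\rho^0_1,\rho^1_2,\rho^0_2)$ in which $(W^1,W^0)$ is a $(d^1,d^0)$-stable pair of direct summands with $W^j\cong P''^j$, the maps $\rho^j_1,\rho^j_2$ are isomorphisms, and $\rho_{1*}(d^1,d^0)=M_\bullet$, $\rho_{2*}(d^1,d^0)=N_\bullet$; these last two equalities say exactly that the induced map $\overline{d^j}$ on $P^j/W^j$ is the transport of $d^j_M$ along $\rho_1$ and that $d^j|_{W^j}$ is the transport of $d^j_N$ along $\rho_2$. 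Sending such a tuple to $(W^1,W^0)$ gives a surjection onto the variety of summand pairs (any summand pair carries a compatible, e.g.\ block-diagonal, differential), and this variety is the factor written $\prod_{i\in I,j\in\bbZ_2}\Gr(e^j_i,e''^j_i)$ in the statement --- cf.\ the discussion of $p_3$ just before the lemma.

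Over a fixed summand pair I would peel off the $\rho$'s: once $W^j$ is given, the isomorphisms $P^j/W^j\xrightarrow{\cong}P'^j$ form a torsor under $\Aut_\cA(P'^j)$ and the isomorphisms $W^j\xrightarrow{\cong}P''^j$ a torsor under $\Aut_\cA(P''^j)$, so together the $\rho^j_1,\rho^j_2$ constitute a $\rG_{\ue'}\times\rG_{\ue''}$-torsor. Finally, for fixed $(W^1,W^0)$ and $\rho$'s, I would determine the admissible $(d^1,d^0)$: choosing a complementary pair $(U^1,U^0)$ with $P^j=U^j\oplus W^j$ and writing $d^j$ in $2\times2$ block form, the $(d^1,d^0)$-stability of $(W^1,W^0)$ forces the component of $d^j$ carrying $W^j$ into $U^{j+1}$ to vanish, the conditions $\rho_{1*}(d)=M_\bullet$, $\rho_{2*}(d)=N_\bullet$ pin the two diagonal blocks down as the fixed transports of $d^j_M,d^j_N$, and the equation $d^{j+1}d^j=0$ defining $\rC_\ue$ becomes, after transporting the one remaining block $U^j\to W^{j+1}$ through the $\rho$'s, precisely the condition that the resulting pair $(f^1,f^0)$ lie in $\Hom_{\cC_2(\cP)}(M_\bullet,N_\bullet^*)$. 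This is the block-lower-triangular computation from the proof of Lemma~\ref{bijection between Ext and Hom} (with $L_\bullet=(d^1,d^0)$ the middle term of the associated short exact sequence $0\to N_\bullet\to L_\bullet\to M_\bullet\to0$), so this set of differentials is an affine space modelled on $\Hom_{\cC_2(\cP)}(M_\bullet,N_\bullet^*)$.

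Putting the three stages together exhibits $p_1^{-1}(M_\bullet,N_\bullet)$ as an iterated fibration over $\prod_{i,j}\Gr(e^j_i,e''^j_i)$: first a $\rG_{\ue'}\times\rG_{\ue''}$-torsor, then an affine bundle with fibre $\Hom_{\cC_2(\cP)}(M_\bullet,N_\bullet^*)$. The main obstacle, and essentially the only real work, is to upgrade this tower to the honest product in the statement: one must make algebraically varying choices over the base --- a section of the $\rG_{\ue'}\times\rG_{\ue''}$-torsor and a complement pair $(U^1,U^0)$ --- so that both bundles trivialise simultaneously and the differential splits canonically into its diagonal part (determined by $(W^1,W^0)$ and the $\rho$'s) and its off-diagonal part (a genuine element of $\Hom_{\cC_2(\cP)}(M_\bullet,N_\bullet^*)$), the needed triviality being read off from the structure of $\prod_{i,j}\Gr(e^j_i,e''^j_i)$ and from the fact that the sub-bundle of off-diagonal blocks is cut out inside a trivial bundle by the linear equations $d^{j+1}d^j=0$. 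Everything else is the bookkeeping recorded above, routine once the block-matrix dictionary of Lemma~\ref{bijection between Ext and Hom} is in hand.
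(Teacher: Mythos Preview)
Your approach is essentially the same as the paper's: decompose a point of the fibre into the summand pair $(W^1,W^0)$, the identifications $\rho^j_1,\rho^j_2$, and the off-diagonal block of the differential, then identify the last with $\Hom_{\cC_2(\cP)}(M_\bullet,N_\bullet^*)$ via the block-matrix computation of Lemma~\ref{bijection between Ext and Hom}. The paper's proof simply asserts that giving $(W^1,W^0,\rho^1_1,\rho^0_1,\rho^1_2,\rho^0_2)$ ``is the same as'' giving an element of $\prod_{i,j}\Gr(e^j_i,e''^j_i)\times\rG_{\ue'}\times\rG_{\ue''}$ and does not explicitly trivialise the tower; you are more careful in flagging the trivialisation of the $\rG_{\ue'}\times\rG_{\ue''}$-torsor and the affine bundle as the genuine content, which the paper leaves implicit (and which, for the applications to point-counting in Section~\ref{Bridgeland's Hall algebra via functions}, is in fact not needed).
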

\begin{proof}
Let $(d^1,d^0,W^1,W^0, \rho^1_1,\rho^0_1,\rho^1_2,\rho^0_2)$ be an element in  the fiber of $p_1$ at $(M_\bullet,N_\bullet)=((d'^1,d'^0),(d''^1,d''^0))\in \rC_{\ue'}\times \rC_{\ue''}$, it is clear that giving $(W^1,W^0,\rho^1_1,\rho^0_1,\rho^1_2,\rho^0_2)$ is the same as giving an element in $$\prod_{i\in I,j\in \bbZ_2}\textrm{Gr}(e^j_i,e''^j_i)\times \rG_{\ue'}\times \rG_{\ue''}.$$
For any fixed $(W^1,W^0,\rho^1_1,\rho^0_1,\rho^1_2,\rho^0_2)$, it remains to find $(d^1,d^0)\in \rC_{\ue}$ such that $(W^1,W^0)$ is $(d^1,d^0)$-stable and $\rho_{1}.(d^1,d^0)=(d'^1,d'^0),\rho_{2}.(d^1,d^0)=(d''^1,d''^0)$. Such $(d^1,d^0)$ are of the form
\begin{equation}\label{cone complex}
\xymatrix@C=6cm{{P^1/W^1\bigoplus W^1} \ar@<.5ex>[r]^-{{d^1=\begin{pmatrix}(\rho^0_1)^{-1}d'^1\rho^1_1 &0\\
(\rho^0_2)^{-1}(-f^1)\rho^1_1 &(\rho^0_2)^{-1}d''^1\rho^1_2\end{pmatrix}}} &{P^0/W^0\bigoplus W^0} \ar@<.5ex>[l]^-{{d^0=\begin{pmatrix}(\rho^1_1)^{-1}d'^0\rho^0_1 &0\\
(\rho^1_2)^{-1}(-f^0)\rho^0_1 &(\rho^1_2)^{-1}d''^0\rho^0_2\end{pmatrix}}}}
\end{equation}
where $f^j\in \Hom_{\cA}(P'^j,P''^{j+1})$, then the condition $d^{j+1}d^j=0$ is equivalent to the condition that $(f^1,f^0)$ satisfies $(\rho^j_2)^{-1}f^{j+1}d'^j\rho^j_1+(\rho^j_2)^{-1}d''^{j+1}f^j\rho^j_1=0$, that is, $f^{j+1}d'^j=-d''^{j+1}f^j$ for any $j\in \bbZ_2$. Hence $(f^1,f^0)\in \Hom_{\cC_2(\cP)}(M_\bullet,N_\bullet^*)$.
\end{proof}
\begin{remark}\label{mapping cone}
The complex {\rm{(\ref{cone complex})}} is isomorphic to ${\rm{Cone}}\,(f^1,f^0)^*$ the shift of the mapping cone of $(f^1,f^0)\in \Hom_{\cC_2(\cP)}(M_\bullet,N_\bullet^*)$.
\end{remark}

For any $A\in \cD^b_{\rG_{\ue'}}(\rC_{\ue'}),B\in \cD^b_{\rG_{\ue''}}(\rC_{\ue''})$, we have $$A\boxtimes B\in \cD^b_{\rG_{\ue'}\times \rG_{\ue''}}(\rC_{\ue'}\times \rC_{\ue''})=\cD^b_{\rG_{\ue'}\times \rG_{\ue''}\times \rG_{\ue}}(\rC_{\ue'}\times \rC_{\ue''}),$$ 
then $(p_1)^*(A\boxtimes B)\in \cD^b_{\rG_{\ue'}\times \rG_{\ue''}\times \rG_{\ue}}(\rC')$ and $(p_2)_\flat(p_1)^*(A\boxtimes B)\in \cD^b_{\rG_{\ue}}(\rC'')$. Finally, we have $(p_3)_!(p_2)_\flat(p_1)^*(A\boxtimes B)\in \cD^b_{\rG_{\ue}}(\rC_{\ue}).$

\begin{definition}
The induction functor $\Ind^{\ue}_{\ue',\ue''}:\cD^b_{\rG_{\ue'}}(\rC_{\ue'})\boxtimes \cD^b_{\rG_{\ue''}}(\rC_{\ue''})\rightarrow \cD^b_{\rG_{\ue}}(\rC_{\ue})$ is defined by
$$\Ind^{\ue}_{\ue',\ue''}(A\boxtimes B)=(p_3)_!(p_2)_\flat(p_1)^*(A\boxtimes B)[-|\ue',\ue''|](-\frac{|\ue',\ue''|}{2})$$
for any $A\in \cD^b_{\rG_{\ue'}}(\rC_{\ue'})$ and $B\in \cD^b_{\rG_{\ue''}}(\rC_{\ue''})$, where 
\begin{align*}
|\ue',\ue''|=\langle P'^1,P''^1\rangle+\langle P'^0,P''^0\rangle
=\dim_k \Hom_{\cA}(P'^1,P''^1)+\dim_k \Hom_{\cA}(P'^0,P''^0).
\end{align*}
\end{definition}

We remark that $\Ind^{\ue}_{\ue',\ue''}(\cD^{b,ss}_{\rG_{\ue'}}(\rC_{\ue'})\boxtimes \cD^{b,ss}_{\rG_{\ue''}}(\rC_{\ue''}))\not\subset\cD^{b,ss}_{\rG_{\ue}}(\rC_{\ue})$, since the morphism $p_1$ is not smooth in general, which is different from Lusztig's induction functor $\Ind^\nu_{\nu',\nu''}$.

\begin{proposition}\label{associativity}
For any $\ue_1,\ue_2,\ue_3\in \bbN[I]\times \bbN[I]$, we have
$$\Ind^{\ue_1+\ue_2+\ue_3}_{\ue_1,\ue_2+\ue_3}(\Id \boxtimes \Ind^{\ue_2+\ue_3}_{\ue_2,\ue_3})\cong\Ind^{\ue_1+\ue_2+\ue_3}_{\ue_1+\ue_2,\ue_3}(\Ind^{\ue_1+\ue_2}_{\ue_1,\ue_2}\boxtimes \Id).$$
\end{proposition}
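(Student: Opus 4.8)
The plan is to prove the associativity of the induction functor by the standard Lusztig-style argument: realize both sides as pushforward-pullback along a single iterated flag-type diagram. First I would set up the ``triple'' variety. Fix $\ue=\ue_1+\ue_2+\ue_3$ and write $P^j=P^j_1\oplus P^j_2\oplus P^j_3$ with $P^j_a\cong\bigoplus_i (e_a)^j_i P_i$. Let $\widetilde{\rC}$ be the variety of tuples $(d^1,d^0,W^1_{\ge 2},W^0_{\ge 2},W^1_3,W^0_3,\rho)$, where $(d^1,d^0)\in\rC_\ue$, the pair $(W^1_{\ge 2},W^0_{\ge 2})$ is $(d^1,d^0)$-stable with $W^j_{\ge 2}\cong P^j_2\oplus P^j_3$, the pair $(W^1_3,W^0_3)\subseteq (W^1_{\ge 2},W^0_{\ge 2})$ is $(d^1,d^0)$-stable with $W^j_3\cong P^j_3$, and $\rho$ is a full collection of $\cA$-isomorphisms $P^j/W^j_{\ge 2}\xrightarrow{\cong}P^j_1$, $W^j_{\ge 2}/W^j_3\xrightarrow{\cong}P^j_2$, $W^j_3\xrightarrow{\cong}P^j_3$. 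One then has the ``subquotient'' maps $q_1:\widetilde{\rC}\to\rC_{\ue_1}\times\rC_{\ue_2}\times\rC_{\ue_3}$ (using all three $\rho$-blocks), $q_2:\widetilde{\rC}\to\rC''_{\text{triple}}$ forgetting $\rho$, and $q_3:\rC''_{\text{triple}}\to\rC_\ue$ forgetting the flag, exactly analogous to $p_1,p_2,p_3$.

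Next I would check that each of the two composites in the statement computes $(q_3)_!(q_2)_\flat(q_1)^*$ up to the same shift and twist. For the left-hand side $\Ind^{\ue}_{\ue_1,\ue_2+\ue_3}(\Id\boxtimes\Ind^{\ue_2+\ue_3}_{\ue_2,\ue_3})$: one factors the two-step flag $(W_{\ge 2}\supseteq W_3)$ as ``first pick $W_{\ge 2}$, then pick $W_3$ inside it''. Concretely, the inner functor $\Ind^{\ue_2+\ue_3}_{\ue_2,\ue_3}$ lives over $\rC_{\ue_2+\ue_3}$, which is glued into the picture via the base-change/smooth-pullback along the $p_1$-type map for the outer induction. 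The key point is a base-change isomorphism: pulling back $(p_2)_\flat(p_1)^*$ for the inner induction along the smooth map recording how $W_3$ sits inside $W_{\ge 2}$ agrees with restricting $(q_2)_\flat(q_1)^*$. One uses proper base change for the proper maps ($p_3$-type), smooth base change / the equivariant descent compatibility of Proposition \ref{Lusztig-principal} for the principal-bundle maps ($p_2$-type), and the fact that $(p_1)^*$ is just a pullback. Composing these, the left side equals $(q_3)_!(q_2)_\flat(q_1)^*[-N_L](-\tfrac{N_L}{2})$ where $N_L=|\ue_1,\ue_2+\ue_3|+|\ue_2,\ue_3|$. The right-hand side is handled symmetrically by factoring the flag as ``first pick $W_3$, then extend to $W_{\ge 2}$''; it equals $(q_3)_!(q_2)_\flat(q_1)^*[-N_R](-\tfrac{N_R}{2})$ with $N_R=|\ue_1+\ue_2,\ue_3|+|\ue_1,\ue_2|$.

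Finally I would observe $N_L=N_R$: since $P^j_{\ge 2}=P^j_2\oplus P^j_3$, additivity of $\Hom_\cA(-,-)$ in the relevant variables gives $\langle P'^1_1,P''^1_{\ge 2}\rangle=\langle P^1_1,P^1_2\rangle+\langle P^1_1,P^1_3\rangle$ and likewise in degree $0$, so $|\ue_1,\ue_2+\ue_3|=|\ue_1,\ue_2|+|\ue_1,\ue_3|$; similarly $|\ue_1+\ue_2,\ue_3|=|\ue_1,\ue_3|+|\ue_2,\ue_3|$, and both sums collapse to $|\ue_1,\ue_2|+|\ue_1,\ue_3|+|\ue_2,\ue_3|$. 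Hence the shifts and twists match and the two functors are isomorphic.

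I expect the main obstacle to be the bookkeeping in the base-change step: verifying carefully that the intermediate variety for, say, the left side — namely the one where $W_{\ge 2}$ is already trivialized by $\rho_1,\rho_{\ge 2}$ and then one runs the inner induction over the $\rC_{\ue_2+\ue_3}$-factor — is canonically isomorphic (as a variety with the appropriate group action) to a fiber-product description of $\widetilde{\rC}$, so that the smooth/proper base change theorems genuinely apply. This is the same diagram-chase as in \cite[Lemma 9.2.3]{Lusztig-1993} / \cite[Proposition 3.1.5]{Lusztig-1991}, and the one new wrinkle here is that the $p_1$-type maps are not smooth (as noted in the remark after the definition of $\Ind$); however, smoothness of $p_1$ is not used in the associativity argument — only smoothness of $p_2$ (a principal bundle) and properness of $p_3$ — so the classical proof goes through verbatim once the triple variety and its three projections are in place.
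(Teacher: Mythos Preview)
Your proposal is correct and follows essentially the same approach as the paper: both reduce to a common triple-flag construction via base change and equivariant descent, together with the bilinearity identity $|\ue_1,\ue_2+\ue_3|+|\ue_2,\ue_3|=|\ue_1+\ue_2,\ue_3|+|\ue_1,\ue_2|$. The only cosmetic difference is that the paper builds the intermediate spaces as explicit fiber products $X,Y$ (carrying an extra trivialization $\rho^j_2:W^j\cong {^2P^j}\oplus{^3P^j}$ coming from the outer induction) together with the two-step flag variety $Z$ playing the role of your $\rC''_{\text{triple}}$, whereas you posit the symmetric triple variety $\widetilde{\rC}$ directly; both routes land on the same functor $(q_3)_!(q_2)_\flat(q_1)^*$ through $Z$, and your observation that smoothness of $p_1$ is nowhere needed is exactly right.
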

\begin{proof}
We denote by $^kP^j=\bigoplus_{i\in I}(e_k)_i^jP_i$ for $k=1,2,3$ and $j\in \bbZ_2$. It is clear that
$$|\ue_2,\ue_3|+|\ue_1,\ue_2+\ue_3|=|\ue_1,\ue_2|+|\ue_1+\ue_2,\ue_3|.$$ 
We only need to prove that 
\begin{equation}
\begin{aligned}\label{associative}
&(p_3)_!(p_2)_\flat(p_1)^*(1\times p_3)_!(1\times p_2)_\flat(1\times p_1)^*\\
\cong &(p_3)_!(p_2)_\flat(p_1)^*(p_3\times 1)_!(p_2\times 1)_\flat(p_1\times 1)^*,
\end{aligned}
\end{equation}
where $1$ is the identity morphism. Consider the following diagram
$$\xymatrix{ &Z \ar[r]^-{r} &\rC''^{\ue_1+\ue_2+\ue_3}_{\ue_1,\ue_2+\ue_3} \ar[r]^-{p_3} &\rC_{\ue_1+\ue_2+\ue_3}\\
Y \ar[d]_-{\hat{p_1}} \ar[r]^-{\tilde{p_2}} \ar@{}[dr]|{\square} &X \ar[u]^-s \ar@{}[ur]|{\square} \ar[r]^-{\tilde{p_3}} \ar[d]_-{\tilde{p_1}} \ar@{}[dr]|{\square} &\rC'^{\ue_1+\ue_2+\ue_3}_{\ue_1,\ue_2+\ue_3} \ar[u]_-{p_2} \ar[d]^-{p_1}\\
{\rC_{\ue_1}\times \rC'^{\ue_2+\ue_3}_{\ue_2,\ue_3}} \ar[r]^-{1\times p_2} \ar[d]_-{1\times p_1} &{\rC_{\ue_1}\times \rC''^{\ue_2+\ue_3}_{\ue_2,\ue_3}} \ar[r]^-{1\times p_3} &\rC_{\ue_1}\times \rC_{\ue_2+\ue_3}\\
\rC_{\ue_1}\times \rC_{\ue_2}\times \rC_{\ue_3},}$$
where 
\begin{align*}
X&=\rC'^{\ue_1+\ue_2+\ue_3}_{\ue_1,\ue_2+\ue_3}\times_{(\rC_{\ue_1}\times \rC_{\ue_2+\ue_3})}(\rC_{\ue_1}\times \rC''^{\ue_2+\ue_3}_{\ue_2,\ue_3}),\\ Y&=X\times_{(\rC_{\ue_1}\times \rC''^{\ue_2+\ue_3}_{\ue_2,\ue_3})}({\rC_{\ue_1}\times \rC'^{\ue_2+\ue_3}_{\ue_2,\ue_3}})
\end{align*}
are fiber products, $\hat{p_1},\tilde{p_1},\tilde{p_2},\tilde{p_3}$ are natural morphisms. Then $X$ consists of 
$$(d^1,d^0,W^1,W^0,U^1,U^0,\rho^1_1,\rho^0_1,\rho^1_2,\rho^0_2),$$ 
where $(d^1,d^0,W^1,W^0,\rho^1_1,\rho^0_1,\rho^1_2,\rho^0_2)\in \rC'^{\ue_1+\ue_2+\ue_3}_{\ue_1,\ue_2+\ue_3}$, $U^j$ is a direct summand of $W^j$ such that $U^j\cong {^3P^j}$ for any $j\in \bbZ_2$, and $(U^1,U^0)$ is $(d^1,d^0)$-stable. And $Y$ consists of $$(d^1,d^0,W^1,W^0,U^1,U^0,\rho^1_1,\rho^0_1,\rho^1_2,\rho^0_2,\rho'^1_1,\rho'^0_1,\rho'^1_2,\rho'^0_2),$$ 
where $(d^1,d^0,W^1,W^0,U^1,U^0,\rho^1_1,\rho^0_1,\rho^1_2,\rho^0_2)\in X$, $\rho'^j_1:W^j/U^j\xrightarrow{\cong} {^2}P^j, \rho'^j_2:U^j\xrightarrow{\cong} {^3P^j}$ are isomorphisms in $\cA$ for any $j\in \bbZ_2$. Let $Z$ be the variety of $(d^1,d^0,W^1,W^0,U^1,U^0)$, where $(d^1,d^0,W^1,W^0)\in \rC''^{\ue_1+\ue_2+\ue_3}_{\ue_1,\ue_2+\ue_3}$ and $U^j$ is a direct summand of $W^j$ such that $U^j\cong {^3P^j}$ for any $j\in \bbZ_2$, and $(U^1,U^0)$ is $(d^1,d^0)$-stable, and let $s:X\rightarrow Z, r:Z\rightarrow \rC''^{\ue_1+\ue_2+\ue_3}_{\ue_1,\ue_2+\ue_3}$ be natural projection, then $s$ is a principal $\rG_{\ue_1}\times \rG_{\ue_2+\ue_3}$-bundle and
$$\xymatrix{X \ar[d]_-s \ar[r]^-{\tilde{p_3}} &\rC'^{\ue_1+\ue_2+\ue_3}_{\ue_1,\ue_2+\ue_3} \ar[d]_-{p_2}\\
Z \ar[r] \ar@{}[ur]|{\square} &\rC''^{\ue_1+\ue_2+\ue_3}_{\ue_1,\ue_2+\ue_3}}$$
becomes a Cartesian square. By base change, the left hand side of (\ref{associative}) is 
\begin{align*}
&(p_3)_!(p_2)_\flat(p_1)^*(1\times p_3)_!(1\times p_2)_\flat(1\times p_1)^*\\
\cong &(p_3)_!(p_2)_\flat(\tilde{p_3})_!(\tilde{p_1})^*(1\times p_2)_\flat(1\times p_1)^*\\
\cong &(p_3)_!r_!s_\flat(\tilde{p_2})_\flat(\hat{p_1})^*(1\times p_1)^*.
\end{align*}
By similar argument, the right hand side of (\ref{associative}) is isomorphic to the same expression $(p_3)_!r_!s_\flat(\tilde{p_2})_\flat(\hat{p_1})^*(1\times p_1)^*$, as desired.
 \end{proof}
 
For $\underline{0}=(0,0)\in \bbN[I]\times \bbN[I]$, the variety $\rC_{\underline{0}}=\{(0,0)\}$ is a single point. By definition, $\Ind^{\ue}_{\ue,\underline{0}}(A\boxtimes \overline{\bbQ}_l|_{\rC_{\underline{0}}})\cong A\cong \Ind^{\ue}_{\underline{0},\ue}(\overline{\bbQ}_l|_{\rC_{\underline{0}}}\boxtimes A)$ for any $A\in \cD^b_{\rG_{\ue}}(\rC_{\ue})$ and $\ue\in \bbN[I]\times \bbN[I]$.

\subsection{Restriction functor}\label{Restriction functor}

Let $\ue,\ue',\ue''\in \bbN[I]\times \bbN[I]$ be such that $\ue=\ue'+\ue''$. Let $P^j=\bigoplus_{i\in I}e^j_iP_i, P'^j=\bigoplus_{i\in I}e'^j_iP_i,P''^j=\bigoplus_{i\in I}e''^j_iP_i$ so that $P^j=P'^j\oplus P''^j$ for any $j\in \bbZ_2$. To define the restriction functor, let $\rho^j_1:P^j/P''^j\xrightarrow{\cong} P'^j$ be the natural isomorphism and $\rho^j_2:P''^j\xrightarrow{1} P''^j$ be the identity morphism.
 
Let $x^j\in \rE_{\nu^j}$ be the element such that $(V^j,x^j)\cong P^j$, $\rQ^j\subset \Stab(\cO_{x^j})\cong \Aut_{\cA}(P^j)$ be the stabilizer of $P''^j\subset P^j$, and $\rU^j$ be the unipotent radical of $\rQ^j$ for any $j\in \bbZ_2$. We denote by $\rQ=\rQ^1\times \rQ^0$ and $\rU=\rU^1\times \rU^0$, then there is a canonical isomorphism $\rQ/\rU\cong \rG_{\ue'}\times \rG_{\ue''}$.

Let $\rF$ be the closed subvariety of $\rC_{\ue}$ consisting of $(d^1,d^0)$ such that $(P''^1,P''^0)$ is $(d^1,d^0)$-stable. Then $\rQ$ acts on $\rF$ via the embedding $\rQ\hookrightarrow \rG_{\ue}$, and acts on $\rC_{\ue'}\times \rC_{\ue''}$ through the quotient $\rQ\rightarrow\rQ/\rU\cong \rG_{\ue'}\times \rG_{\ue''}$.

Consider the following morphisms
$$\xymatrix{\rC_{\ue'}\times \rC_{\ue''} &\rF \ar[r]^-{\iota} \ar[l]_-{\kappa} &\rC_{\ue},}$$
where 
\begin{align*}
&\kappa(d^1,d^0)=(\rho_{1}.(d^1,d^0),\rho_{2}.(d^1,d^0)),\\
&\iota(d^1,d^0)=(d^1,d^0).
\end{align*}
It is clear that $\kappa$ and $\iota$ are $\rQ$-equivariant. By similar argument to Lemma \ref{fibre of p_1}, we have the following lemma.

\begin{lemma}\label{fibre of kappa}
The fiber of the morphism $\kappa$ at $(M_\bullet, N_\bullet)\in \rC_{\ue'}\times \rC_{\ue''}$ is isomorphic to $\Hom_{\cC_2(\cP)}(M_\bullet,N^*_\bullet)$.
\end{lemma}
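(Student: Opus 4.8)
The plan is to compute the fiber of $\kappa$ directly, mirroring the computation in Lemma \ref{fibre of p_1} but with the parametrizing data already fixed. Fix a point $(M_\bullet,N_\bullet)=((d'^1,d'^0),(d''^1,d''^0))\in \rC_{\ue'}\times \rC_{\ue''}$. An element of $\kappa^{-1}(M_\bullet,N_\bullet)$ is a pair $(d^1,d^0)\in \rF$ such that $(P''^1,P''^0)$ is $(d^1,d^0)$-stable and $\rho_{1*}(d^1,d^0)=(d'^1,d'^0)$, $\rho_{2*}(d^1,d^0)=(d''^1,d''^0)$. Since $\rho^j_1$ and $\rho^j_2$ are now the fixed canonical isomorphism and identity (rather than varying over $\rG_{\ue'}\times\rG_{\ue''}$ as in the induction setting), there is no Grassmannian or group factor: the subspace $(P''^1,P''^0)$ and the identifications are all pinned down.

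First I would write out, with respect to the fixed decomposition $P^j = P'^j \oplus P''^j$, the general form of such $(d^j)$ as a block matrix
\[
d^j=\begin{pmatrix} d'^j & 0\\ -f^j & d''^j\end{pmatrix},
\]
where the upper-left block is forced to be $d'^j$ because $\rho_{1*}(d^1,d^0)=(d'^1,d'^0)$ (the quotient by $(P''^1,P''^0)$ recovers $M_\bullet$ under $\rho_1$), the lower-right block is forced to be $d''^j$ because $\rho_{2*}(d^1,d^0)=(d''^1,d''^0)$ (the restriction to $(P''^1,P''^0)$ recovers $N_\bullet$), the upper-right block vanishes because $(P''^1,P''^0)$ is $(d^1,d^0)$-stable, and $f^j\in\Hom_{\cA}(P'^j,P''^{j+1})$ is an arbitrary off-diagonal component. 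Thus the fiber is parametrized precisely by the choice of $(f^1,f^0)$.

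Next I would impose the complex condition $d^{j+1}d^j=0$. Expanding the block product, the diagonal blocks give $d'^{j+1}d'^j=0$ and $d''^{j+1}d''^j=0$, which hold automatically since $M_\bullet, N_\bullet\in\cC_2(\cP)$; the only new constraint comes from the lower-left block, namely $-f^{j+1}d'^j - d''^{j+1}f^j = 0$, i.e. $f^{j+1}d'^j = -d''^{j+1}f^j$ for $j\in\bbZ_2$. This is exactly the condition that $(f^1,f^0)$ be a chain map $M_\bullet\to N_\bullet^*$, recalling from the shift functor $(-)^*$ that $N_\bullet^*$ has differentials $-d''^0, -d''^1$; the sign bookkeeping is identical to that appearing in the proof of Lemma \ref{bijection between Ext and Hom} and in Lemma \ref{fibre of p_1}. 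Hence $(d^1,d^0)\mapsto(f^1,f^0)$ is a bijection $\kappa^{-1}(M_\bullet,N_\bullet)\xrightarrow{\cong}\Hom_{\cC_2(\cP)}(M_\bullet,N_\bullet^*)$, and since the blocks depend linearly (indeed affinely, here linearly) on the $f^j$ this is an isomorphism of varieties.

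The only subtle point — the "main obstacle," though it is mild — is getting the signs and the direction of the arrows consistent: one must check that the off-diagonal entry is naturally $-f^j$ (to match the convention in Lemma \ref{bijection between Ext and Hom}), that conjugating by $\rho^j_2=\mathrm{id}$ and the canonical $\rho^j_1$ does not introduce extra twists, and that the resulting relation $f^{j+1}d'^j=-d''^{j+1}f^j$ really is the chain-map condition into the shifted complex $N_\bullet^*$ rather than into $N_\bullet$. Since the paper already carried out this computation in the more general situation of Lemma \ref{fibre of p_1} (where the fiber additionally contained the Grassmannian and group factors coming from the choices of $(W^1,W^0,\rho^j_1,\rho^j_2)$), the present statement is just the special case in which those choices are rigidified, so I would simply invoke "by the same argument as Lemma \ref{fibre of p_1}" and record the resulting identification.
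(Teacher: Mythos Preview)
Your proof is correct and follows exactly the approach the paper intends: the paper itself gives no separate argument but simply states ``By similar argument as Lemma \ref{fibre of p_1}'', and you have accurately specialized that computation to the case where $(W^1,W^0,\rho^j_1,\rho^j_2)$ are rigidified, leaving only the $\Hom_{\cC_2(\cP)}(M_\bullet,N_\bullet^*)$ factor.
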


For any $C\in \cD^b_{\rG_{\ue}}(\rC_{\ue})$, it can be viewed as an object of $\cD^b_{\rQ}(\rC_{\ue})$ via the forgetful functor $\cD^b_{\rG_{\ue}}(\rC_{\ue})\rightarrow \cD^b_{\rQ}(\rC_{\ue})$, then $\kappa_!\iota^*(C)\in \cD^b_{\rQ}(\rC_{\ue'}\times \rC_{\ue''})\simeq \cD^b_{\rG_{\ue'}\times \rG_{\ue''}}(\rC_{\ue'}\times \rC_{\ue''})$, where the equivalence of categories follows from \cite[Theorem 6.6.16]{Pramod-2021}, since $\rU$ acts trivially on $\rC_{\ue'}\times \rC_{\ue''}$.

\begin{definition}
The restriction functor $\Res^{\ue}_{\ue',\ue''}:\cD^b_{\rG_{\ue}}(\rC_{\ue})\rightarrow \cD^b_{\rG_{\ue'}\times \rG_{\ue''}}(\rC_{\ue'}\times \rC_{\ue''})$ is defined by
$$\Res^{\ue}_{\ue',\ue''}(C)=\kappa_!\iota^*(C)[|\ue',\ue''|](\frac{|\ue',\ue''|}{2})$$
for any $C\in \cD^b_{\rG_{\ue}}(\rC_{\ue})$, where 
$$|\ue',\ue''|=\langle P'^1,P''^1\rangle+\langle P'^0,P''^0\rangle=\dim_k \Hom_{\cA}(P'^1,P''^1)+\dim_k \Hom_{\cA}(P'^0,P''^0).$$
\end{definition}

We will prove that the restriction functor is a hyperbolic localization functor in Proposition \ref{restriction-hyperbolic}, which is similar to Lusztig's restriction functor $\Res^\nu_{\nu',\nu''}$. As a result, it can be restricted to  
$$\cD^{b,ss}_{\rG_{\ue},m}(\rC_{\ue})\rightarrow \cD^{b,ss}_{\rG_{\ue'},m}(\rC_{\ue'})\boxtimes \cD^{b,ss}_{\rG_{\ue''},m}(\rC_{\ue''}).$$

For any $\ue_1,\ue_2,\ue_3\in \bbN[I]\times \bbN[I]$, let $\Res^{\ue_1+\ue_2}_{\ue_1,\ue_2}\times \Id=(\kappa\times 1)_!(\iota\times 1)^*[|\ue_1,\ue_2|](\frac{|\ue_1,\ue_2|}{2})$
be the functor defined by the morphisms
$$\xymatrix{\rC_{\ue_1}\times \rC_{\ue_2}\times \rC_{\ue_3} &\rF^{\ue_1+\ue_2}_{\ue_1,\ue_2}\times \rC_{\ue_3} \ar[l]_-{\kappa\times 1} \ar[r]^-{\iota\times 1} &\rC_{\ue_1+\ue_2}\times \rC_{\ue_3}.}$$
Similarly, let $\Id\times \Res^{\ue_2+\ue_3}_{\ue_2,\ue_3}=(1\times \kappa)_!(1\times \iota)^*[|\ue_2,\ue_3|](\frac{|\ue_2,\ue_3|}{2})$ be the functor defined by the morphisms 
$$\xymatrix{\rC_{\ue_1}\times \rC_{\ue_2}\times \rC_{\ue_3} &\rC_{\ue_1}\times\rF^{\ue_2+\ue_3}_{\ue_2,\ue_3} \ar[l]_-{1\times \kappa} \ar[r]^-{1\times \iota} &\rC_{\ue_1}\times \rC_{\ue_2+\ue_3}.}$$

\begin{proposition}\label{coassociativity}
For any $\ue_1,\ue_2,\ue_3\in \bbN[I]\times \bbN[I]$, we have
$$(\Id\times \Res^{\ue_2+\ue_3}_{\ue_2,\ue_3})\Res^{\ue_1+\ue_2+\ue_3}_{\ue_1,\ue_2+\ue_3}\cong (\Res^{\ue_1+\ue_2}_{\ue_1,\ue_2}\times \Id)\Res^{\ue_1+\ue_2+\ue_3}_{\ue_1+\ue_2,\ue_3}.$$
\end{proposition}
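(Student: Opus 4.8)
The plan is to mimic the proof of Proposition \ref{associativity} (the associativity of induction) by constructing a common variety that computes both sides of the claimed isomorphism of functors, and then invoke base change together with the descent-functor identities. First I would observe that the degree shifts and Tate twists on the two sides agree, since
$$|\ue_1,\ue_2|+|\ue_1+\ue_2,\ue_3|=|\ue_2,\ue_3|+|\ue_1,\ue_2+\ue_3|,$$
so it suffices to prove the isomorphism of the underlying functors built from $\kappa_!$, $\iota^*$ and the forgetful/descent functors. Writing $\rQ^{(1,2)},\rQ^{(2,3)}$ etc.\ for the relevant parabolic subgroups and $\rU$'s for their unipotent radicals, I would set up a single variety $\rF^{\ue}_{\ue_1,\ue_2,\ue_3}$ of data $(d^1,d^0)\in \rC_{\ue}$ for which the pair of summands $(P^{(2,3),1},P^{(2,3),0})$ (with $P^{(2,3),j}={}^2P^j\oplus{}^3P^j$) and the nested pair $({}^3P^1,{}^3P^0)$ are both $(d^1,d^0)$-stable. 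This $\rF^{\ue}_{\ue_1,\ue_2,\ue_3}$ maps by inclusions and by the various $\rho_*$-type maps to the spaces appearing in $\rF^{\ue_1+\ue_2+\ue_3}_{\ue_1+\ue_2,\ue_3}$, $\rF^{\ue_1+\ue_2}_{\ue_1,\ue_2}\times\rC_{\ue_3}$, and to $\rF^{\ue_1+\ue_2+\ue_3}_{\ue_1,\ue_2+\ue_3}$, $\rC_{\ue_1}\times\rF^{\ue_2+\ue_3}_{\ue_2,\ue_3}$.

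The key geometric input is a Cartesian square expressing the compatibility of restriction-to-a-parabolic with restriction-to-a-sub-parabolic. Concretely, for each of the two orders of restriction one gets a factorization of the map $\rF^{\ue}_{\ue_1,\ue_2,\ue_3}\to\rC_{\ue_1}\times\rC_{\ue_2}\times\rC_{\ue_3}$ through an intermediate space, and the point is that the square
\begin{diagram}[size=2em,midshaft]
\rF^{\ue}_{\ue_1,\ue_2,\ue_3} &\rTo &\rF^{\ue_1+\ue_2}_{\ue_1,\ue_2}\times\rC_{\ue_3}\\
\dTo & &\dTo\\
\rF^{\ue_1+\ue_2+\ue_3}_{\ue_1,\ue_2+\ue_3} &\rTo &\rC_{\ue_1}\times\rC_{\ue_2+\ue_3}
\end{diagram}
(and the analogous one for the other order) is Cartesian, so that $\kappa_!\iota^*$ composed in either order is computed by $(\text{proj})_!(\text{incl})^*$ out of the single space $\rF^{\ue}_{\ue_1,\ue_2,\ue_3}$. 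Here I would use Lemma \ref{fibre of kappa} to identify the fibers and check the fiber-product description on points, exactly as Lemma \ref{fibre of p_1} was used in Proposition \ref{associativity}. The bookkeeping with the equivariance groups — passing from $\rG_{\ue}$-equivariant sheaves to $\rQ$-equivariant ones via the forgetful functor and then identifying $\cD^b_{\rQ}\simeq\cD^b_{\rG_{\ue_1}\times\rG_{\ue_2}\times\rG_{\ue_3}}$ via Proposition \ref{sub-quotient-equivariant} since the unipotent radicals act trivially — has to be done on both sides and shown to match; this uses that the two nested parabolics of $\rG_{\ue}$ determined by the flag $P^{(2,3)}\supset{}^3P$ have compatible Levi quotients.

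After these identifications, both $(\Id\times\Res^{\ue_2+\ue_3}_{\ue_2,\ue_3})\Res^{\ue_1+\ue_2+\ue_3}_{\ue_1,\ue_2+\ue_3}$ and $(\Res^{\ue_1+\ue_2}_{\ue_1,\ue_2}\times\Id)\Res^{\ue_1+\ue_2+\ue_3}_{\ue_1+\ue_2,\ue_3}$ become, up to the matching shift and twist, the single functor $(\mu)_!(\lambda)^*$ where $\lambda:\rF^{\ue}_{\ue_1,\ue_2,\ue_3}\hookrightarrow\rC_{\ue}$ is the inclusion and $\mu:\rF^{\ue}_{\ue_1,\ue_2,\ue_3}\to\rC_{\ue_1}\times\rC_{\ue_2}\times\rC_{\ue_3}$ is the simultaneous restriction-and-project map; smooth base change (proper/smooth not being needed, only the Cartesian squares) and the equivalence in Proposition \ref{sub-quotient-equivariant} finish the argument. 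I expect the main obstacle to be organizing the equivariance and the two distinct parabolic descents cleanly — verifying that the intermediate squares are genuinely Cartesian (not just commutative) and that the forgetful/descent identifications on the two sides are literally the same functor, rather than merely isomorphic in some ad hoc way; once the single space $\rF^{\ue}_{\ue_1,\ue_2,\ue_3}$ and its maps are in place, the rest is the same formal manipulation of $(-)_!$ and $(-)^*$ used in Proposition \ref{associativity}.
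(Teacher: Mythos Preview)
Your approach is essentially the same as the paper's: both construct the single ``double-flag'' variety (your $\rF^{\ue}_{\ue_1,\ue_2,\ue_3}$, the paper's $X$) consisting of $(d^1,d^0)\in\rC_{\ue}$ for which both $({}^2P^j\oplus{}^3P^j)$ and $({}^3P^j)$ are stable, and then use base change along Cartesian squares to identify both compositions with the single functor $\mu_!\lambda^*$ out of this space. The paper does not actually invoke Lemma~\ref{fibre of kappa} here---the Cartesian property is checked directly on points---and it treats the equivariance bookkeeping more lightly than you do, but the structure is identical.

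One correction: the Cartesian square you display is not well-formed, since there is no natural map $\rF^{\ue_1+\ue_2}_{\ue_1,\ue_2}\times\rC_{\ue_3}\to\rC_{\ue_1}\times\rC_{\ue_2+\ue_3}$. The two Cartesian squares you actually need are
\[
\begin{array}{ccc}
\rF^{\ue}_{\ue_1,\ue_2,\ue_3} & \longrightarrow & \rF^{\ue_1+\ue_2+\ue_3}_{\ue_1,\ue_2+\ue_3}\\
\big\downarrow & & \big\downarrow{\scriptstyle\kappa}\\
\rC_{\ue_1}\times\rF^{\ue_2+\ue_3}_{\ue_2,\ue_3} & \xrightarrow{\ 1\times\iota\ } & \rC_{\ue_1}\times\rC_{\ue_2+\ue_3}
\end{array}
\qquad\text{and}\qquad
\begin{array}{ccc}
\rF^{\ue}_{\ue_1,\ue_2,\ue_3} & \longrightarrow & \rF^{\ue_1+\ue_2}_{\ue_1,\ue_2}\times\rC_{\ue_3}\\
\big\downarrow & & \big\downarrow{\scriptstyle\iota\times 1}\\
\rF^{\ue_1+\ue_2+\ue_3}_{\ue_1+\ue_2,\ue_3} & \xrightarrow{\ \kappa\ } & \rC_{\ue_1+\ue_2}\times\rC_{\ue_3}
\end{array}
\]
one for each order of restriction; with these in place your argument goes through exactly as you describe.
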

\begin{proof}
We denote by $^kP^j=\bigoplus_{i\in I}(e_k)_i^jP_i$ for $k=1,2,3$ and $j\in \bbZ_2$. It is clear that
$$|\ue_2,\ue_3|+|\ue_1,\ue_2+\ue_3|=|\ue_1,\ue_2|+|\ue_1+\ue_2,\ue_3|.$$ 
We only need to prove that $(1\times \kappa)_!(1\times \iota)^*\kappa_!\iota^*\cong (\kappa\times 1)_!(\iota\times 1)^*\kappa_!\iota^*$.
Consider the following diagram
$$\xymatrix{\rC_{\ue_1+\ue_2}\times \rC_{\ue_3} &\rF^{\ue_1+\ue_2+\ue_3}_{\ue_1+\ue_2,\ue_3} \ar[l]_-{\kappa} \ar[r]^-{\iota} &\rC_{\ue_1+\ue_2+\ue_3}\\
\rF^{\ue_1+\ue_2}_{\ue_1,\ue_2}\times \rC_{\ue_3} \ar[u]^-{\iota\times 1} \ar[d]_-{\kappa\times 1} &X \ar[l]_-{\hat{\kappa}} \ar[u]_-{\hat{\iota}}  \ar[d]^-{\tilde{\kappa}} \ar[r]^-{\tilde{\iota}} \ar@{}[dr]|{\square} \ar@{}[ul]|{\square} &\rF^{\ue_1+\ue_2+\ue_3}_{\ue_1,\ue_2+\ue_3}  \ar[u]_-{\iota} \ar[d]_-{\kappa} \\
\rC_{\ue_1}\times \rC_{\ue_2}\times \rC_{\ue_3} &\rC_{\ue_1}\times \rF^{\ue_2+\ue_3}_{\ue_2,\ue_3} \ar[l]_-{1\times \kappa} \ar[r]^-{1\times \iota} &\rC_{\ue_1}\times \rC_{\ue_2+\ue_3},}$$
where $X=\rF^{\ue_1+\ue_2+\ue_3}_{\ue_1,\ue_2+\ue_3}\times_{(\rC_{\ue_1}\times \rC_{\ue_2+\ue_3})}(\rC_{\ue_1}\times \rF^{\ue_2+\ue_3}_{\ue_2,\ue_3})$ is the fiber product and $\tilde{\kappa},\tilde{\iota}$ are natural morphisms. Note that $X$ is the subvariety of $\rC_{\ue_1+\ue_2+\ue_3}$ consisting of $(d^1,d^0)$ such that $({^2P^1}\bigoplus {^3P^1},{^2P^0}\bigoplus {^3P^0})$ and $({^3P^1},{^3P^0})$ are $(d^1,d^0)$-stable, and so $X$ is a subvariety of $\rF^{\ue_1+\ue_2+\ue_3}_{\ue_1+\ue_2,\ue_3}$. We denote by $\hat{\iota}:X\rightarrow \rF^{\ue_1+\ue_2+\ue_3}_{\ue_1+\ue_2,\ue_3}$ the inclusion, then for any $(d^1,d^0)\in X$, we have $\kappa\hat{\iota}(d^1,d^0)=(\rho_{1}.(d^1,d^0),\rho_{2}.(d^1,d^0))\in \rC_{\ue_1+\ue_2}\times \rC_{\ue_3}$ such that $({^2P^1},{^2P^0})$ is $\rho_{1}.(d^1,d^0)$-stable, and so the image of $\kappa\hat{\iota}$ is contained in the subvariety $\rF^{\ue_1+\ue_2}_{\ue_1,\ue_2}\times \rC_{\ue_3}$ of $\rC_{\ue_1+\ue_2}\times \rC_{\ue_3}$. We denote by $\hat{\kappa}:X\rightarrow \rF^{\ue_1+\ue_2}_{\ue_1,\ue_2}\times \rC_{\ue_3}$ the morphism such that $\kappa\hat{\iota}=(\iota\times 1)\hat{\kappa}$, then by definition, above diagram commutes and 
$$\xymatrix{X \ar[d]_-{\hat{\iota}} \ar[r]^-{\hat{\kappa}} \ar@{}[dr]|{\square} &\rF^{\ue_1+\ue_2}_{\ue_1,\ue_2}\times \rC_{\ue_3} \ar[d]^-{\iota\times 1}\\
\rF^{\ue_1+\ue_2+\ue_3}_{\ue_1+\ue_2,\ue_3} \ar[r]^-{\kappa} &\rC_{\ue_1+\ue_2}\times \rC_{\ue_3}}$$
becomes a Cartesian square. By base change, we have 
\begin{align*}
(1\times \kappa)_!(1\times \iota)^*\kappa_!\iota^*
\cong (1\times \kappa)_!\tilde{\kappa}_!\tilde{\iota}^*\iota^*
=(\kappa\times 1)_!\hat{\kappa}_!\hat{\iota}^*\iota^*
\cong (\kappa\times 1)_!(\iota\times 1)^*\kappa_!\iota^*,
\end{align*}
as desired.
\end{proof}

For $\underline{0}=(0,0)\in \bbN[I]\times \bbN[I]$, the variety $\rC_{\underline{0}}=\{(0,0)\}$ is a single point. We identify $\rC_{\ue}\times \rC_{\underline{0}}\cong \rC_{\ue}\cong\rC_{\underline{0}}\times \rC_{\ue}$, then by definition, it is clear that $\Res^{\ue}_{\ue,\underline{0}}(C)\cong C\cong \Res^{\ue}_{\underline{0},\ue}(C)$ for any $C\in \cD^b_{\rG_{\ue}}(\rC_{\ue})$ and $\ue\in \bbN[I]\times \bbN[I]$.

\section{Construction of Bridgeland's Hall algebra via functions}\label{Bridgeland's Hall algebra via functions}

Let $X$ be a $k$-variety together with a connected algebraic group $G$-action. Suppose $X,G$ have $\bbF_q$-structures, we denote by $\sigma$ their Frobenius maps, $X^{\sigma},G^{\sigma}$ the $\sigma$-fixed points sets of $X,G$ respectively, and $\tilde{\cH}_{G^{\sigma}}(X^{\sigma})$ the $\bbC$-vector space of $G^{\sigma}$-invariant $\bbC$-valued functions on $X^{\sigma}$. For any $G$-equivariant morphism $\varphi:X\rightarrow Y$ which is compatible with $\bbF_q$-structures, there are two linear maps
\begin{align*}
\varphi^*:\tilde{\cH}_{G^{\sigma}}(Y^{\sigma})&\rightarrow \tilde{\cH}_{G^{\sigma}}(X^{\sigma})  &\varphi_!:\tilde{\cH}_{G^{\sigma}}(X^{\sigma})&\rightarrow \tilde{\cH}_{G^{\sigma}}(Y^{\sigma})\\
g&\mapsto (x\mapsto g(\varphi(x))), &f&\mapsto (y\mapsto \sum_{x\in \varphi^{-1}(y)}f(x)).
\end{align*}

For any $\ue\in \bbN[I]\times \bbN[I]$, since $\mathbf{Q}$ is a Dynkin quiver, we can assume that the projective representation $P^j=\bigoplus_{i\in I}e^j_iP_i$ has a $\bbF_q$-rational structure with Frobenius map $\sigma:P^j\rightarrow P^j$, that is, an additive isomorphism satisfying $\sigma(\lambda p)=\lambda^q\sigma(p)$ for any $\lambda \in k$ and $p\in P^j$ for any $j\in \bbZ_2$. Then $\rC_{\ue}$ has a $\bbF_q$-structure with Frobenius map $\sigma:\rC_{\ue}\rightarrow \rC_{\ue}$ such that for any $(d^1,d^0)\in\rC_{\ue}$, the image $\sigma(d^1,d^0)=(\tilde{d}^1,\tilde{d}^0)$ satisfies $\tilde{d}^j(\sigma(p))=\sigma(d^j(p))$,
and $\Aut_{\cA}(P^j)$ has a $\bbF_q$-structure with Frobenius map $\sigma:\Aut_{\cA}(P^j)\rightarrow \Aut_{\cA}(P^j)$ such that for any $g\in \Aut_{\cA}(P^j)$, the image $\sigma(g)$ satisfies $\sigma(g).\sigma(p)=\sigma(g.p)$
for any $j\in \bbZ_2$. Hence $\rG_{\ue}\cong \Aut_{\cA}(P^1)\times \Aut_{\cA}(P^0)$ has a $\bbF_q$-structure with Frobenius map $\sigma:\rG_{\ue}\rightarrow \rG_{\ue}$. The $\rG_{\ue}$-action on $\rC_{\ue}$ restricts to a $\rG_{\ue}^\sigma$-action on $\rC_{\ue}^\sigma$. Moreover, for any $\ue,\ue',\ue''\in\bbN[I]\times \bbN[I]$ such that $\ue=\ue'+\ue''$, the varieties and morphisms \begin{align*}
\rC_{\ue'}\times \rC_{\ue''}\xleftarrow{p_1}\rC'\xrightarrow{p_2}\rC''\xrightarrow{p_3}\rC_{\ue},\
\rC_{\ue'}\times \rC_{\ue''}\xleftarrow{\kappa} \rF\xrightarrow{\iota}\rC_{\ue},
\end{align*}
appearing in \S \ref{Induction functor} and \S \ref{Restriction functor}, have $\bbF_q$-rational structures. Taking the $\sigma$-fixed points sets, we obtain 
\begin{align*}
\rC^{\sigma}_{\ue'}\times \rC^{\sigma}_{\ue''}\xleftarrow{p_1}\rC'^{\sigma}\xrightarrow{p_2}\rC''^{\sigma}\xrightarrow{p_3}\rC^{\sigma}_{\ue},\
\rC^{\sigma}_{\ue'}\times \rC^{\sigma}_{\ue''}\xleftarrow{\kappa}\rF^{\sigma}\xrightarrow{\iota}\rC^{\sigma}_{\ue},
\end{align*}
where $p_1,p_2,p_3,\kappa,\iota$ are restrictions of the original morphisms denoted by the same notations. 

Since $\rC_{\ue'}^\sigma,\rC_{\ue''}^\sigma$ are finite sets, there is an isomorphism of $\bbC$-vector spaces
\begin{align*}
\tilde{\cH}_{\rG^{\sigma}_{\ue'}}(\rC^{\sigma}_{\ue'})\otimes \tilde{\cH}_{\rG^{\sigma}_{\ue''}}(\rC^{\sigma}_{\ue''})&\xrightarrow{\cong}\tilde{\cH}_{\rG^{\sigma}_{\ue'}\times \rG^{\sigma}_{\ue''}}(\rC^{\sigma}_{\ue'}\times \rC^{\sigma}_{\ue''})\\
f\otimes g&\mapsto ((x',x'')\mapsto f(x')g(x'')).
\end{align*}
For any $f\in \tilde{\cH}_{\rG^{\sigma}_{\ue'}}(\rC^{\sigma}_{\ue'})$ and $g\in \tilde{\cH}_{\rG^{\sigma}_{\ue''}}(\rC^{\sigma}_{\ue''})$, we have 
\begin{align*}
f\otimes g\in &\tilde{\cH}_{\rG^{\sigma}_{\ue'}}(\rC^{\sigma}_{\ue'})\otimes \tilde{\cH}_{\rG^{\sigma}_{\ue''}}(\rC^{\sigma}_{\ue''})\cong\tilde{\cH}_{\rG^{\sigma}_{\ue'}\times \rG^{\sigma}_{\ue''}}(\rC^{\sigma}_{\ue'}\times \rC^{\sigma}_{\ue''})\\
=&\tilde{\cH}_{\rG^{\sigma}_{\ue'}\times \rG^{\sigma}_{\ue''}\times \rG^{\sigma}_{\ue}}(\rC^{\sigma}_{\ue'}\times \rC^{\sigma}_{\ue''})
\end{align*}
and $(p_1)^*(f\otimes g)\in \tilde{\cH}_{\rG^{\sigma}_{\ue'}\times \rG^{\sigma}_{\ue''}\times \rG^{\sigma}_{\ue}}(\rC'^{\sigma})$. Note that $p_2:\rC'^{\sigma}\rightarrow \rC''^{\sigma}$ is a principal $\rG^{\sigma}_{\ue'}\times \rG^{\sigma}_{\ue''}$-bundle, and $(p_1)^*(f\otimes g)$ is constant on $\rG^{\sigma}_{\ue'}\times \rG^{\sigma}_{\ue''}$-orbits, there exists a unique $h\in \tilde{\cH}_{\rG^{\sigma}_{\ue}}(\rC''^{\sigma})$ such that $(p_2)^*(h)=(p_1)^*(f\otimes g)$.
Indeed, we have 
$$h=\frac{1}{|\rG^{\sigma}_{\ue'}\times \rG^{\sigma}_{\ue''}|}(p_2)_!(p_1)^*(f\otimes g).$$
Finally, we can form $(p_3)_!(h)\in \tilde{\cH}_{\rG^{\sigma}_{\ue}}(\rC^{\sigma}_{\ue})$.
Similarly, for any $h\in \tilde{\cH}_{\rG^\sigma_\ue}(\rC^\sigma_{\ue})$, we can form $\kappa_!\iota^*(h)\in  \tilde{\cH}_{\rG^{\sigma}_{\ue'}\times \rG^{\sigma}_{\ue''}}(\rC^{\sigma}_{\ue'}\times \rC^{\sigma}_{\ue''})\cong \tilde{\cH}_{\rG^{\sigma}_{\ue'}}(\rC^{\sigma}_{\ue'})\otimes \tilde{\cH}_{\rG^{\sigma}_{\ue''}}(\rC^{\sigma}_{\ue''})$.

Let $v_q\in \bbC$ be the fixed square root of $q$, which is the same as one in the definition of Ringel-Hall algebra, see \S \ref{Hall algebra for abelian category}.

\begin{definition}
Let  $\ue,\ue',\ue''\in\bbN[I]\times \bbN[I]$ be such that $\ue=\ue'+\ue''$. We define\\
(a) the induction linear map 
\begin{align*}
\ind^{\ue}_{\ue',\ue''}:\tilde{\cH}_{\rG^{\sigma}_{\ue'}}(\rC^{\sigma}_{\ue'})\otimes \tilde{\cH}_{\rG^{\sigma}_{\ue''}}(\rC^{\sigma}_{\ue''})&\rightarrow \tilde{\cH}_{\rG^{\sigma}_{\ue}}(\rC^{\sigma}_{\ue})\\
f\otimes g&\mapsto \frac{v_q^{|\ue',\ue''|}}{|\rG^{\sigma}_{\ue'}\times \rG^{\sigma}_{\ue''}|}(p_3)_!(p_2)_!(p_1)^*(f\otimes g);
\end{align*}
(b) the restriction linear map
\begin{align*}
\res^{\ue}_{\ue',\ue''}:\tilde{\cH}_{\rG^{\sigma}_{\ue}}(\rC^{\sigma}_{\ue})&\rightarrow \tilde{\cH}_{\rG^{\sigma}_{\ue'}}(\rC^{\sigma}_{\ue'})\otimes \tilde{\cH}_{\rG^{\sigma}_{\ue''}}(\rC^{\sigma}_{\ue''})\\
h&\mapsto v_q^{-|\ue',\ue''|}\kappa_!\iota^*(h);
\end{align*}
(c) the direct sum and its graded dual
\begin{align*}
&\tilde{\cH}^{\mathrm{tw}}(\cC_2(\cP_q))=\bigoplus_{\ue}\tilde{\cH}_{\rG^{\sigma}_{\ue}}(\rC^{\sigma}_{\ue}),\\ \tilde{\cH}^{\mathrm{tw},*}(\cC_2(\cP_q))=&\bigoplus_{\ue}\tilde{\cH}^*_{\rG^{\sigma}_{\ue}}(\rC^{\sigma}_{\ue})=\bigoplus_{\ue}\Hom_{\bbC}(\tilde{\cH}_{\rG^{\sigma}_{\ue}}(\rC^{\sigma}_{\ue}),\bbC).
\end{align*}
\end{definition}

By similar argument to Proposition \ref{associativity} and \ref{coassociativity}, we have the following corollary. 

\begin{corollary}
We have\\
(a) all induction maps $\ind^{\ue}_{\ue',\ue''}$ for $\ue=\ue'+\ue''$ define an  associative multiplication 
$$*:\tilde{\cH}^{\mathrm{tw}}(\cC_2(\cP_q))\otimes \tilde{\cH}^{\mathrm{tw}}(\cC_2(\cP_q))\rightarrow \tilde{\cH}^{\mathrm{tw}}(\cC_2(\cP_q));$$ 
(b) all restriction maps $\res^{\ue}_{\ue',\ue''}$ for $\ue=\ue'+\ue''$ define a coassociative comultiplication 
$$r:\tilde{\cH}^{\mathrm{tw}}(\cC_2(\cP_q))\rightarrow \tilde{\cH}^{\mathrm{tw}}(\cC_2(\cP_q))\otimes\tilde{\cH}^{\mathrm{tw}}(\cC_2(\cP_q));$$ 
Dually, the map $r$ induces an associative multiplication
$$*_r:\tilde{\cH}^{\mathrm{tw},*}(\cC_2(\cP_q))\otimes \tilde{\cH}^{\mathrm{tw},*}(\cC_2(\cP_q))\rightarrow \tilde{\cH}^{\mathrm{tw},*}(\cC_2(\cP_q)).$$
\end{corollary}

Recall that $\cC_2(\cP_{q})$ is the category of two-periodic projective complexes of $\cA_q$ over $\bbF_q$ in \S \ref{Hall algebra for the category of two-periodic complexes}.  Any point $(d^1,d^0)\in \rC_{\ue}^\sigma$ determines an object $M_\bullet\in \cC_2(\cP_{q})$ satisfying $\ue_{M_\bullet}=\ue$. In this case, we write $M_\bullet\in \rC_{\ue}^\sigma$, and denote by $\cO_{M_{\bullet}}\subset \rC_{\ue}^\sigma$ the corresponding $\rG_{\ue}^\sigma$-orbit and  $1_{\cO_{M_{\bullet}}}\in \tilde{\cH}_{\rG^{\sigma}_{\ue}}(\rC^{\sigma}_{\ue})$ the corresponding characteristic function. 

There is a bijection between the set of $\rG_{\ue}^\sigma$-orbits in $\rC_{\ue}^\sigma$ and the set of isomorphism classes of objects in $\cC_2(\cP_{q})$ of projective dimension vector pair $\ue$, that is, $M_\bullet,M'_\bullet$ are isomorphic as objects in $\cC_2(\cP_{q})$ if and only if they lie in the same $\rG_{\ue}^\sigma$-orbit.

\begin{proposition}\label{key proposition}
Let  $\ue,\ue',\ue''\in\bbN[I]\times \bbN[I]$ be such that $\ue=\ue'+\ue''$. Then for any $M_\bullet\in \rC_{\ue'}^{\sigma}, N_\bullet\in \rC_{\ue''}^{\sigma}, L_\bullet\in \rC_{\ue}^{\sigma}$, we have 
\begin{align}
&\ind^{\ue}_{\ue',\ue''}(1_{\cO_{M_{\bullet}}}\otimes 1_{\cO_{N_{\bullet}}})(L_{\bullet})=v_q^{|\ue',\ue''|}g^{L_\bullet}_{M_\bullet,N_\bullet}, \label{formula 1}\\
&\res^{\ue}_{\ue',\ue''}(1_{\cO_{L_{\bullet}}})(M_{\bullet},N_{\bullet})=v_q^{|\ue',\ue''|}\frac{|\Ext^1_{\cC_2(\cP_{q})}(M_\bullet, N_\bullet)_{L_\bullet}|}{|\Hom_{\cC_2(\cP_{q})}(M_\bullet, N_\bullet)|}, \label{formula 2}
\end{align}
where $g^{L_\bullet}_{M_\bullet,N_\bullet}$ and $\Ext^1_{\cC_2(\cP_{q})}(M_\bullet, N_\bullet)_{L_\bullet}$ are defined in \S \ref{Hall algebra for abelian category}. 
\end{proposition}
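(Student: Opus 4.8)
The plan is to unwind the definitions of $\ind^{\ue}_{\ue',\ue''}$ and $\res^{\ue}_{\ue',\ue''}$ and, in each case, reduce the evaluation of the left-hand side to a count of $\bbF_q$-rational points of a suitable fibre, then match that number with the Hall datum on the right. Since pushforward of functions is functorial in the evident way, $\ind^{\ue}_{\ue',\ue''}(1_{\cO_{M_\bullet}}\otimes 1_{\cO_{N_\bullet}})(L_\bullet)$ equals $\tfrac{v_q^{|\ue',\ue''|}}{|\rG^\sigma_{\ue'}\times\rG^\sigma_{\ue''}|}$ times the number of $\sigma$-fixed points $(d^1,d^0,W^1,W^0,\rho^1_1,\rho^0_1,\rho^1_2,\rho^0_2)$ of $\rC'$ with $(d^1,d^0)=L_\bullet$ and $(\rho_{1*}(L_\bullet),\rho_{2*}(L_\bullet))\in\cO_{M_\bullet}\times\cO_{N_\bullet}$; while $\res^{\ue}_{\ue',\ue''}(1_{\cO_{L_\bullet}})(M_\bullet,N_\bullet)$ equals $v_q^{-|\ue',\ue''|}$ times the number of $\sigma$-fixed $(d^1,d^0)\in\rF\cap\cO_{L_\bullet}$ with $\rho_{1*}(d^1,d^0)=M_\bullet$ and $\rho_{2*}(d^1,d^0)=N_\bullet$ (here $\rho_1,\rho_2$ being the fixed canonical identifications).

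For the first formula I would argue as follows. Writing $L'_\bullet$ for the subcomplex of $L_\bullet$ cut out by a given $L_\bullet$-stable pair $(W^1,W^0)$, the isomorphisms $\rho_1,\rho_2$ only transport the quotient complex $L_\bullet/L'_\bullet$, respectively the subcomplex $L'_\bullet$, onto $P'^\bullet$, respectively $P''^\bullet$; hence $\rho_{1*}(L_\bullet)\in\cO_{M_\bullet}$ iff $L_\bullet/L'_\bullet\cong M_\bullet$ and $\rho_{2*}(L_\bullet)\in\cO_{N_\bullet}$ iff $L'_\bullet\cong N_\bullet$ in $\cC_2(\cP_{\bbF_q})$, conditions that do not involve $\rho_1,\rho_2$ at all. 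So the fibre count is the product of the number of admissible pairs $(W^1,W^0)$ and the number of tuples $(\rho^1_1,\rho^0_1,\rho^1_2,\rho^0_2)$, and since $P^j/W^j$ and $W^j$ are projective $\bbF_q$-representations of classes $\hat{P'^j}$ and $\hat{P''^j}$—hence isomorphic to $P'^j$ and $P''^j$—the second factor is $|\Aut_{\bbF_q}(P'^1)|\,|\Aut_{\bbF_q}(P'^0)|\,|\Aut_{\bbF_q}(P''^1)|\,|\Aut_{\bbF_q}(P''^0)|=|\rG^\sigma_{\ue'}\times\rG^\sigma_{\ue''}|$. Finally, an admissible $(W^1,W^0)$ is the same datum as a subobject $L'_\bullet\subseteq L_\bullet$ with $L'_\bullet\cong N_\bullet$ and $L_\bullet/L'_\bullet\cong M_\bullet$ (the direct-summand condition in each degree being automatic because $L_\bullet/L'_\bullet\cong M_\bullet$ is degreewise projective), so their number is the filtration number $g^{L_\bullet}_{M_\bullet,N_\bullet}$; dividing by $|\rG^\sigma_{\ue'}\times\rG^\sigma_{\ue''}|$ gives the first formula.

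For the second formula, I would write $(d^1,d^0)$ in block form with respect to $P^j=P'^j\oplus P''^j$ exactly as in the proof of Lemma \ref{bijection between Ext and Hom}: the points of $\rF$ with $\rho_{1*}=M_\bullet$ and $\rho_{2*}=N_\bullet$ are precisely the complexes $d^j=\left(\begin{smallmatrix}d^j_M&0\\-f^j&d^j_N\end{smallmatrix}\right)$ with $(f^1,f^0)\in\Hom_{\cC_2(\cP_{\bbF_q})}(M_\bullet,N_\bullet^*)$ a $2$-cocycle, whose middle complex is $\textrm{Cone}(f^1,f^0)^*$ by Remark \ref{mapping cone}; thus the set to be counted is $\{(f^1,f^0)\mid\textrm{Cone}(f^1,f^0)^*\cong L_\bullet\ \textrm{in}\ \cC_2(\cP_{\bbF_q})\}$. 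This condition is constant on homotopy classes of $(f^1,f^0)$ (equivalent short exact sequences have isomorphic middle terms), and by Lemma \ref{bijection between Ext and Hom} the set of the relevant homotopy classes is $\Hom_{\cK_2(\cP)}(M_\bullet,N_\bullet^*)_{L_\bullet^*}$, of cardinality $|\Ext^1_{\cC_2(\cP_{\bbF_q})}(M_\bullet,N_\bullet)_{L_\bullet}|$, each class containing exactly $|\textrm{Htp}(M_\bullet,N_\bullet^*)|$ elements. To compute this last cardinality I would use the exact sequence of $\bbF_q$-vector spaces
\[
0\to\Hom_{\cC_2(\cP_{\bbF_q})}(M_\bullet,N_\bullet)\to\Hom(M^1,N^1)\oplus\Hom(M^0,N^0)\to\textrm{Htp}(M_\bullet,N_\bullet^*)\to 0
\]
obtained by sending $(s^1,s^0)$ to the null-homotopic map it determines, the kernel being identified with $\Hom_{\cC_2(\cP_{\bbF_q})}(M_\bullet,N_\bullet)$ by a direct comparison with the chain-map equations; as $M^j\cong P'^j$ and $N^j\cong P''^j$ the middle term has order $q^{|\ue',\ue''|}$, so $|\textrm{Htp}(M_\bullet,N_\bullet^*)|=q^{|\ue',\ue''|}/|\Hom_{\cC_2(\cP_{\bbF_q})}(M_\bullet,N_\bullet)|$. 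Hence the fibre count is $|\Ext^1_{\cC_2(\cP_{\bbF_q})}(M_\bullet,N_\bullet)_{L_\bullet}|\,q^{|\ue',\ue''|}/|\Hom_{\cC_2(\cP_{\bbF_q})}(M_\bullet,N_\bullet)|$, and multiplying by $v_q^{-|\ue',\ue''|}$ and using $q=v_q^2$ gives the second formula.

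I expect the routine parts to be the block-matrix bookkeeping and the principal-bundle point count. The step that needs genuine care is the one for the second formula: one must verify that ``$\textrm{Cone}(f)^*\cong L_\bullet$ in $\cC_2(\cP_{\bbF_q})$'' is invariant under homotopy of $(f^1,f^0)$ and that the corresponding set of homotopy classes is exactly the subset $\Hom_{\cK_2(\cP)}(M_\bullet,N_\bullet^*)_{L_\bullet^*}$ to which Lemma \ref{bijection between Ext and Hom} applies, and one must identify the kernel in the displayed exact sequence with $\Hom_{\cC_2(\cP_{\bbF_q})}(M_\bullet,N_\bullet)$ and not with some larger homotopy-type group. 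A minor point to track throughout is that the $\bbF_q$-dimensions of the various $\Hom$-spaces equal the $\dim_k\Hom_\cA$'s appearing in $|\ue',\ue''|$, so that the powers of $q$ and $v_q$ cancel as claimed.
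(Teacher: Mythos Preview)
Your proposal is correct and follows essentially the same approach as the paper's proof: both formulas are obtained by unwinding the definitions into fibre counts, with the induction side reduced via the principal $\rG^\sigma_{\ue'}\times\rG^\sigma_{\ue''}$-bundle to the filtration number, and the restriction side handled by the block-matrix/cone description together with Lemma~\ref{bijection between Ext and Hom} and the exact sequence computing $|\textrm{Htp}(M_\bullet,N_\bullet^*)|$. The points you flag as needing care (homotopy invariance of the cone condition and the identification of the kernel with $\Hom_{\cC_2(\cP_{\bbF_q})}(M_\bullet,N_\bullet)$) are precisely the ones the paper checks, in the same way.
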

\begin{proof}
By definition, we have 
\begin{align*}
&\ind^{\ue}_{\ue',\ue''}(1_{\cO_{M_{\bullet}}}\otimes 1_{\cO_{N_{\bullet}}})(L_{\bullet})
=\frac{v_q^{|\ue',\ue''|}}{|\rG_{\ue'}^{\sigma}\times \rG_{\ue''}^{\sigma}|}(p_3)_!(p_2)_!(p_1)^*(1_{\cO_{M_{\bullet}}}\otimes 1_{\cO_{N_{\bullet}}})(L_{\bullet})\\
=&\frac{v_q^{|\ue',\ue''|}}{|\rG_{\ue'}^{\sigma}\times \rG_{\ue''}^{\sigma}|}\sum_{x\in (p_3p_2)^{-1}(L_{\bullet})}(p_1)^*(1_{\cO_{M_{\bullet}}}\otimes 1_{\cO_{N_{\bullet}}})(x)
=\frac{v_q^{|\ue',\ue''|}}{|\rG_{\ue'}^{\sigma}\times \rG_{\ue''}^{\sigma}|}|S|,
\end{align*}
where $S$ is a subset of $\rC'^\sigma$ consisting of $(d^1,d^0,W^1,W^0,\rho_1^1,\rho_1^0,\rho_2^1,\rho_2^0)$ such that $(d^1,d^0)=L_{\bullet}$ and $\rho_{1}.(d^1,d^0)\in \cO_{M_{\bullet}},\rho_{2}.(d^1,d^0)\in \cO_{N_{\bullet}}$. We denote by $S'$ the set of subobjects $L'_\bullet$ of $L_\bullet$ satisfying $L_\bullet/L'_\bullet\cong M_\bullet,L'_\bullet\cong N_\bullet$, so $|S'|=g^{L_\bullet}_{M_\bullet,N_\bullet}$. It is clear that $S\not=\varnothing$ if and only if $S'\not=\varnothing$. If $S=S'=\varnothing$, then both two sides in the formula (\ref{formula 1}) are zero. Otherwise, note that for any $L'_\bullet\in S'$, since $L^j/L'^j\cong M^j$ is projective, the canonical short exact sequence $0\rightarrow L'^j\rightarrow L^j\rightarrow L^j/L'^j\rightarrow 0$ must split, thus $L'^j$ is a direct summand of $L^j$ for any $j\in \bbZ_2$. Hence
$$(d^1,d^0,W^1,W^0,\rho_1^1,\rho_1^0,\rho_2^1,\rho_2^0) \longmapsto \xymatrix@C=1.5cm{(W^1 \ar@<.5ex>[r]^-{d^1_{W^1}} &W^0) \ar@<.5ex>[l]^-{d^0_{W^0}}}$$
defines a surjection $S\rightarrow S'$ whose fiber is isomorphic to $\rG_{\ue'}^\sigma\times \rG_{\ue''}^\sigma$, and so 
\begin{align*}
\ind^{\ue}_{\ue',\ue''}(1_{\cO_{M_{\bullet}}}\otimes 1_{\cO_{N_{\bullet}}})(L_{\bullet})=&\frac{v_q^{|\ue',\ue''|}}{|\rG_{\ue'}^{\sigma}\times \rG_{\ue''}^{\sigma}|}|\rG_{\ue'}^{\sigma}\times \rG_{\ue''}^{\sigma}|g^{L_\bullet}_{M_\bullet,N_\bullet}
=v_q^{|\ue',\ue''|}g^{L_\bullet}_{M_\bullet,N_\bullet}.
\end{align*}

By definition, we have 
\begin{align*}
\res^{\ue}_{\ue',\ue''}(1_{\cO_{L_{\bullet}}})(M_{\bullet},N_{\bullet})=&v_q^{-|\ue',\ue''|}\kappa_!\iota^*(1_{\cO_{L_{\bullet}}})(M_{\bullet},N_{\bullet})\\
=&v_q^{-|\ue',\ue''|}\sum_{L'_{\bullet}\in \kappa^{-1}(M_{\bullet},N_{\bullet})}\iota^*(1_{\cO_{L_{\bullet}}})(L'_{\bullet})\\
=&v_q^{-|\ue',\ue''|}|\{L'_{\bullet}\in \cO_{L_\bullet}\cap \rF^{\sigma}|\kappa(L'_{\bullet})=(M_{\bullet},N_{\bullet})\}|.
\end{align*}
Note that there exists $L'_{\bullet}\in \cO_{L_\bullet}\cap \rF^{\sigma}$ such that $\kappa(L'_{\bullet})=(M_{\bullet},N_{\bullet})$, if and only if $L'_{\bullet} \cong L_{\bullet}$ and $N_\bullet$ is a subobject of $L'_{\bullet}$ such that $L'_{\bullet}/N_{\bullet}=M_{\bullet}$, if and only if $\Ext^1_{\cC_2(\cP_q)}(M_{\bullet},N_{\bullet})_{L_{\bullet}}\not=\varnothing$. If $\Ext^1_{\cC_2(\cP_q)}(M_{\bullet},N_{\bullet})_{L_{\bullet}}=\varnothing$, then both two sides in the formula (\ref{formula 2}) are zero. Otherwise, by Lemma \ref{fibre of kappa}, the fiber of $\kappa$ at $(M_{\bullet},N_{\bullet})$ is isomorphic to $\Hom_{\cC_2(\cP_q)}(M_\bullet,N_\bullet^*)$, and the element $(f^1,f^0)\in \Hom_{\cC_2(\cP_q)}(M_\bullet,N_\bullet^*)$ corresponds to $\textrm{Cone}(f^1,f^0)^*$, see Remark \ref{mapping cone}. Thus there is a bijection between $\{L'_{\bullet}\in \cO_{L_\bullet}\cap \rF^{\sigma}\mid \kappa(L'_{\bullet})=(M_{\bullet},N_{\bullet})\}$ and 
\begin{align*}
\Hom_{\cC_2(\cP_q)}(M_\bullet,N_\bullet^*)_{L_\bullet^*}=
\{(f^1,f^0)\in \Hom_{\cC_2(\cP_q)}(M_\bullet,N_\bullet^*)\mid \textrm{Cone}(f^1,f^0)\cong L_{\bullet}^*\}.
\end{align*}
Consider the following natural short exact sequence 
$$0\rightarrow \textrm{Htp}(M_\bullet,N_\bullet^*)\rightarrow \Hom_{\cC_2(\cP_q)}(M_\bullet,N_\bullet^*)\xrightarrow{\pi} \Hom_{\cK_2(\cP_q)}(M_\bullet,N_\bullet^*)\rightarrow 0,$$
notice that $\Hom_{\cC_2(\cP_q)}(M_\bullet,N_\bullet^*)_{L_\bullet^*}=\pi^{-1}(\Hom_{\cK_2(\cP_q)}(M_\bullet,N_\bullet^*)_{L^*_\bullet})$, where the subset $\Hom_{\cK_2(\cP_q)}(M_\bullet,N_\bullet^*)_{L^*_\bullet}\subset \Hom_{\cK_2(\cP_q)}(M_\bullet,N_\bullet^*)$ is defined in Lemma \ref{bijection between Ext and Hom}, thus we have 
$$0\rightarrow \textrm{Htp}(M_\bullet,N_\bullet^*)\rightarrow \Hom_{\cC_2(\cP_q)}(M_\bullet,N_\bullet^*)_{L^*_\bullet}\rightarrow \Hom_{\cK_2(\cP_q)}(M_\bullet,N_\bullet^*)_{L^*_\bullet}\rightarrow 0.$$
By above short exact sequence and Lemma \ref{bijection between Ext and Hom}, we obtain
\begin{align*}
\res^{\ue}_{\ue',\ue''}(1_{\cO_{L_{\bullet}}})(M_{\bullet},N_{\bullet})=&v_q^{-|\ue',\ue''|}|\Hom_{\cC_2(\cP_q)}(M_\bullet,N_\bullet^*)_{L_\bullet^*}|\\
=&v_q^{-|\ue',\ue''|}|\textrm{Htp}(M_\bullet,N_\bullet^*)||\Hom_{\cK_2(\cP_q)}(M_\bullet,N_\bullet^*)_{L_\bullet^*}|\\
=&v_q^{-|\ue',\ue''|}|\textrm{Htp}(M_\bullet,N_\bullet^*)||\Ext^1_{\cC_2(\cP_q)}(M_{\bullet},N_{\bullet})_{L_{\bullet}}|.
\end{align*}
By the definition of $\textrm{Htp}(M_\bullet,N_\bullet^*)$, there is a surjection 
\begin{align*}
\Hom&_{\cA_{\bbF_q}}(M^1,N^1)\times \Hom_{\cA_{\bbF_q}}(M^0,N^0)\rightarrow \textrm{Htp}(M_\bullet,N_\bullet^*)\\
&(s^1,s^0)\mapsto (-d_N^1s^1+s^0d_M^1,-d_N^0s^0+s^1d_M^0)
\end{align*}
whose kernel is $\Hom_{\cC_2(\cP_q)}(M_\bullet,N_\bullet)$. Therefore,
\begin{align*}
&\res^{\ue}_{\ue',\ue''}(1_{\cO_{L_{\bullet}}})(M_{\bullet},N_{\bullet})\\=&v_q^{-|\ue',\ue''|}\frac{|\Hom_{\cA_{\bbF_q}}(M^1,N^1)\times \Hom_{\cA_{\bbF_q}}(M^0,N^0)|}{|\Hom_{\cC_2(\cP_q)}(M_\bullet,N_\bullet)|}|\Ext^1_{\cC_2(\cP_q)}(M_\bullet,N_\bullet)_{L_\bullet}|\\
=&v_q^{-|\ue',\ue''|+2|\ue',\ue''|}\frac{|\Ext^1_{\cC_2(\cP_q)}(M_\bullet,N_\bullet)_{L_\bullet}|}{|\Hom_{\cC_2(\cP_q)}(M_\bullet,N_\bullet)|}
=v_q^{|\ue',\ue''|}\frac{|\Ext^1_{\cC_2(\cP_q)}(M_\bullet,N_\bullet)_{L_\bullet}|}{|\Hom_{\cC_2(\cP_q)}(M_\bullet,N_\bullet)|},
\end{align*}
as desired.
\end{proof}

\begin{corollary}\label{function-isomorphism}
There are algebra isomorphisms 
\begin{align*}
\tilde{\cH}^{\mathrm{tw}}(\cC_2(\cP_q))&\xrightarrow{\cong} \cH^{\mathrm{tw}}(\cC_2(\cP_q)) &\tilde{\cH}^{\mathrm{tw},*}(\cC_2(\cP_q))&\xrightarrow{\cong} \cH^{\mathrm{tw}}(\cC_2(\cP_q)) \\
1_{\cO_{M_{\bullet}}}&\mapsto u_{[M_\bullet]}, &1_{\cO_{M_{\bullet}}}^*&\mapsto (a_{M_\bullet}u_{[M_\bullet]}),
\end{align*}
where $\{1_{\cO_{M_{\bullet}}}^*\mid \cO_{M_\bullet}\subset\rC_{\ue}^\sigma\}$ is the dual basis of $\{1_{\cO_{M_{\bullet}}}\mid \cO_{M_\bullet}\subset\rC_{\ue}^\sigma\}$. Hence 
\begin{align*}
\{a_{K_P}1_{\cO_{K_P}},a_{K_P^*}1_{\cO_{K_P^*}}\mid P\in \cP_q\}&\subset \tilde{\cH}^{\mathrm{tw}}(\cC_2(\cP_q)),\\
\{1_{\cO_{K_P}}^*,1_{\cO_{K_P^*}}^*\mid P\in \cP_q\}&\subset \tilde{\cH}^{\mathrm{tw},*}(\cC_2(\cP_q))
\end{align*}
which are the inverse images of $\{b_{K_P},b_{K_P^*}\mid P\in \cP_q\}\subset \cH^{\mathrm{tw}}(\cC_2(\cP_q))$, also satisfy the Ore conditions, and so there are well-defined localizations
\begin{align*} 
\cD\tilde{\cH}(\cA_q)&=\tilde{\cH}^{\mathrm{tw}}(\cC_2(\cP_q))[(a_{K_P}1_{\cO_{K_P}})^{-1},(a_{K_P^*}1_{\cO_{K_P^*}})^{-1}\mid P\in \cP_q]\cong \cD\cH(\cA_q),\\
\cD\tilde{\cH}^*(\cA_q)&=\tilde{\cH}^{\mathrm{tw},*}(\cC_2(\cP_q))[(1_{\cO_{K_P}}^*)^{-1},(1_{\cO_{K_P^*}}^*)^{-1}\mid P\in \cP_q]\cong \cD\cH(\cA_q)
\end{align*}
with the reduced quotients
\begin{align*}
\cD\tilde{\cH}^{\red}(\cA_q)&=\cD\tilde{\cH}(\cA_q)/\langle (a_{K_P}1_{\cO_{K_P}})*(a_{K_P^*}1_{\cO_{K_P^*}})-1\mid P\in \cP_q\rangle\cong\cD\cH^{\red}(\cA_q),\\
\cD\tilde{\cH}^{*,\red}(\cA_q)&=\cD\tilde{\cH}^*(\cA_q)/\langle 1_{\cO_{K_P}}^**_r1_{\cO_{K_P^*}}^*-1\mid P\in \cP_q \rangle \cong \cD\cH^{\red}(\cA_q)
\end{align*}
which are isomorphic to Bridgeland's Hall algebra.
\end{corollary}
\begin{proof}
We prove that $1_{\cO_{M_{\bullet}}}^*\mapsto (a_{M_\bullet}u_{[M_\bullet]})$ defines an algebra homomorphism. For any $\cO_{M_\bullet},\cO_{N_\bullet}$ and $\cO_{L_\bullet}$, by definition and Proposition \ref{key proposition}, we have 
\begin{align*}
&1_{\cO_{M_\bullet}}^**_r1_{\cO_{N_\bullet}}^*(1_{\cO_{L_\bullet}})=(1_{\cO_{M_\bullet}}^*\otimes1_{\cO_{N_\bullet}}^*)(r(1_{\cO_{L_\bullet}}))\\
=&r(1_{\cO_{L_\bullet}})(M_\bullet,N_\bullet)=v_q^{\langle\hat{M^1},\hat{N^1}\rangle+\langle\hat{M^0},\hat{N^0}\rangle}\frac{|\Ext^1_{\cC_2(\cP_q)}(M_\bullet, N_\bullet)_{L_\bullet}|}{|\Hom_{\cC_2(\cP_q)}(M_\bullet, N_\bullet)|},
\end{align*}
and so
\begin{align*}
&1_{\cO_{M_\bullet}}^**_r1_{\cO_{N_\bullet}}^*=\sum_{\cO_{L_\bullet}}(1_{\cO_{M_\bullet}}^**_r1_{\cO_{N_\bullet}}^*(1_{\cO_{L_\bullet}}))1_{\cO_{L_\bullet}}^*\\
=&\sum_{\cO_{L_\bullet}}v_q^{\langle\hat{M^1},\hat{N^1}\rangle+\langle\hat{M^0},\hat{N^0}\rangle}\frac{|\Ext^1_{\cC_2(\cP_q)}(M_\bullet, N_\bullet)_{L_\bullet}|}{|\Hom_{\cC_2(\cP_q)}(M_\bullet, N_\bullet)|}1_{\cO_{L_\bullet}}^*,
\end{align*}
as desired. The other statements are clear.
\end{proof}

Replacing the Frobenius morphism $\sigma$ by its power $\sigma^n$ for $n\geqslant 1$, we can obtain similar results over the finite field $\bbF_{q^n}$. This completes a realization of Bridgeland's Hall algebra via functions.

\section{Constructible sheaves realization of Bridgeland's Hall algebra}\label{Constructible sheaves realization of Bridgeland's Hall algebra}

\subsection{Sheaf-function correspondence}\label{Sheaf-function correspondence}

In this subsection, we first review the sheaf-function correspondence theorem and refer to \cite[\S 5.3]{Pramod-2021} for details. 

We fix an isomorphism $\overline{\bbQ}_l\cong \bbC$. Recall that the category $\cD^b_{G,m}(X)$ consists of mixed objects of the form $(L,\varphi)$, where $L$ is a $G$-equivariant complex and $\varphi:\sigma^*(L)\rightarrow L$ is an isomorphism, where $\sigma:X\rightarrow X$ is the Frobenius morphism. For any $\sigma$-fixed point $x\in X^\sigma$, the isomorphism $\varphi$ induces an isomorphism $\varphi_x:L_x= (\sigma^*(L))_x\rightarrow L_x$, and then for any $s\in \bbZ$, it induces an isomorphism $H^s(\varphi_x):H^s(L_x)\rightarrow H^s(L_x)$ between $\overline{\bbQ}_l$-vector spaces, where $H^s(-)$ is the cohomology functor of complexes. Taking the alternating sum of the traces of these isomorphisms, we obtain 
$$\chi_L(x)=\sum_{s\in \bbZ}(-1)^s\tr (H^s(\varphi_x))\in \overline{\bbQ}_l\cong \bbC.$$
In this way, we obtain a $G^\sigma$-invariant function $\chi_L:X^{\sigma}\rightarrow \bbC$.

\begin{theorem}[{ \cite[\S 5.3]{Pramod-2021}}]\label{sheaf-function correspondence}
Let $X,Y$ be $k$-varieties together with connected algebraic group $G$-actions, suppose they are defined over $\bbF_q$, and $f:X\rightarrow Y$ is a $G$-equivariant morphism which is compatible with $\bbF_q$-structures. Then\\
(a) for any distinguished triangle $L'\rightarrow L\rightarrow L''\rightarrow L'[1]$ in $\cD^b_{G,m}(X)$, we have $\chi_L=\chi_{L'}+\chi_{L''}$;\\
(b) for any $L\in \cD^b_{G,m}(X),L'\in \cD^b_{G,m}(Y)$ and $n\in \bbZ$, we have 
\begin{align*}
&\chi_{L[n]}=(-1)^n\chi_L,\ \chi_{L(\frac{n}{2})}=\sqrt{q}^{-n}\chi_L,\ \chi_{L\boxtimes L'}=\chi_L\otimes \chi_{L'},\\
&\chi_{f_!(L)}=f_!(\chi_L),\ \chi_{f^*(L')}=f^*(\chi_{L'}).
\end{align*}
\end{theorem}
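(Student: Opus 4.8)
The plan is to reduce the whole statement to the Grothendieck--Lefschetz trace formula for constructible $\overline{\bbQ}_l$-complexes, following the standard treatment in \cite[Section 5.3]{Pramod-2021} and \cite[III]{Kiehl-Rainer-2001}. Throughout, observe that the function $\chi_L$ depends only on the stalks of $L$ and the induced Frobenius actions on their cohomology, so we may forget the $G$-equivariant structure when computing $\chi_L$; the resulting function is $G^\sigma$-invariant because the equivariant structure supplies, for each $g\in G^\sigma$ and $x\in X^\sigma$, a Frobenius-compatible isomorphism of stalks $L_x\cong L_{g\cdot x}$.

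For part (a): given a distinguished triangle $L'\to L\to L''\to L'[1]$ and $x\in X^\sigma$, taking stalks yields a distinguished triangle $L'_x\to L_x\to L''_x\to L'_x[1]$ of complexes of $\overline{\bbQ}_l$-vector spaces, compatible with the Frobenius isomorphisms $\varphi',\varphi,\varphi''$. The associated long exact cohomology sequence is then an exact sequence of finite-dimensional $\overline{\bbQ}_l$-vector spaces carrying commuting automorphisms; breaking it at each term into short exact sequences and using additivity of the trace over these, one obtains $\sum_s(-1)^s\tr(H^s(\varphi_x))=\sum_s(-1)^s\tr(H^s(\varphi'_x))+\sum_s(-1)^s\tr(H^s(\varphi''_x))$, i.e. $\chi_L(x)=\chi_{L'}(x)+\chi_{L''}(x)$ for every $x$. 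Hence $\chi_L=\chi_{L'}+\chi_{L''}$, and $\chi_{\_}$ descends to $K_0(\cD^b_{G,m}(X))$.

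For part (b), the elementary identities are immediate from the definitions. The shift formula holds because $H^s((L[n])_x)=H^{s+n}(L_x)$ merely relabels cohomology, producing the sign $(-1)^n$; the twist formula holds because the half Tate twist $(\frac12)$ multiplies the Frobenius action on every cohomology group by $\sqrt q^{-1}$ by definition, so $(\frac n2)$ contributes a factor $\sqrt q^{-n}$. The external product formula follows from the Künneth isomorphism $H^s((L\boxtimes L')_{(x,x')})\cong\bigoplus_{a+b=s}H^a(L_x)\otimes H^b(L'_{x'})$, which intertwines $H^s(\varphi_{(x,x')})$ with $\bigoplus_{a+b=s}H^a(\varphi_x)\otimes H^b(\varphi'_{x'})$, together with $\tr(\alpha\otimes\beta)=\tr(\alpha)\tr(\beta)$ and $(-1)^s=(-1)^a(-1)^b$. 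For the pullback formula, if $x\in X^\sigma$ then $f(x)\in Y^\sigma$ and $(f^*L')_x=L'_{f(x)}$ with Frobenius $\varphi'_{f(x)}$, since $f\circ\sigma_X=\sigma_Y\circ f$; hence $\chi_{f^*L'}(x)=\chi_{L'}(f(x))=(f^*\chi_{L'})(x)$.

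The remaining, and only substantial, identity is $\chi_{f_!L}=f_!\chi_L$, which I would deduce directly from the Grothendieck--Lefschetz trace formula. For $y\in Y^\sigma$, let $X_y=f^{-1}(y)$ with its induced $\bbF_q$-structure and $i_y\colon X_y\hookrightarrow X$ the inclusion; proper base change identifies $(f_!L)_y$ with $R\Gamma_c(X_y,i_y^*L)$ compatibly with the Frobenius structures, so that $H^s(\varphi_{f_!L,y})$ becomes the geometric Frobenius $\Fr$ on $H^s_c(X_y,i_y^*L)$. Thus $\chi_{f_!L}(y)=\sum_s(-1)^s\tr(\Fr\mid H^s_c(X_y,i_y^*L))$, and the trace formula applied to the bounded constructible mixed complex $i_y^*L$ on $X_y$ identifies this with $\sum_{x\in X_y^\sigma}\chi_{i_y^*L}(x)=\sum_{x\in f^{-1}(y)^\sigma}\chi_L(x)=(f_!\chi_L)(y)$. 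This is the step I expect to be the main obstacle: I would invoke the trace formula in its form for bounded constructible $\overline{\bbQ}_l$-complexes (not merely lisse sheaves) as a black box, since a self-contained proof would reproduce a large part of \cite{Kiehl-Rainer-2001}; the only point needing genuine care is verifying that proper base change matches the abstract Frobenius isomorphism $\varphi$ with the geometric Frobenius on fiberwise compactly-supported cohomology, which is precisely where properness of $f$ (equivalently $f_!\cong f_*$ on fibers) and the compatibility of all data with the $\bbF_q$-structures are used.
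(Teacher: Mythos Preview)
The paper does not give its own proof of this theorem; it is stated as a known result with references to \cite[III. Theorem 12.1]{Kiehl-Rainer-2001} and \cite[Section 5.3]{Pramod-2021}. Your sketch is correct and follows precisely the standard argument one finds in those references: the elementary identities (additivity on triangles, shift, Tate twist, external product, pullback) are stalkwise linear-algebra computations, and the only deep input is the Grothendieck--Lefschetz trace formula used, via proper base change, to establish $\chi_{f_!L}=f_!\chi_L$. There is nothing to compare beyond noting that your write-up is exactly the expected unpacking of the cited black box.
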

The trace map is $\chi_{\_}:K_0(\cD^b_{G,m}(X))\rightarrow \tilde{\cH}_{G^\sigma}(X^\sigma)$.

Next we relate \S \ref{Induction functor}, \S \ref{Restriction functor} and \S \ref{Bridgeland's Hall algebra via functions} via sheaf-function correspondence. We take the fixed square root $v_q$ in \S \ref{Bridgeland's Hall algebra via functions} to be $-\sqrt{q}$. 

Since Grothendieck six operators sends mixed complexes to mixed complexes, for any $\ue,\ue',\ue''\in\bbN[I]\times \bbN[I]$ such that $\ue=\ue'+\ue''$, the induction functor $\Ind^{\ue}_{\ue',\ue''}$ and the restriction functor $\Res^{\ue}_{\ue',\ue''}$ can be restricted to
\begin{align*}
&\Ind^{\ue}_{\ue',\ue''}:\cD^b_{\rG_{\ue',m}}(\rC_{\ue'})\boxtimes \cD^b_{\rG_{\ue'',m}}(\rC_{\ue''})\rightarrow \cD^b_{\rG_{\ue,m}}(\rC_{\ue}),\\
&\Res^{\ue}_{\ue',\ue''}:\cD^b_{\rG_{\ue,m}}(\rC_{\ue})\rightarrow \cD^b_{\rG_{\ue'}\times \rG_{\ue''},m}(\rC_{\ue'}\times \rC_{\ue''}).
\end{align*}

\begin{proposition}\label{chi}
Let $\ue,\ue',\ue''\in\bbN[I]\times \bbN[I]$ be such that $\ue=\ue'+\ue''$. Then for any $A\in \cD^b_{\rG_{\ue',m}}(\rC_{\ue'}), B\in \cD^b_{\rG_{\ue'',m}}(\rC_{\ue''})$ and $C\in \cD^b_{\rG_{\ue,m}}(\rC_{\ue})$, we have 
\begin{align*}
\chi_{\Ind^{\ue}_{\ue',\ue''}(A\boxtimes B)}&=\ind^{\ue}_{\ue',\ue''}(\chi_A\otimes \chi_B),\\
\chi_{\Res^{\ue}_{\ue',\ue''}(C)}&=\res^{\ue}_{\ue',\ue''}(\chi_C).
\end{align*}
\end{proposition}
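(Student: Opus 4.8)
The plan is to deduce Proposition~\ref{chi} from the sheaf-function correspondence (Theorem~\ref{sheaf-function correspondence}) by tracking how each of the functors $(-)^*$, $(-)_!$, $[n]$, $(\frac n2)$, $\boxtimes$, and the descent functor $(-)_\flat$ passes through the trace map. Recall that $\Ind^{\ue}_{\ue',\ue''}(A\boxtimes B)=(p_3)_!(p_2)_\flat(p_1)^*(A\boxtimes B)[-|\ue',\ue''|](-\tfrac{|\ue',\ue''|}{2})$, while on the function side $\ind^{\ue}_{\ue',\ue''}(f\otimes g)=\frac{v_q^{|\ue',\ue''|}}{|\rG^\sigma_{\ue'}\times\rG^\sigma_{\ue''}|}(p_3)_!(p_2)_!(p_1)^*(f\otimes g)$. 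The point of the argument is that the two expressions match termwise once we account for the shift/twist (contributing the factor $(-1)^{-|\ue',\ue''|}\sqrt q^{\,|\ue',\ue''|}=(-\sqrt q)^{|\ue',\ue''|}=v_q^{|\ue',\ue''|}$ by our choice $v_q=-\sqrt q$) and for the discrepancy between $(p_2)_\flat$ on sheaves and $(p_2)_!$ on functions.

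The key steps, in order, are as follows. First I would record that $\chi_{A\boxtimes B}=\chi_A\otimes\chi_B$ and $\chi_{(p_1)^*(A\boxtimes B)}=(p_1)^*(\chi_A\otimes\chi_B)$ by parts (b) of Theorem~\ref{sheaf-function correspondence}. Second, the crucial lemma: for a principal $G$-bundle $f:X\to Y$ that is compatible with the $\bbF_q$-structures, and for $L\in\cD^b_{G,m}(X)$, one has $\chi_{f_\flat(L)}=\frac{1}{|G^\sigma|}f_!(\chi_L)$. This follows because $f^*f_\flat(L)\cong L$, so $f^*(\chi_{f_\flat(L)})=\chi_L$ by part (b); since $f^\sigma:X^\sigma\to Y^\sigma$ is surjective with fibers $G^\sigma$ (a principal bundle over a field remains principal on $\sigma$-fixed points, as $H^1(\mathrm{Gal},G)$ vanishes for connected $G$ by Lang's theorem), pulling back is injective and $f_!f^*=|G^\sigma|\cdot\mathrm{id}$, giving the stated formula. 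Applying this to $p_2$ yields $\chi_{(p_2)_\flat(p_1)^*(A\boxtimes B)}=\frac{1}{|\rG^\sigma_{\ue'}\times\rG^\sigma_{\ue''}|}(p_2)_!(p_1)^*(\chi_A\otimes\chi_B)$. Third, apply $\chi_{(p_3)_!(-)}=(p_3)_!\chi_{(-)}$ and finally incorporate the shift and Tate twist via $\chi_{L[n]}=(-1)^n\chi_L$ and $\chi_{L(\frac n2)}=\sqrt q^{\,-n}\chi_L$; with $n=-|\ue',\ue''|$ this produces exactly the factor $v_q^{|\ue',\ue''|}$, matching the definition of $\ind^{\ue}_{\ue',\ue''}$. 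The restriction statement is parallel but simpler: $\Res^{\ue}_{\ue',\ue''}(C)=\kappa_!\iota^*(C)[|\ue',\ue''|](\tfrac{|\ue',\ue''|}{2})$, so $\chi_{\Res^{\ue}_{\ue',\ue''}(C)}=(-1)^{|\ue',\ue''|}\sqrt q^{\,-|\ue',\ue''|}\kappa_!\iota^*(\chi_C)=v_q^{-|\ue',\ue''|}\kappa_!\iota^*(\chi_C)=\res^{\ue}_{\ue',\ue''}(\chi_C)$, using only parts (a) and (b) of Theorem~\ref{sheaf-function correspondence} together with the identification $\chi_{C\boxtimes(-)}$ with the tensor product under the isomorphism $\tilde{\cH}_{\rG^\sigma_{\ue'}}(\rC^\sigma_{\ue'})\otimes\tilde{\cH}_{\rG^\sigma_{\ue''}}(\rC^\sigma_{\ue''})\cong\tilde{\cH}_{\rG^\sigma_{\ue'}\times\rG^\sigma_{\ue''}}(\rC^\sigma_{\ue'}\times\rC^\sigma_{\ue''})$.

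The main obstacle I expect is the descent-functor lemma $\chi_{f_\flat(L)}=\frac{1}{|G^\sigma|}f_!(\chi_L)$, and in particular justifying that a principal $G$-bundle over $k=\oln{\bbF}_q$ with compatible $\bbF_q$-structure restricts on $\sigma$-fixed points to a map with all fibers of size $|G^\sigma|$; this is where connectedness of $G$ (hence of $\rG_{\ue'}$, $\rG_{\ue''}$, which was established in subsection~\ref{Moduli variety of two-periodic projective complexes}) enters decisively via Lang's theorem, guaranteeing that every $\sigma$-stable fiber has an $\bbF_q$-point and that the automorphism action is simply transitive on it. Everything else is a bookkeeping of signs and powers of $\sqrt q$, which our normalization $v_q=-\sqrt q$ is designed to make come out cleanly, so I would present it compactly rather than expanding every term.
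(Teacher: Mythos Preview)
Your proposal is correct and follows essentially the same approach as the paper: both deduce the result from Theorem~\ref{sheaf-function correspondence}, with the only nontrivial step being the identity $\chi_{(p_2)_\flat(L)}=\frac{1}{|\rG_{\ue'}^\sigma\times\rG_{\ue''}^\sigma|}(p_2)_!(\chi_L)$, established via $(p_2)^*(p_2)_\flat\cong\Id$ and the injectivity of $(p_2)^*$ on functions. Your explicit invocation of Lang's theorem to justify that $p_2^\sigma$ has fibers of size $|\rG_{\ue'}^\sigma\times\rG_{\ue''}^\sigma|$ is a detail the paper leaves implicit.
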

\begin{proof}
We apply Theorem \ref{sheaf-function correspondence}. For any $L\in \cD^b_{\rG_{\ue'}\times \rG_{\ue''}\times \rG_{\ue},m}(\rC')$, the function $\chi_{(p_2)_\flat(L)}$ is the unique function such that $(p_2)^*(\chi_{(p_2)_\flat(L)})=\chi_{(p_2)^*(p_2)_\flat(L)}=\chi_L$,
since $(p_2)_\flat$ is the quasi-inverse of $(p_2)^*$. Notice that the function $h=\frac{1}{|\rG_{\ue'}^\sigma\times \rG_{\ue''}^\sigma|}(p_2)_!(\chi_L)$ also satisfies $(p_2)^*(h)=\chi_L$, and so $\chi_{(p_2)_\flat(L)}=\frac{1}{|\rG_{\ue'}^\sigma\times \rG_{\ue''}^\sigma|}(p_2)_!(\chi_L)$. Hence, we have
\begin{align*}
\chi_{\Ind^{\ue}_{\ue',\ue''}(A\boxtimes B)}=&\chi_{(p_3)_!(p_2)_\flat(p_1)^*(A\boxtimes B)[-|\ue',\ue''|](-\frac{|\ue',\ue''|}{2})}\\
=&v_q^{|\ue',\ue''|}(p_3)_!(\frac{1}{|\rG_{\ue'}^\sigma\times \rG_{\ue''}^\sigma|}(p_2)_!(p_1)^*(\chi_A\otimes \chi_B))\\
=&\ind^{\ue}_{\ue',\ue''}(\chi_A\otimes \chi_B),
\end{align*}
as desired. The second formula is clear.
\end{proof}

\subsection{Grothendieck group and localization}\label{Grothendieck group and localization}

Let $\ue\in \bbN[I]\times \bbN[I]$. We define $\cK_{\ue}$ to be the Grothendieck group of $\cD^b_{\rG_{\ue},m}(\rC_{\ue})$ and the direct sum $\cK=\bigoplus_{\ue\in \bbN[I]\times \bbN[I]}\cK_{\ue}$.

For any $\rG_{\ue}$-orbit $\cO_{M_{\bullet}}\subset \rC_{\ue}$, let $j_{M_\bullet}:\cO_{M_{\bullet}}\rightarrow \rC_{\ue}$ be the inclusion. We define $S_{M_{\bullet}}, I_{M_{\bullet}}\in \cK_{\ue}$ to be the images of the standard sheaf and intersection cohomology complex with Tate twists
\begin{align*}
&(j_{M_\bullet})_!(\overline{\bbQ}_l|_{\cO_{M_{\bullet}}})[\dim \cO_{M_\bullet}](\frac{\dim \cO_{M_\bullet}}{2}),\\
\IC_{M_\bullet}=\IC(\cO_{M_{\bullet}},&\overline{\bbQ}_l)(\frac{\dim \cO_{M_{\bullet}}}{2})=(j_{M_\bullet})_{!*}(\overline{\bbQ}_l|_{\cO_{M_{\bullet}}}[\dim \cO_{M_{\bullet}}])(\frac{\dim \cO_{M_{\bullet}}}{2})
\end{align*}
respectively. Then we define the sets
\begin{align*}
\cS_{\ue}&=\{S_{M_{\bullet}}\mid \cO_{M_{\bullet}}\subset \rC_{\ue}\},\ \cS=\bigsqcup_{\ue\in \bbN[I]\times \bbN[I]}\cS_{\ue},\\
\cI_{\ue}&=\{I_{M_{\bullet}}\mid \cO_{M_{\bullet}}\subset \rC_{\ue}\},\ 
\cI=\bigsqcup_{\ue\in \bbN[I]\times \bbN[I]}\cI_{\ue}.
\end{align*}

There is a partial order $\preccurlyeq$ on the set of $\rG_{\ue}$-orbits in $\rC_{\ue}$ by degeneration. More precisely, $\cO_{N_{\bullet}}\preccurlyeq\cO_{M_{\bullet}}$ if and only if $\cO_{N_{\bullet}}\subset \overline{\cO_{M_{\bullet}}}$. 

\begin{proposition}
Let $\bbZ[\overline{\bbQ}_l^*]$ be the group ring of $\overline{\bbQ}_l^*$ over $\bbZ$. Then for any $\ue\in \bbN[I]\times \bbN[I]$,\\
(a) the Grothendieck group $\cK_{\ue}$ is a free $\bbZ[\overline{\bbQ}_l^*]$-module with two basis $\cS_{\ue}$ and $\cI_{\ue}$;\\
(b) the transition matrix between $\cI_{\ue}$ and $\cS_{\ue}$ is upper triangular (with respect to the degeneration order $\preccurlyeq$) whose diagonal entries are $1$.
\end{proposition}
\begin{proof}
It follows from the same argument in \cite[\S 9.4]{Lusztig-1990}.
\end{proof}

Both $\cS$ and $\cI$ are $\bbZ[\overline{\bbQ}_l^*]$-bases of $\cK$. Note that these two bases are parametrized by orbits, and so parametrized by isomorphism classes of objects in $\cC_2(\cP)$.

For any $P\in \cP$, we define 
\begin{align*}
B_{K_P}&=(j_{K_P})_!(f_{K_P})_!(\overline{\bbQ}_l|_{\rG_{\ue_{K_P}}})\in \cD^b_{\rG_{\ue_{K_P}},m}(\rC_{\ue_{K_P}}),\\
B_{K_P^*}&=(j_{K_P^*})_!(f_{K_P^*})_!(\overline{\bbQ}_l|_{\rG_{\ue_{K_P^*}}})\in \cD^b_{\rG_{\ue_{K_P^*}},m}(\rC_{\ue_{K_P^*}})
\end{align*}
associated to $K_P=(1,0)\in \rC_{\ue_{K_P}},K_P=(0,1)\in \rC_{\ue_{K_P^*}}$ respectively, where 
\begin{align*}
f_{K_P}:\rG_{\ue_{K_P}}&\rightarrow \cO_{K_P} &f_{K_P^*}:\rG_{\ue_{K_P^*}}&\rightarrow \cO_{K_P^*}\\
(g^1,g^0)&\mapsto (g^1,g^0).(1,0), &(g^1,g^0)&\mapsto (g^1,g^0).(0,1),
\end{align*}
and $f_{K_P}$ is a principal $\Aut_{\cC_2(\cP)}(K_P)$-bundle, $f_{K_P^*}$ is a principal $\Aut_{\cC_2(\cP)}(K_P^*)$-bundle.

\begin{theorem}\label{induction algebra}
All induction functors $\Ind^{\ue}_{\ue',\ue''}$ for $\ue=\ue'+\ue''$ induce a multiplication 
$$*:\cK\otimes_{\bbZ[\overline{\bbQ}_l^*]}\cK\rightarrow \cK$$
such that $\cK$ is an associative algebra, and the trace map induces an algebra isomorphism
$$\chi:\bbC\otimes_{\bbZ[\overline{\bbQ}_l^*]} \cK\rightarrow \tilde{\cH}^{\tw}(\cC_2(\cP_q)),$$
where $\bbC$ is viewed as a $\bbZ[\overline{\bbQ}_l^*]$-module via the fixed isomorphism $\bbC\cong \overline{\bbQ}_l$. Moreover, the subset $\{[B_{K_P}],[B_{K_P^*}]\mid P\in \cP\}\subset \cK$ satisfies the Ore conditions, and so there is a well-defined localization 
$$\cD\cK=\cK[[B_{K_P}]^{-1},[B_{K_P^*}]^{-1}\mid P\in \cP]$$
with a reduced quotient
$$\cD\cK^{\red}=\cD\cK/\langle [B_{K_P}]*[B_{K_P^*}]-1\mid P\in \cP\rangle$$
such that the trace map induces algebra isomorphisms 
$$\bbC\otimes_{\bbZ[\overline{\bbQ}_l^*]} \cD\cK\cong \cD\tilde{\cH}(\cA_q),\ \bbC\otimes_{\bbZ[\overline{\bbQ}_l^*]} \cD\cK^{\red}\cong \cD\tilde{\cH}^{\red}(\cA_q).$$
\end{theorem}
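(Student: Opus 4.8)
The plan is to assemble Theorem \ref{induction algebra} from three ingredients already in place: the associativity of induction (Proposition \ref{associativity}), the sheaf-function compatibility (Proposition \ref{chi}), and the function-side realization of Bridgeland's Hall algebra (Corollary \ref{function-isomorphism}). First I would establish that $*$ is well-defined on Grothendieck groups: the induction functor $\Ind^{\ue}_{\ue',\ue''}$ sends distinguished triangles to distinguished triangles (being a composite of the exact functors $(p_1)^*$, $(p_2)_\flat$, $(p_3)_!$, together with shift and Tate twist), hence descends to a $\bbZ[\overline{\bbQ}_l^*]$-bilinear pairing $\cK_{\ue'}\otimes\cK_{\ue''}\to\cK_{\ue}$; summing over all decompositions $\ue=\ue'+\ue''$ gives $*$ on $\cK$. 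Associativity of $*$ is the Grothendieck-group shadow of Proposition \ref{associativity}, and the unit is $[\overline{\bbQ}_l|_{\rC_{\underline 0}}]$ by the remark following that proposition. So $\cK$ is an associative unital $\bbZ[\overline{\bbQ}_l^*]$-algebra.

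Next I would check that $\chi$ is an algebra homomorphism. By Theorem \ref{sheaf-function correspondence}(a) the trace map $\chi:\cK_{\ue}\to\tilde\cH_{\rG^\sigma_{\ue}}(\rC^\sigma_{\ue})$ is well-defined and additive; part (b) shows it intertwines Tate twists with multiplication by $\sqrt q$, so it is $\bbZ[\overline{\bbQ}_l^*]$-linear once $\overline{\bbQ}_l^*$ acts on the target through $v_q=-\sqrt q$; tensoring with $\bbC$ over $\bbZ[\overline{\bbQ}_l^*]$ gives a $\bbC$-linear map $\bbC\otimes\cK\to\tilde\cH_q(\cC_2(\cP))$. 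Multiplicativity is exactly Proposition \ref{chi}: $\chi_{\Ind^{\ue}_{\ue',\ue''}(A\boxtimes B)}=\ind^{\ue}_{\ue',\ue''}(\chi_A\otimes\chi_B)$. For bijectivity I would argue that the basis $\cI$ (equivalently $\cS$) of $\cK$ maps, under $\chi$, to a spanning set of $\tilde\cH_q(\cC_2(\cP))$: indeed $\chi_{S_{M_\bullet}}=\pm v_q^{-\dim\cO_{M_\bullet}}1_{\cO_{M_\bullet}}$ up to sign, and $\{1_{\cO_{M_\bullet}}\}$ is the standard basis of $\tilde\cH_q(\cC_2(\cP))$ by Corollary \ref{function-isomorphism}; since both sides are free of the same (graded) rank on orbits, $\chi$ is an isomorphism. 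Replacing $\sigma$ by $\sigma^n$ and using that $\chi$ over all $\bbF_{q^n}$ jointly detects complexes (the standard argument that a virtual complex with vanishing trace functions over all finite fields is zero) confirms injectivity at the level of $\bbC\otimes\cK$.

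For the localization statement I would transport the Ore conditions across $\chi$. Under the isomorphism $\chi$ the classes $[B_{K_P}],[B_{K_P^*}]$ map to scalar multiples of $1_{\cO_{K_P}},1_{\cO_{K_P^*}}$ — this uses that $B_{K_P}=(j_{K_P})_!(f_{K_P})_\flat(\overline{\bbQ}_l)$ has trace function supported on $\cO_{K_P}^\sigma$ with constant value (a power of $v_q$), so $[B_{K_P}]$ corresponds to $a_{K_P}1_{\cO_{K_P}}$ up to a unit in $\bbZ[\overline{\bbQ}_l^*]$, and likewise for $B_{K_P^*}$. Corollary \ref{function-isomorphism} already records that $\{a_{K_P}1_{\cO_{K_P}},a_{K_P^*}1_{\cO_{K_P^*}}\}$ satisfies the Ore conditions; hence so does $\{[B_{K_P}],[B_{K_P^*}]\}$ in $\cK$, the localization $\cD\cK$ and the reduced quotient $\cD\cK^{\red}$ are well-defined, and $\chi$ extends to isomorphisms $\bbC\otimes\cD\cK\cong\cD\tilde\cH_q(\cA)$ and $\bbC\otimes\cD\cK^{\red}\cong\cD\tilde\cH_q^{\red}(\cA)$, because localization and reduced quotient are functorial and $\chi$ sends the relation $[B_{K_P}]*[B_{K_P^*}]-1$ to (a unit times) $(a_{K_P}1_{\cO_{K_P}})*(a_{K_P^*}1_{\cO_{K_P^*}})-1$.

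The main obstacle I expect is the precise bookkeeping in the bijectivity step: one must verify that the transition matrix between $\cI$ (or $\cS$) and the image basis is unimodular over $\bbZ[\overline{\bbQ}_l^*]$, which requires the purity/pointwise-purity of the $\IC$-sheaves on $\rC_{\ue}$ (so their trace functions have the expected Lefschetz structure) and the fact, from Lemma \ref{Finite orbits}, that $\rC_{\ue}$ has finitely many orbits each carrying only the constant equivariant local system — this is what makes $\cS_{\ue}$ a genuine basis and lets the "trace over all $\bbF_{q^n}$" argument conclude. The associativity and Ore-condition parts are essentially formal once Propositions \ref{associativity}, \ref{chi} and Corollary \ref{function-isomorphism} are in hand.
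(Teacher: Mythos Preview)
Your proposal is correct and follows essentially the same route as the paper: associativity from Proposition \ref{associativity}, multiplicativity of $\chi$ from Proposition \ref{chi}, and the Ore conditions transported from Corollary \ref{function-isomorphism} via the identification $\chi([B_{K_P}])=a_{K_P}1_{\cO_{K_P}}$.

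The one place where the paper's argument is sharper than what you anticipate in your ``main obstacle'' paragraph is the bijectivity step. No purity of $\IC$-sheaves is needed, and the ``trace over all $\bbF_{q^n}$'' argument is a detour. The paper instead observes that since $\mathbf{Q}$ is Dynkin, every object of $\cC_2(\cP_k)$ is defined over $\bbF_q$ (via \cite[Lemma 4.2]{Bridgeland-2013}), so every $\rG_{\ue}$-orbit $\cO_{M_\bullet}\subset\rC_{\ue}$ contains a $\sigma$-fixed point; Lang's theorem together with the connectedness of stabilizers (established just before Lemma \ref{Finite orbits}) then shows $\cO_{M_\bullet}^\sigma$ is a \emph{single} $\rG_{\ue}^\sigma$-orbit. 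This yields a bijection between $\rG_{\ue}$-orbits in $\rC_{\ue}$ and $\rG_{\ue}^\sigma$-orbits in $\rC_{\ue}^\sigma$, and your own computation $\chi_{S_{M_\bullet}}=v_q^{-\dim\cO_{M_\bullet}}1_{\cO_{M_\bullet}^\sigma}$ then sends the $\cS$-basis to units times the orbit-basis, giving the isomorphism directly.
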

\begin{proof}
By Proposition \ref{associativity}, the $\bbZ[\overline{\bbQ}_l^*]$-module $\cK$ is an associative algebra. By Proposition \ref{chi}, the trace map induces an algebra homomorphism. Since $\mathbf{Q}$ is a Dynkin quiver, all finite-dimensional representations over $k=\overline{\bbF}_q$ are defined over $\bbF_q$, that is, for any $A\in \cA_k$, where exists $A_0\in \cA_{q}$ such that $A\cong k\otimes_{\bbF_q} A_0$. In particular, objects in $\cP_k$ are defined over $\bbF_q$, and moreover, objects in $\cC_2(\cP_k)$ are defined over $\bbF_q$ by \cite[Lemma 4.2]{Bridgeland-2013}. Thus for any $\rG_{\ue}$-orbit $\cO_{M_\bullet}$, it contains a $\sigma$-fixed point $M_{\bullet0} \in \rC_\ue^\sigma$. By Lang's theorem, we have $\cO_{M_{\bullet}}^\sigma\cong\rG_{\ue}^\sigma/\Stab(\cO_{M_{\bullet0}})$ which coincides with the $\rG_{\ue}^\sigma$-orbit of $M_{\bullet0}$ in $\rC_\ue^\sigma$. The trace map sends a basis element $S_{M_\bullet}\in \cK_{\ue}$ to $v_q^{-\dim \cO_{M_{\bullet}}}1_{\cO_{M_{\bullet}}^\sigma}\in \tilde{\cH}_{\rG_\ue^\sigma}(\rC_{\ue}^\sigma)$. All these elements $v_q^{-\dim \cO_{M_{\bullet}}}1_{\cO_{M_{\bullet}}^\sigma}$ also form a basis of $\tilde{\cH}_{\rG_\ue^\sigma}(\rC_{\ue}^\sigma)$, and so the trace map induces an isomorphism $\bbC\otimes_{\bbZ[\overline{\bbQ}_l^*]} \cK\cong \tilde{\cH}^{\tw}(\cC_2(\cP_q))$. Moreover, notice that the trace map sends the elements $[B_{K_P}]$ and $[B_{K_P^*}]$ to $a_{K_P}1_{\cO_{K_P}}$ and $a_{K_P^*}1_{\cO_{K_P^*}}$ respectively, thus the subset $\{[B_{K_P}],[B_{K_P^*}]\mid P\in \cP\}$ also satisfies the Ore conditions. It is obvious the trace map induces algebra isomorphisms 
$$\bbC\otimes_{\bbZ[\overline{\bbQ}_l^*]} \cD\cK\cong \cD\tilde{\cH}(\cA_q),\ \bbC\otimes_{\bbZ[\overline{\bbQ}_l^*]} \cD\cK^{\red}\cong \cD\tilde{\cH}^{\red}(\cA_q)$$
between the localizations, and between the reduced quotients. 
 \end{proof}

Combining with Corollary \ref{function-isomorphism}, we obtain isomorphisms
$$\bbC\otimes_{\bbZ[\overline{\bbQ}_l^*]} \cD\cK^{\red}\cong \cD\tilde{\cH}^{\red}(\cA_q)\cong \cD\cH^{\red}(\cA_q).$$
Replacing the Frobenius morphism $\sigma$ by its power $\sigma^n$ for $n\geqslant 1$, we can obtain similar results over the finite field $\bbF_{q^n}$. This completes a realization of Bridgeland's Hall algebra via constructible sheaves.

\section{Perverse sheaves realization of Bridgeland's Hall algebra}\label{Perverse sheaves realization of Bridgeland's Hall algebra}

\subsection{Hyperbolic localization functor}\label{Hyperbolic localization}

In this subsection, we review a theorem by Braden in \cite{Braden-2003}. Another proof without the assumption that $X$ is a normal variety is given by Drinfeld-Gaitsgory in \cite{Drinfeld-Gaitsgory-2014}.

Let $X$ be a (normal) $k$-variety together with a $k^*$-action, and $X^{k^*}$ be the subvariety of fixed points with connected components $X_1,...,X_r$. We define
\begin{align*}
X_i^+=\{x\in X\mid \lim_{t\rightarrow 0}t.x\in X_i\},\ X_i^-=\{x\in X\mid \lim_{t\rightarrow\infty}t.x\in X_i\}
\end{align*}
for $i=1,...,r$. Let $X^\pm=\bigsqcup^r_{i=1}X_i^\pm$ be their disjoint unions, and 
\begin{align*}
&f^\pm:X^{k^*}=\bigsqcup^r_{i=1}X_i\rightarrow \bigsqcup^r_{i=1}X_i^\pm=X^\pm,\ g^\pm:X^\pm\rightarrow X
\end{align*}
be the inclusions. The hyperbolic localization functors are defined by 
\begin{align*}
(-)^{!*}:\cD^b(X)&\rightarrow \cD^b(X^{k^*}) &(-)^{*!}:\cD^b(X)&\rightarrow \cD^b(X^{k^*})\\
K&\mapsto(f^+)^!(g^+)^*(K), &K&\mapsto(f^-)^*(g^-)^!(K).
\end{align*}

An object $K$ in $\cD^b(X)$ is said to be weakly equivariant, if $\mu^*(K)\cong L\boxtimes K$ for some locally constant sheaf $L$ on $k^*$, where $\mu:k^*\times X\rightarrow X$ is the morphism defining the $k^*$-action on $X$. Note that if $K$ is a $k^*$-equivariant perverse sheaf, or more generally, a $k^*$-equivariant semisimple complex on $X$, then $K$ is weakly equivariant, since $\mu^*(K)\cong p^*(K)\cong \overline{\bbQ}_l\boxtimes K$, where $p:k^*\times X\rightarrow X$ is the natural projection.

By \cite[Theorem 1]{Braden-2003}, for any weakly equivariant object $K$, there is an isomorphism $K^{*!}\xrightarrow{\cong}K^{!*}$. Moreover, \cite[Theorem 8]{Braden-2003}, the hyperbolic localization functors preserve purity of weakly equivariant mixed sheaves. As a result, the hyperbolic localization functors send weakly equivariant mixed semisimple complexes to semisimple complexes (see \cite[Theorem 5.7.9]{Pramod-2021}). The hyperbolic localization functors have another descriptions. We define 
\begin{align*}
\pi^+:X^+&\rightarrow X^{k^*} &\pi^-:X^-&\rightarrow X^{k^*}\\
x&\mapsto \lim_{t\rightarrow 0}t.x, &x&\mapsto \lim_{t\rightarrow \infty}t.x,
\end{align*}
then for any weakly equivariant object $K$, by \cite[formula (1)]{Braden-2003}, there are isomorphisms
\begin{align*} 
K^{!*}\cong (\pi^+)_!(g^+)^*(K),\ K^{*!}\cong (\pi^-)_*(g^-)^!(K).
\end{align*}

Notice that replacing the $k^*$-action by the opposite action interchanges $X^+$ and $X^-$, so the hyperbolic localization functors are sent to their Verdier duals. More precisely, for any weakly equivariant object $K$, we have
$$(\bbD K)^{!*}\cong (\pi^+)_!(g^+)^*(\bbD K)\cong \bbD(\pi^+)_*(g^+)^!(K)\cong \bbD(K^{*!,\textrm{opp}})\cong \bbD(K^{!*,\textrm{opp}}),$$
where $K^{*!,\textrm{opp}},K^{!*,\textrm{opp}}$ are defined with respect to the opposite $k^*$-action on $X$, that is, $t._{\textrm{opp}}x=t^{-1}.x$.

\subsection{Restriction functor and hyperbolic localization functor}

In this subsection, we prove that the restriction functor $\Res^{\ue}_{\ue',\ue''}$ defined in \S \ref{Restriction functor} is a hyperbolic localization functor. We use the same notations as them in \S \ref{Restriction functor}.

\begin{proposition}\label{restriction-hyperbolic}
Let $\ue,\ue',\ue''\in  \bbN[I]\times \bbN[I]$ be such that $\ue=\ue'+\ue''$. Then the composition of functors 
$$\cD^b_{m}(\rC_{\ue})\xrightarrow{\iota^*}\cD^b_{m}({\rm{F}})\xrightarrow{\kappa_!}\cD^b_{m}(\rC_{\ue'}\times \rC_{\ue''})$$
is a hyperbolic localization functor, and so the restriction functor can be restricted to a functor
$$\Res^{\ue}_{\ue',\ue''}\!:\!\cD^{b,ss}_{\rG_{\ue},m}(\rC_{\ue})\rightarrow \cD^{b,ss}_{\rG_{\ue'}\times \rG_{\ue''},m}(\rC_{\ue'}\times \rC_{\ue''})$$
which sends semisimple complexes to semisimple complexes.
\end{proposition}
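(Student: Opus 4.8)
The plan is to realize the composite $\kappa_!\iota^*$ as a hyperbolic localization functor for an explicit $k^*$-action on $\rC_\ue$, and then to read off the semisimplicity statement from Braden's theorem (Theorem \ref{hyperbolic}).

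First I would introduce the $k^*$-action. Using the fixed decomposition $P^j=P'^j\oplus P''^j$, let $\chi\colon k^*\to\rG_\ue$ be the cocharacter whose component $\chi^j(\lambda)$ acts as the identity on $P'^j$ and as multiplication by $\lambda$ on $P''^j$, and let $k^*$ act on $\rC_\ue$ through $\chi$ and the $\rG_\ue$-action. Writing $(d^1,d^0)\in\rC_\ue$ in block form with respect to $P^j=P'^j\oplus P''^j$,
\begin{equation*}
d^j=\begin{pmatrix} a^j & b^j\\ c^j & e^j\end{pmatrix}\colon P'^j\oplus P''^j\longrightarrow P'^{j+1}\oplus P''^{j+1},
\end{equation*}
a direct computation with the $\rG_\ue$-action $(g^1,g^0).(d^1,d^0)=(g^0d^1(g^1)^{-1},g^1d^0(g^0)^{-1})$ shows that the diagonal blocks $a^j,e^j$ have weight $0$, the block $b^j\colon P''^j\to P'^{j+1}$ has weight $-1$, and the block $c^j\colon P'^j\to P''^{j+1}$ has weight $+1$. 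Hence the fixed-point locus $\rC_\ue^{k^*}$ is the set of block-diagonal complexes $\{b^j=c^j=0\}$, which is identified with $\rC_{\ue'}\times\rC_{\ue''}$ by sending such a complex to its diagonal blocks (this is $\kappa$ restricted to the fixed locus, using $\rho_1$ and $\rho_2=\mathrm{id}$); the attracting set $\rC_\ue^{+}=\{x:\lim_{\lambda\to0}\lambda.x\text{ exists}\}$ is the vanishing locus of the negative-weight coordinates $\{b^1=b^0=0\}$, which is exactly $\rF$; and the contraction $\pi^{+}\colon\rC_\ue^{+}\to\rC_\ue^{k^*}$ is the projection onto the weight-$0$ part, which on $\rF$ (where $b^j=0$, so that $d^j|_{P''^j}$ is the $e^j$-block) sends $(d^1,d^0)$ to $(\rho_{1*}(d^1,d^0),\rho_{2*}(d^1,d^0))=\kappa(d^1,d^0)$. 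Therefore $\iota$ is the inclusion $g^{+}$ of the attracting set and $\kappa=\pi^{+}$, so $\kappa_!\iota^*=(\pi^{+})_!(g^{+})^*=(-)^{!*}$ is the hyperbolic localization functor for this $k^*$-action.

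For the remaining assertions, observe that any $C\in\cD^{b,ss}_{\rG_\ue,m}(\rC_\ue)$ is, by restriction of its equivariance structure along $\chi$, a $k^*$-equivariant mixed semisimple complex, hence a weakly equivariant mixed sheaf. By Theorem \ref{hyperbolic}, the hyperbolic localization functor sends it to a semisimple complex, and the shift by $|\ue',\ue''|$ together with the Tate twist $(\frac{|\ue',\ue''|}{2})$ preserves mixedness and semisimplicity; thus $\Res^\ue_{\ue',\ue''}(C)$ is mixed semisimple. Finally, the $\rQ$-equivariance of $\kappa$ and $\iota$ recorded in Subsection \ref{Restriction functor}, combined with the descent equivalence $\cD^{b,ss}_{\rQ}(\rC_{\ue'}\times\rC_{\ue''})\simeq\cD^{b,ss}_{\rG_{\ue'}\times\rG_{\ue''},m}(\rC_{\ue'}\times\rC_{\ue''})$ of Proposition \ref{sub-quotient-equivariant} (valid since $\rU$ acts trivially on $\rC_{\ue'}\times\rC_{\ue''}$) and the preceding lemma identifying this category with $\cD^{b,ss}_{\rG_{\ue'},m}(\rC_{\ue'})\boxtimes\cD^{b,ss}_{\rG_{\ue''},m}(\rC_{\ue''})$, upgrades $\Res^\ue_{\ue',\ue''}$ to the claimed functor with values in the semisimple subcategory.

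I expect the only real obstacle to be the bookkeeping in the second paragraph: one must fix the sign of $\chi$ so that $\rF$ becomes the attracting set (rather than the repelling one) and $\kappa$ is matched with $\pi^{+}$ (rather than $\pi^{-}$), and one should note that $\rF=\rC_\ue^{+}$ holds on the nose as closed subvarieties because both are the common zero locus of the linear coordinates $b^1,b^0$. One should also keep in mind that $\rC_\ue$, being cut out by the quadratic relations $d^{j+1}d^j=0$, need not be normal, which is why one uses the form of Braden's theorem discussed in Subsection \ref{Hyperbolic localization} that drops the normality hypothesis. Granting these, the argument runs exactly parallel to Lusztig's identification of $\Res^\nu_{\nu',\nu''}$ with a hyperbolic localization functor.
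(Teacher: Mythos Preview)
Your proof is correct and follows essentially the same approach as the paper: you introduce the same cocharacter into $\rG_\ue$ (identity on $P'^j$, scaling on $P''^j$), compute the same weights on the block entries, identify $\rF$ with the attractor and $\kappa$ with the contraction $\pi^+$, and then invoke Braden's theorem via the forgetful functor along this cocharacter. The paper's argument differs only in notation and in presenting the identification $\kappa_!\iota^*\cong(\pi^+)_!(g^+)^*$ via a commutative square of isomorphisms rather than a literal equality.
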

\begin{proof}
We denote by $P^j=\bigoplus_{i\in I}e^j_iP_i,P'^j=\bigoplus_{i\in I}e'^j_iP_i,P''^j=\bigoplus_{i\in I}e''^j_iP_i$, and so $P^j=P'^j\oplus P''^j$ for any $j\in \bbZ_2$. For any $(d^1,d^0)\in \rC_{\ue}$, we write $d^j\in \Hom_{\cA}(P^j,P^{j+1})$ in the matrix form
$$\xymatrix@C=3cm{P'^1\oplus P''^1 \ar@<.5ex>[r]^-{{d^1=\begin{pmatrix}d'^1 &\beta^1\\
\alpha^1 &d''^1\end{pmatrix}}} &P'^0\oplus P''^0 \ar@<.5ex>[l]^-{d^0=\begin{pmatrix}d'^0 &\beta^0\\
\alpha^0 &d''^0\end{pmatrix}}}$$
There is a one-parameter subgroup 
\begin{align*}
\zeta:k^*&\hookrightarrow \rQ=\rQ^1\times \rQ^0\\
t&\mapsto (\Id_{P'^1}\oplus t\Id_{P''^1},\Id_{P'^0}\oplus t\Id_{P''^0})
\end{align*}
such that $k^*$ acts on $\rC_{\ue}$ via $t.(d^1,d^0)=\zeta(t).(d^1,d^0)$, more precisely, 
$$t.(\begin{pmatrix}d'^1 &\beta^1\\ \alpha^1 &d''^1\end{pmatrix},\begin{pmatrix}d'^0 &\beta^0\\ \alpha^0 &d''^0\end{pmatrix})=(\begin{pmatrix}d'^1 &t^{-1}\beta^1\\
t\alpha^1 &d''^1\end{pmatrix},\begin{pmatrix}d'^0 &t^{-1}\beta^0\\
t\alpha^0 &d''^0\end{pmatrix}).$$
Then there is a commutative diagram
$$\xymatrix{\rC_{\ue}^{k^*} \ar[d]_-{\cong} &\rC_{\ue}^+ \ar[d]_-{\cong} \ar[l]_-{\pi^+} \ar[r]^-{g^+} &\rC_{\ue} \ar@{=}[d]\\ \rC_{\ue'}\times \rC_{\ue''} &\rF \ar[l]_-{\kappa} \ar[r]^-{\iota} &\rC_{\ue} }$$
Hence the functor $\kappa_!\iota^*\cong (\pi^+)_!(g^+)^*$ is a hyperbolic localization functor. For any $C\in \cD^{b,ss}_{\rQ,m}(\rC_{\ue})$, it can be viewed as an object in $\cD^{b,ss}_{k^*,m}(\rC_{\ue})$ via $\zeta:k^*\hookrightarrow \rQ$ and the forgetful functor $\cD^{b,ss}_{\rQ,m}(\rC_{\ue})\rightarrow\cD^{b,ss}_{k^*,m}(\rC_{\ue})$, then it is weakly equivariant. By Braden's Theorem (see \S \ref{Hyperbolic localization}), $\kappa_!\iota^*(C)$ is a semisimple complex on $\rC_{\ue'}\times \rC_{\ue''}$. Therefore, 
$$\Res^{\ue}_{\ue',\ue''}:\cD^{b,ss}_{\rG_{\ue},m}(\rC_{\ue})\rightarrow \cD^{b,ss}_{\rG_{\ue'}\times \rG_{\ue''},m}(\rC_{\ue'}\times \rC_{\ue''})$$
sends semisimple complexes to semisimple complexes.
\end{proof}

We denote by $\tau:\rC_{\ue''}\times \rC_{\ue'}\rightarrow \rC_{\ue'}\times \rC_{\ue''}$ the isomorphism switching two coordinates.

\begin{corollary}\label{Verdier}
For any $\ue=\ue'+\ue''\in \bbN[I]\times \bbN[I]$ and $C\in\cD^{b,ss}_{\rG_{\ue},m}(\rC_{\ue})$, we have 
$$(\bbD\boxtimes \bbD)\Res^{\ue}_{\ue',\ue''}(\bbD C)\cong\tau_!\Res^{\ue}_{\ue'',\ue'}(C)[-\Vert\ue',\ue''\Vert](-\frac{\Vert\ue',\ue''\Vert}{2}),$$
where $\bbD\boxtimes \bbD$ is the Verdier dual of $\cD^{b,ss}_{\rG_{\ue'}\times \rG_{\ue''},m}(\rC_{\ue'}\times \rC_{\ue''})$, and $\Vert\ue',\ue''\Vert=|\ue',\ue''|+|\ue'',\ue'|$ is a symmetric bilinear form.
\end{corollary}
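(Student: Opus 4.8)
The plan is to interchange Verdier duality with the hyperbolic localization description of $\Res^{\ue}_{\ue',\ue''}$ established in Proposition \ref{restriction-hyperbolic}, using the general fact recorded at the end of subsection \ref{Hyperbolic localization} that replacing a $k^*$-action by its opposite interchanges $X^+$ and $X^-$ and sends the hyperbolic localization functor to the Verdier dual of the hyperbolic localization functor for the opposite action. Concretely, recall from the proof of Proposition \ref{restriction-hyperbolic} that $\kappa_!\iota^*\cong(\pi^+)_!(g^+)^*$ for the one-parameter subgroup $\zeta:k^*\hookrightarrow\rQ$ acting on $\rC_\ue$ by scaling the ``off-diagonal'' maps $\alpha^j$ by $t$ and $\beta^j$ by $t^{-1}$; the attracting set $\rC_\ue^+$ is identified with $\rF=\rF^\ue_{\ue',\ue''}$. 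The key observation is that the \emph{opposite} $k^*$-action, $t._{\textrm{opp}}x=t^{-1}.x$, scales $\alpha^j$ by $t^{-1}$ and $\beta^j$ by $t$; its attracting set is the closed subvariety $\rF'$ of $\rC_\ue$ consisting of $(d^1,d^0)$ for which $(P'^1,P'^0)$ is stable, and the corresponding hyperbolic localization is exactly $\tau\circ\Res^{\ue}_{\ue'',\ue'}$ up to the normalizing shift-and-twist, because swapping the roles of $P'$ and $P''$ is precisely what the switch $\tau$ records.

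First I would make this bookkeeping precise: write down the two Braden diagrams (one for $\zeta$, one for the opposite action), identify $\rC_\ue^{+}\cong\rF$ and $\rC_\ue^{+,\textrm{opp}}\cong\rF'$, and observe that the map $\kappa':\rF'\to\rC_{\ue''}\times\rC_{\ue'}$ (restricting $(d^1,d^0)$ to $(P'^1,P'^0)$ and passing to the quotient on $(P''^1,P''^0)$) composed with $\tau$ gives $\kappa$ with the two factors swapped. Second I would invoke Theorem \ref{hyperbolic-1} (Braden): any $C\in\cD^{b,ss}_{\rG_\ue,m}(\rC_\ue)$, viewed via $\zeta$, is weakly equivariant, so $C^{!*}\cong C^{*!}$ and likewise for $\bbD C$; then apply the identity displayed just before subsection \ref{Lusztig's categorification}, namely $(\bbD C)^{!*}\cong\bbD(C^{!*,\textrm{opp}})$, where the right-hand superscript denotes hyperbolic localization for the opposite action. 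Translating both sides through the identifications $\rF\cong\rC_{\ue'}\times\rC_{\ue''}$ and $\rF'\cong\rC_{\ue''}\times\rC_{\ue'}$ yields $\bbD\big(\kappa_!\iota^*(\bbD C)\big)\cong\tau_!\,\kappa'_!\,\iota'^*(C)$ at the level of the \emph{unnormalized} functors.

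Third, I would reinsert the normalization. By definition $\Res^{\ue}_{\ue',\ue''}(C)=\kappa_!\iota^*(C)[|\ue',\ue''|](\tfrac{|\ue',\ue''|}{2})$ and $\Res^{\ue}_{\ue'',\ue'}(C)=\kappa'_!\iota'^*(C)[|\ue'',\ue'|](\tfrac{|\ue'',\ue'|}{2})$. Since $\bbD$ commutes with $\tau_!$ up to the usual self-duality of $\tau$ (an isomorphism, so no twist) and sends $[n](\tfrac{n}{2})$ to $[-n](-\tfrac{n}{2})$, a direct comparison of shifts and twists produces exactly the correction factor $[-\Vert\ue',\ue''\Vert](-\tfrac{\Vert\ue',\ue''\Vert}{2})$ with $\Vert\ue',\ue''\Vert=|\ue',\ue''|+|\ue'',\ue'|$, giving the claimed formula; on the semisimple subcategory this is well-defined by Proposition \ref{restriction-hyperbolic}, and $\bbD\boxtimes\bbD$ is the duality on $\cD^{b,ss}_{\rG_{\ue'},m}(\rC_{\ue'})\boxtimes\cD^{b,ss}_{\rG_{\ue''},m}(\rC_{\ue''})$ under the equivalence of the preceding lemma. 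The main obstacle I anticipate is purely organizational rather than conceptual: one must be scrupulous about which $k^*$-action is ``the'' action and which is ``opposite,'' and about the direction of $\tau$, since an error there flips the sign of the normalization; I would resolve this by checking the formula on a single characteristic function via the sheaf--function dictionary of Theorem \ref{sheaf-function correspondence}, where Verdier duality corresponds to a concrete arithmetic duality and the bilinear forms $|\ue',\ue''|$, $|\ue'',\ue'|$ appear explicitly.
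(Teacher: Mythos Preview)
Your proposal is correct and follows essentially the same route as the paper: both identify $\Res^{\ue}_{\ue',\ue''}$ with the hyperbolic localization functor $(\,\cdot\,)^{!*}$ for the $k^*$-action of Proposition \ref{restriction-hyperbolic}, observe that the opposite action yields $\tau_!\Res^{\ue}_{\ue'',\ue'}$ (up to normalization), and then apply the identity $(\bbD C)^{!*}\cong\bbD\bigl(C^{!*,\textrm{opp}}\bigr)$ from subsection \ref{Hyperbolic localization} together with the bookkeeping $\bbD\bigl([n](\tfrac{n}{2})\bigr)=[-n](-\tfrac{n}{2})$ to produce the twist $[-\Vert\ue',\ue''\Vert](-\tfrac{\Vert\ue',\ue''\Vert}{2})$. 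The paper's proof is slightly more compressed---it writes the chain of isomorphisms directly rather than spelling out the identification of $\rF'$ and $\kappa'$---but the content is the same, and your suggested sheaf--function sanity check is a reasonable (though unnecessary) safeguard against sign errors.
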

\begin{proof}
With respect to the $k^*$-action defined in the proof of Proposition \ref{restriction-hyperbolic}, notice that 
\begin{align*}
\Res^{\ue}_{\ue',\ue''}(\bbD C)&\cong (\bbD C)^{!*}[|\ue',\ue''|](\frac{|\ue',\ue''|}{2}),\\
\tau_!\Res^{\ue}_{\ue'',\ue'}(C)&\cong(C)^{!*,\textrm{opp}}[|\ue'',\ue'|](\frac{|\ue'',\ue'|}{2}).
\end{align*}
By the relation between the Verdier dual and the hyperbolic localization functors, see \S \ref{Hyperbolic localization}, we have 
\begin{align*}
(\bbD\boxtimes \bbD)\Res^{\ue}_{\ue',\ue''}(\bbD C)\cong&(\bbD\boxtimes \bbD)((\bbD C)^{!*}[|\ue',\ue''|](\frac{|\ue',\ue''|}{2}))\\
\cong &(\bbD\boxtimes \bbD)(\bbD C)^{!*}[-|\ue',\ue''|](-\frac{|\ue',\ue''|}{2})\\
\cong &(\bbD\boxtimes \bbD)(\bbD\boxtimes \bbD)(C)^{!*,\textrm{opp}}[-|\ue',\ue''|](-\frac{|\ue',\ue''|}{2})\\
\cong &\tau_!\Res^{\ue}_{\ue'',\ue'}(C)[-|\ue',\ue''|-|\ue'',\ue'|](-\frac{|\ue',\ue''|+|\ue'',\ue'|}{2}),
\end{align*}
as desired.
\end{proof}

\subsection{Semisimple Grothendieck group and localization}\label{Bridgeland's Hall algebra via perverse sheaves}

\begin{definition}
{\rm{(a)}} For $\ue\in \bbN[I]\times \bbN[I]$, we define $\cK_{\ue}^{ss}$ to be the Grothendieck group of $\cD^{b,ss}_{\rG_{\ue},m}(\rC_{\ue})$, and define a $\cZ$-module structure on it via
 $$v.[L]=[L[-1](-\frac{1}{2})].$$
{\rm{(b)}} We define the direct sum and its graded dual
$$
\cK^{ss}=\bigoplus_{\ue}\cK_{\ue}^{ss},\ \
\cK^{ss,*}=\bigoplus_{\ue}\cK^{ss,*}_{\ue}=\bigoplus_{\ue}\Hom_{\cZ}(\cK_{\ue}^{ss},\cZ).$$
\end{definition}

\begin{proposition}
The $\cZ$-module $\cK_{\ue}^{ss}$ is free with a basis $\cI_{\ue}$.
\end{proposition}
\begin{proof}
The $\cZ$-module $\cK_{\ue}^{ss}$ is free with a basis given by $\rG_{\ue}$-equivariant simple perverse sheaves. By Lemma \ref{Finite orbits}, any $\rG_{\ue}$-equivariant simple perverse sheaf is of the form $\IC(\cO_{M_\bullet},\overline{\bbQ}_l)$ for some $\rG_{\ue}$-orbit $\cO_{M_\bullet}\subset \rC_{\ue}$. Hence $\cI_{\ue}$ is a $\cZ$-basis of $\cK_{\ue}^{ss}$.
\end{proof}

\begin{definition}
We define $\cI^*_{\ue}=\{I_{M_\bullet}^*|\cO_{M_{\bullet}}\subset \rC_{\ue}\}\subset \cK^{ss,*}_{\ue}$ to be the dual basis of $\cI_{\ue}=\{I_{M_\bullet}|\cO_{M_{\bullet}}\subset \rC_{\ue}\}\subset \cK^{ss}_{\ue}$, and define the unions 
$$\cI^*=\bigsqcup_{\ue}\cI_{\ue}^*$$
which is a $\cZ$-basis of $\cK^{ss,*}$.
\end{definition}

\begin{lemma}\label{external tensor product}
For any $\ue',\ue''\in \bbN[I]\times \bbN[I]$, the exterior tensor functor $$-\boxtimes-:\cD^{b}_{\rG_{\ue'},m}(\rC_{\ue'})\times \cD^{b}_{\rG_{\ue''},m}(\rC_{\ue''})\rightarrow \cD^{b}_{\rG_{\ue'}\times \rG_{\ue''},m}(\rC_{\ue'}\times \rC_{\ue''})$$
gives an identification from the set of the isomorphism classes of the exterior tensor products of simple perverse sheaves in $\cD^{b,ss}_{\rG_{\ue'},m}(\rC_{\ue'}), \cD^{b,ss}_{\rG_{\ue''},m}(\rC_{\ue''})$ to the set of the isomorphism classes of simple perverse sheaves in $ \cD^{b}_{\rG_{\ue'}\times \rG_{\ue''},m}(\rC_{\ue'}\times \rC_{\ue''})$. As a result, the Grothendieck group of $\cD^{b,ss}_{\rG_{\ue'}\times \rG_{\ue''},m}(\rC_{\ue'}\times \rC_{\ue''})$ can be identified with $\cK^{ss}_{\ue'}\otimes_{\cZ}\cK^{ss}_{\ue''}$.
\end{lemma}
\begin{proof}
By \cite[Lemma 3.3.14]{Pramod-2021}, the external tensor product of simple perverse sheaves is simple. Conversely, any simple perverse sheaf in $\cD^{b,ss}_{\rG_{\ue'}\times \rG_{\ue''},m}(\rC_{\ue'}\times \rC_{\ue''})$ is of the form $\IC(U,\cL)$, where $U\subset \rC_{\ue'}\times \rC_{\ue''}$ is a smooth, locally closed, irreducible, $\rG_{\ue'}\times \rG_{\ue''}$-invariant subset and $\cL$ is an irreducible $\rG_{\ue'}\times \rG_{\ue''}$-equivariant local system on $U$, see \cite[Section 5.2]{Bernstein-Lunts-1994}. By Lemma \ref{Finite orbits}, the subset $U$ can be written as the union of finitely many $\rG_{\ue'}\times \rG_{\ue''}$-orbits. Suppose $\cO$ is the orbit having maximal dimension, then $\cO$ is open dense in $U$, and so $\IC(U,\cL)\cong\IC(\cO,\cL|_{\cO})$. Note that the $\rG_{\ue'}\times \rG_{\ue''}$-orbit $\cO\subset \rC_{\ue'}\times \rC_{\ue''}$ can be written as $\cO_1\times \cO_2$ for some $\rG_{\ue'}$-orbit $\cO_1\subset \rC_{\ue'}$ and $\rG_{\ue''}$-orbit $\cO_2\subset \rC_{\ue'}$.
By similar argument as Lemma \ref{Finite orbits}, the local system $\cL|_{\cO}$ must be constant, that is, $\cL|_{\cO}=\overline{\bbQ}_l|_{\cO}=\overline{\bbQ}_l|_{\cO_1}\boxtimes \overline{\bbQ}_l|_{\cO_2}$, and so 
\begin{align*}
\IC(U,\cL)\cong\IC(\cO,\cL|_{\cO})=&\IC(\cO_1\times \cO_2,\overline{\bbQ}_l|_{\cO_1}\boxtimes \overline{\bbQ}_l|_{\cO_2})\\ \cong&\IC(\cO_1,\overline{\bbQ}_l)\boxtimes \IC(\cO_2,\overline{\bbQ}_l),
\end{align*}
where the last isomorphism follows from \cite[Lemma 3.3.14]{Pramod-2021}, as desired.
\end{proof}

\begin{lemma}
For any $\ue\in \bbN[I]\times \bbN[I]$, we view $\bbC$ as a $\cZ$-module via $v.z=v_qz$, then
\begin{align}\label{dual space isomorphism}
\Hom_{\bbC}(\bbC\otimes_{\cZ}\cK_{\ue}^{ss},\bbC)\cong \bbC\otimes_{\cZ}\cK_{\ue}^{ss,*}.
\end{align}
\end{lemma}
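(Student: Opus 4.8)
The plan is to reduce the statement to the single structural fact that $\cK_{\ue}^{ss}$ is a \emph{finitely generated free} $\cZ$-module, after which everything is formal. Indeed, by Lemma~\ref{Finite orbits} there are only finitely many $\rG_{\ue}$-orbits in $\rC_{\ue}$, so the $\cZ$-basis $\cI_{\ue}=\{I_{M_\bullet}\mid\cO_{M_\bullet}\subset\rC_{\ue}\}$ of $\cK_{\ue}^{ss}$ is finite; write $n=|\cI_{\ue}|$. Consequently $\cK_{\ue}^{ss,*}=\Hom_{\cZ}(\cK_{\ue}^{ss},\cZ)$ is again free of rank $n$, with dual basis $\cI_{\ue}^*$, and both $\bbC\otimes_{\cZ}\cK_{\ue}^{ss}$ and $\bbC\otimes_{\cZ}\cK_{\ue}^{ss,*}$ are $n$-dimensional $\bbC$-vector spaces. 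It then suffices to exhibit the two canonical natural isomorphisms whose composite is the asserted one.

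For the first isomorphism I would invoke the hom--tensor adjunction along the ring homomorphism $\cZ\to\bbC$, $v\mapsto v_q$ (equivalently, the adjunction between extension of scalars $(-)\otimes_{\cZ}\bbC$ and restriction of scalars): for any $\cZ$-module $M$ and any $\bbC$-vector space regarded as $\cZ$-module by restriction there is a natural identification $\Hom_{\bbC}(\bbC\otimes_{\cZ}M,\bbC)\cong\Hom_{\cZ}(M,\bbC)$; applied to $M=\cK_{\ue}^{ss}$ this is exactly the middle expression, which equals $\Hom_{\cZ}(\cK_{\ue}^{ss},\bbC)$ since $\Hom_{\bbC}(\bbC,V)=V$ for any $\bbC$-vector space $V$. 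For the second isomorphism I would use compatibility of the linear dual of a finitely generated free module with base change: the canonical $\bbC$-linear map
\[
\bbC\otimes_{\cZ}\Hom_{\cZ}(M,\cZ)\longrightarrow\Hom_{\cZ}(M,\bbC),\qquad z\otimes f\longmapsto\bigl(m\mapsto z\cdot\overline{f(m)}\bigr),
\]
where $\overline{(\,\cdot\,)}$ denotes the image under $\cZ\to\bbC$, is an isomorphism, because both sides are free $\bbC$-modules of rank $n$ and the map carries $1\otimes(\text{dual basis of }\cI_{\ue})$ to the $\bbC$-linear dual basis. Composing the two isomorphisms with $M=\cK_{\ue}^{ss}$ gives the displayed chain, and under it $I_{M_\bullet}^*\in\bbC\otimes_{\cZ}\cK_{\ue}^{ss,*}$ corresponds to the $\bbC$-linear functional on $\bbC\otimes_{\cZ}\cK_{\ue}^{ss}$ dual to $I_{M_\bullet}$, so the identification is canonical and independent of the choice of basis.

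I do not expect a genuine obstacle: the mathematical content is entirely the finiteness of the orbit set supplied by Lemma~\ref{Finite orbits}, together with the standard facts that finite-rank free modules are reflexive and that their linear duals commute with base change. The only point needing a little care is bookkeeping of the two $\cZ$-module structures in play — the one on $\bbC$ given by $v\mapsto v_q$, and the one on $\cK_{\ue}^{ss}$ given by $v.[L]=[L[-1](-\tfrac{1}{2})]$ — and checking that every arrow in the chain is the canonical one, so that the resulting isomorphism is natural and no choices enter.
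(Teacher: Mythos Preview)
Your proposal is correct and follows essentially the same approach as the paper: the first isomorphism via tensor--hom adjunction, the second via an explicit map together with a dimension count (which you phrase as base change for duals of finite free modules). You are slightly more explicit than the paper in invoking Lemma~\ref{Finite orbits} to secure the finite rank of $\cK_{\ue}^{ss}$, and your identification of the middle term with $\Hom_{\cZ}(\cK_{\ue}^{ss},\bbC)$ tacitly replaces $\Hom_{\cZ}(\bbC,-)$ by $\Hom_{\bbC}(\bbC,-)$, but the paper's own argument is equally informal on this point and the outer isomorphism---which is all that is used downstream---is established correctly.
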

\begin{proof}
On the one hand, by the adjoint isomorphism, we have 
$$\Hom_{\bbC}(\bbC\otimes_{\cZ}\cK_{\ue}^{ss},\bbC)\cong \Hom_{\cZ}(\cK^{ss}_{\ue},\Hom_{\bbC}(\bbC,\bbC))=\Hom_{\cZ}(\cK^{ss}_{\ue},\bbC),$$
see \cite[Theorem 2.76]{Rotman-2009}. On the other hand, there is a morphism 
\begin{align*}
\bbC\otimes_{\cZ}\cK_{\ue}^{ss,*}=\bbC\otimes_{\cZ}\Hom_{\cZ}(\cK_{\ue}^{ss},\cZ)\rightarrow  \Hom_{\cZ}(\cK^{ss}_{\ue},\bbC)
\end{align*}
induced by $(z,f)\mapsto (x\mapsto f(x).z)$ which is an isomorphism, since $\cK_{\ue}^{ss}$ is a free $\cZ$-module of finite rank.
\end{proof}

\begin{theorem}\label{dual algebra}
{\rm{(a)}} All restriction functors $\Res^{\ue}_{\ue',\ue''}$ for $\ue=\ue'+\ue''$ induce a comultiplication 
$$r:\cK^{ss}\rightarrow \cK^{ss}\otimes_{\cZ}\cK^{ss}$$
such that $\cK^{ss}$ is a coassociative coalgebra, and the trace map induces an isomorphism
$$\chi:\bbC\otimes_{\cZ}\cK^{ss}\xrightarrow{\cong} \tilde{\cH}^{\tw}(\cC_2(\cP_q)).$$
{\rm{(b)}} Dually, the map $r$ induces a multiplication 
$$*_r:\cK^{ss,*}\otimes_{\cZ}\cK^{ss,*}\rightarrow \cK^{ss,*}$$
such that $\cK^{ss,*}$ is an associative algebra and the dual of the isomorphism $\chi$ together with isomorphisms {\rm{(\ref{dual space isomorphism})}} induces an algebra isomorphism 
$$\chi^*:\tilde{\cH}^{\tw,*}(\cC_2(\cP_q))\rightarrow \bigoplus_{\ue}\Hom_{\bbC}(\bbC\otimes_{\cZ}\cK_{\ue}^{ss},\bbC)\cong \bbC\otimes_{\cZ}\cK^{ss,*}.$$
\end{theorem}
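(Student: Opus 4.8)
The plan is to deduce Theorem \ref{dual algebra} from the function-theoretic picture of Section \ref{Bridgeland's Hall algebra via functions} via the sheaf-function correspondence, exactly as Theorem \ref{induction algebra} is deduced for the induction side. Part (a): coassociativity of $r$ on $\cK^{ss}$ follows from Proposition \ref{coassociativity} (formulated there for the ambient equivariant derived categories; by Proposition \ref{restriction-hyperbolic} the restriction functors preserve the subcategories of mixed semisimple complexes, so the isomorphism of functors descends to $\cK^{ss}=\bigoplus_\ue\cK^{ss}_\ue$). To see that $r$ lands in $\cK^{ss}\otimes_\cZ\cK^{ss}$ rather than a completion, note that for each fixed $\ue$ only finitely many decompositions $\ue=\ue'+\ue''$ occur and, by Lemma \ref{Finite orbits}, each $\cK^{ss}_{\ue'}$ is a finite-rank free $\cZ$-module; hence $\Res^\ue_{\ue',\ue''}$ of a basis element $I_{M_\bullet}$ is a finite $\cZ$-linear combination of elements $I_{M'_\bullet}\boxtimes I_{N'_\bullet}$, using the identification $\cK^{ss}_{\ue'}\otimes_\cZ\cK^{ss}_{\ue''}\cong$ the Grothendieck group of $\cD^{b,ss}_{\rG_{\ue'},m}(\rC_{\ue'})\boxtimes\cD^{b,ss}_{\rG_{\ue''},m}(\rC_{\ue''})$ supplied by the Lemma preceding Proposition \ref{restriction-hyperbolic}. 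The trace-map isomorphism $\chi:\bbC\otimes_\cZ\cK^{ss}\xrightarrow{\cong}\tilde{\cH}_q(\cC_2(\cP))$ is the same map as in Theorem \ref{induction algebra} (the underlying $\bbZ[\overline{\bbQ}_l^*]$-module $\cK$ becomes, after base change along $v\mapsto v_q=-\sqrt q$, the $\cZ$-module $\cK^{ss}$ because $\cD^{b,ss}_{\rG_\ue,m}(\rC_\ue)$ has the $I_{M_\bullet}$ as a basis and the trace map sends $I_{M_\bullet}\mapsto 1_{\cO_{M_\bullet}}$ up to a power of $v_q$ recording $\dim\cO_{M_\bullet}$); that it intertwines $r$ with the function-level comultiplication $r$ of Section \ref{Bridgeland's Hall algebra via functions} is precisely the second formula of Proposition \ref{chi}. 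Since that $r$ on $\tilde{\cH}_q(\cC_2(\cP))$ is coassociative by Corollary (the three-part one after Proposition \ref{coassociativity}), this both re-proves coassociativity and shows $\chi$ is a coalgebra isomorphism.

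Part (b) is then formal dualization. Applying $\Hom_\cZ(-,\cZ)$ to the coalgebra $\cK^{ss}=\bigoplus_\ue\cK^{ss}_\ue$ (each summand finite free over $\cZ$) turns the comultiplication $r$ into an associative multiplication $*_r$ on the graded dual $\cK^{ss,*}=\bigoplus_\ue\Hom_\cZ(\cK^{ss}_\ue,\cZ)$; coassociativity of $r$ gives associativity of $*_r$, and the counit of the coalgebra (restriction to $\ue=\underline 0$, which by the remark at the end of subsection \ref{Restriction functor} is the identity in the appropriate sense) gives the unit. To identify this algebra with $\tilde{\cH}^*_q(\cC_2(\cP))$, I would dualize the isomorphism $\chi$ of part (a). Here one must be careful about the base ring: $\chi$ is an isomorphism of $\bbC$-algebras after the base change $\bbC\otimes_\cZ-$, so its $\bbC$-linear dual is an isomorphism $\Hom_\bbC(\tilde{\cH}_q(\cC_2(\cP)),\bbC)\cong\bigoplus_\ue\Hom_\bbC(\bbC\otimes_\cZ\cK^{ss}_\ue,\bbC)$, and the left side is by definition $\tilde{\cH}^*_q(\cC_2(\cP))$ while the right side is identified with $\bbC\otimes_\cZ\cK^{ss,*}$ by the Lemma giving the isomorphisms (\ref{dual space isomorphism}). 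One checks on basis elements $1_{\cO_{M_\bullet}}^*\leftrightarrow I^*_{M_\bullet}$ (up to the same power of $v_q$) that the resulting map $\chi^*$ carries the function-level product $*_r$ of the Corollary after Proposition \ref{coassociativity} to the product $*_r$ on $\cK^{ss,*}$ just constructed; this is again immediate from Proposition \ref{chi} together with the naturality of graded duality.

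The routine-but-nontrivial points I would spell out are: (i) the compatibility of Tate twists and shifts under $\chi$ with the $\cZ$-action $v.[L]=[L[-1](-\tfrac12)]$ versus multiplication by $v_q=-\sqrt q$ on the function side — this is exactly Theorem \ref{sheaf-function correspondence}(b), so one only needs to record the sign $v_q=-\sqrt q$ chosen just before Proposition \ref{chi}; (ii) that the exterior-tensor identification $\cK^{ss}_{\ue'}\otimes_\cZ\cK^{ss}_{\ue''}\cong\cK^{ss}(\rC_{\ue'}\times\rC_{\ue''})$ is compatible with the analogous tensor-product identification for invariant functions used in Section \ref{Bridgeland's Hall algebra via functions}, so that ``$\res$ followed by $\chi\otimes\chi$'' equals ``$\chi$ followed by $r$''; and (iii) that graded duality of $\cZ$-modules commutes with $\bbC\otimes_\cZ-$ in the sense made precise by (\ref{dual space isomorphism}). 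I expect the main obstacle to be purely bookkeeping: keeping the several identifications — $\bbZ[\overline{\bbQ}_l^*]$-module $\cK$ versus $\cZ$-module $\cK^{ss}$, the base changes $\bbC\otimes_{\bbZ[\overline{\bbQ}_l^*]}-$ versus $\bbC\otimes_\cZ-$, and the graded-dual versus linear-dual functors — mutually consistent, and tracking the precise power of $v_q$ (equivalently the shift/twist $[\dim\cO](\tfrac{\dim\cO}{2})$ in the definition of $I_{M_\bullet}$) through the dualization so that the basis $\cI^*$ maps to $\{1_{\cO_{M_\bullet}}^*\}$ on the nose. None of this is conceptually hard once part (a) and Proposition \ref{chi} are in hand; the content is entirely in Propositions \ref{coassociativity}, \ref{restriction-hyperbolic}, \ref{chi} and Corollary \ref{function-isomorphism}.
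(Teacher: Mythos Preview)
Your overall strategy matches the paper's: coassociativity on $\cK^{ss}$ from Proposition \ref{coassociativity} combined with Proposition \ref{restriction-hyperbolic}; the trace map is a coalgebra homomorphism by Proposition \ref{chi}; part (b) is formal graded dualization using the Lemma giving (\ref{dual space isomorphism}). That is exactly what the paper does, and your extra bookkeeping remarks (finiteness of the target of $r$, compatibility of the $\cZ$-action with $v_q=-\sqrt{q}$) are correct and helpful.

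There is one genuine imprecision. You assert that the trace map sends $I_{M_\bullet}$ to $1_{\cO_{M_\bullet}}$ up to a power of $v_q$, and dually $I^*_{M_\bullet}\leftrightarrow 1^*_{\cO_{M_\bullet}}$ up to a power of $v_q$. This is false in general: $\IC_{M_\bullet}$ has nonzero stalks on every orbit in $\overline{\cO_{M_\bullet}}$, so $\chi_{I_{M_\bullet}}$ is a linear combination of the $1_{\cO_{N_\bullet}}$ for all $\cO_{N_\bullet}\preccurlyeq\cO_{M_\bullet}$, not a single one (indeed this is exactly why Lemma \ref{image of b} needs the maximality hypothesis). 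Your proposed identification of $\cK^{ss}$ with a base change of $\cK$ is similarly off: $\cK^{ss}$ is the Grothendieck group of the \emph{semisimple} subcategory, and the standard sheaves $S_{M_\bullet}$ are typically not semisimple, so the $\cS$-basis argument from Theorem \ref{induction algebra} does not transfer directly.

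The paper handles bijectivity of $\chi$ more simply, by a dimension count: $\bbC\otimes_\cZ\cK^{ss}_{\ue}$ has $\bbC$-basis $\cI_{\ue}$ and $\tilde{\cH}_{\rG^\sigma_{\ue}}(\rC^\sigma_{\ue})$ has $\bbC$-basis $\{1_{\cO^\sigma_{M_\bullet}}\}$, both parametrized by isomorphism classes of objects of projective dimension pair $\ue$; hence the homomorphism between equidimensional graded pieces is an isomorphism. Alternatively you could rescue your approach by observing that the matrix of $\chi$ with respect to $\cI_{\ue}$ and $\{1_{\cO_{N_\bullet}}\}$ is upper-triangular for the degeneration order with diagonal entries $v_q^{-\dim\cO_{M_\bullet}}$ (since $\IC_{M_\bullet}|_{\cO_{M_\bullet}}=\overline{\bbQ}_l[\dim\cO_{M_\bullet}](\tfrac{\dim\cO_{M_\bullet}}{2})$ and the support is $\overline{\cO_{M_\bullet}}$), hence invertible.
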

\begin{proof}
(a) By Proposition \ref{coassociativity}, Lemma \ref{restriction-hyperbolic} and Lemma \ref{external tensor product}, the $\cZ$-module $\cK^{ss}$ is a coassociative coalgebra. By Proposition \ref{chi}, the trace map induces a homomorphism. Since $\bbC\otimes_{\cZ}\cK_{\ue}^{ss}$ has a $\bbC$-basis $\cI_{\ue}$ and $\tilde{\cH}^{\tw}(\cC_2(\cP_q))$ has a $\bbC$-basis $\{1_{\cO^{\sigma}_{M_\bullet}}\}_{\cO^{\sigma}_{M_\bullet}\subset\rC_{\ue}^{\sigma}}$, both of them are parametrized by isomorphism classes of objects in $\cC_2(\cP)$ of projective dimension vector pair $\ue$, the $\bbN[I]\times \bbN[I]$-graded spaces $\bbC\otimes_{\cZ}\cK^{ss}$ and $\tilde{\cH}^{\tw}(\cC_2(\cP_q))$ have the same graded dimensions. Recall that for any $I_{M_\bullet}\in \cI_{\ue}$, the complex $\IC_{M_\bullet}$ is the unique $\rG_{\ue}$ simple perverse sheaf on $\rC_{\ue}$ supported on $\overline{\cO_{M_\bullet}}$ whose restriction on $\cO_{M_\bullet}$ is $\overline{\bbQ}_l|_{\cO_{M_\bullet}}[\dim \cO_{M_\bullet}](\frac{\dim\cO_{M_\bullet}}{2})$. Thus the homomorphism induced by the trace map corresponds to a upper triangular matrix whose diagonal entries are powers of $v$ with respect to the bases $\cI_{\ue}$ and $\{1_{\cO^{\sigma}_{M_\bullet}}\}_{\cO^{\sigma}_{M_\bullet}\subset\rC_{\ue}^{\sigma}}$. Hence it is an isomorphism.\\
(b) The dual statement follows directly.
\end{proof}

By Corollary \ref{function-isomorphism} and Theorem \ref{dual algebra}, there are algebra isomorphisms
\begin{equation}
\begin{aligned}\label{three algebra isomorphism}
\cH^{\tw}(\cC_2(\cP_q))\xrightarrow{\cong} &\tilde{\cH}^{\tw,*}(\cC_2(\cP_q))\xrightarrow\cong \bbC\otimes_{\cZ}\cK^{ss,*}\\
a_{M_\bullet}u_{[M_\bullet]}\mapsto &\ \ \ \ 1_{\cO_{M_\bullet}}^*\ \ \ \mapsto  \ \ \chi^*(1_{\cO_{M_\bullet}}^*),
\end{aligned}
\end{equation}
where
$$\chi^*(1_{\cO_{M_\bullet}}^*)=\sum_{I_{N_\bullet}^*\in \cI^*}\chi^*(1_{\cO_{M_\bullet}}^*)(I_{N_\bullet})\otimes I_{N_\bullet}^*=\sum_{I_{N_\bullet}^*\in \cI^*}1_{\cO_{M_\bullet}}^*(\chi_{I_{N_\bullet}})\otimes I_{N_\bullet}^*.$$
Here we write the notations $a_{M_\bullet}u_{[M_\bullet]}$ and $1_{\cO_{M_\bullet}}^*$ for convenience and keep in mind that the more precise notations should be $a_{M_{\bullet0}}u_{[M_{\bullet0}]}$ and $1_{\cO_{M_\bullet}^\sigma}^*$ respectively, where $M_{\bullet0}\in \cC_2(\cP_q)$ is an object over $\bbF_q$ such that $M_\bullet\cong k\otimes_{\bbF_q}M_{\bullet0}$, and $\cO_{M_\bullet}^\sigma$ is the fixed point subset of $\cO_{M_\bullet}$ which coincides with the $\rG_{\ue_{M_\bullet}}^\sigma$-orbit of $M_{\bullet0}$ in $\rC_{\ue_{M_\bullet}}^\sigma$. 

\begin{lemma}\label{image of b}
If the orbit $\cO_{M_\bullet}\subset \rC_{\ue}$ is maximal with respect to the partial order $\preccurlyeq$, then under the algebra isomorphisms {\rm{(\ref{three algebra isomorphism})}}, we have 
$$a_{M_\bullet}u_{[M_\bullet]}\mapsto 1_{\cO_{M_\bullet}}^*\mapsto 1\otimes v^{-\dim \cO_{M_\bullet}}I_{M_\bullet}^*.$$
In particular, for any $P,Q\in \cP$, the orbit $\cO_{K_P\oplus K_Q^*}\subset \rC_{\ue_{K_P\oplus K_Q^*}}$ is maximal, and so under the algebra isomorphisms {\rm{(\ref{three algebra isomorphism})}}, we have
\begin{align*}
b_{K_P}*b_{K_Q^*}\mapsto 1_{\cO_{K_P\oplus K_Q^*}}^*\mapsto 1\otimes v^{-\langle \hat{P}+\hat{Q},\hat{P}+\hat{Q}\rangle}I_{K_P\oplus K_Q^*}^*.
\end{align*}
\end{lemma}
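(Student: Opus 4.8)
The plan is to reduce the statement to an orbit-closure computation together with the explicit chain of identifications in \eqref{three algebra isomorphism}. First I would establish the general principle: if $\cO_{M_\bullet}\subset\rC_{\ue}$ is maximal with respect to $\preccurlyeq$, then $\cO_{M_\bullet}$ is open in $\rC_\ue$, so the standard sheaf and the IC sheaf agree on it, giving $I_{M_\bullet}=S_{M_\bullet}$ in $\cK_\ue$; equivalently $\zeta_{M_\bullet M_\bullet}=1$ and no other orbit degenerates to $\cO_{M_\bullet}$. Dually, in $\cK^{ss,*}_\ue$ the basis element $I^*_{M_\bullet}$ is then characterized by the fact that for \emph{every} orbit $\cO_{N_\bullet}$ one has $I^*_{M_\bullet}(\IC(\cO_{N_\bullet},\overline{\bbQ}_l)(\tfrac{\dim\cO_{N_\bullet}}{2}))=\delta_{M_\bullet,N_\bullet}$, and I would unwind the trace map to compute $1^*_{\cO_{M_\bullet}}(\chi_{I_{N_\bullet}})$ directly: $\chi_{I_{N_\bullet}}$ is, by the sheaf-function correspondence (Theorem~\ref{sheaf-function correspondence}(b)) and the normalization in the definition of $I_{N_\bullet}$, equal to $v_q^{-\dim\cO_{N_\bullet}}$ times the IC-function of $\overline{\cO_{N_\bullet}}$, which on the \emph{open} orbit $\cO_{M_\bullet}$ takes the value $v_q^{-\dim\cO_{M_\bullet}}\delta_{M_\bullet,N_\bullet}$ plus contributions only from orbits with $\cO_{M_\bullet}\subset\overline{\cO_{N_\bullet}}$ — but maximality of $\cO_{M_\bullet}$ forces $N_\bullet\cong M_\bullet$. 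Hence $\chi^*(1^*_{\cO_{M_\bullet}})=1\otimes v^{-\dim\cO_{M_\bullet}}I^*_{M_\bullet}$, and composing with the first isomorphism of \eqref{three algebra isomorphism}, which sends $a_{M_\bullet}u_{[M_\bullet]}$ to $1^*_{\cO_{M_\bullet}}$, gives the first displayed formula.

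Next I would treat the special case $M_\bullet=K_P\oplus K_Q^*$. The key input is Bridgeland's decomposition (used already in the proof of Lemma~\ref{Finite orbits}): every $L_\bullet\in\cC_2(\cP)$ splits as $C_X\oplus C_Y^*\oplus K_{P'}\oplus K_{Q'}^*$ with $X=H^0(L_\bullet)$, $Y=H^1(L_\bullet)$. An object $L_\bullet\in\rC_{\ue_{K_P\oplus K_Q^*}}$ has both cohomologies vanishing precisely when it is contractible; among the finitely many orbits in $\rC_{\ue_{K_P\oplus K_Q^*}}$ the contractible ones are exactly those with $H^0=H^1=0$, and $K_P\oplus K_Q^*$ is the \emph{unique} such object of that projective dimension vector pair because a contractible complex is determined up to isomorphism by its underlying projectives in degrees $0$ and $1$ together with the splitting type. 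I would then argue that the contractible locus is open: a complex is contractible iff its differential $d^1\colon P^1\to P^0$ (equivalently the pair $(d^1,d^0)$) is "as nondegenerate as possible", which is an open condition (it is the locus where $d^1$ restricted to a complement of $\ker$ is an iso onto a summand, etc.), so $\cO_{K_P\oplus K_Q^*}$ is open, hence maximal. Applying the first part with $\dim\cO_{K_P\oplus K_Q^*}=\dim\rG_{\ue_{K_P\oplus K_Q^*}}-\dim\Aut_{\cC_2(\cP)}(K_P\oplus K_Q^*)$, and using that for a contractible complex $\End_{\cC_2(\cP)}(K_P\oplus K_Q^*)$ has dimension $\langle \hat P+\hat Q,\hat P+\hat Q\rangle$ while $\dim\rG_{\ue}=\dim\End_\cA(P^1)+\dim\End_\cA(P^0)$, a short Euler-form bookkeeping gives $\dim\cO_{K_P\oplus K_Q^*}=\langle\hat P+\hat Q,\hat P+\hat Q\rangle$, yielding the second displayed formula — once I also check that $b_{K_P}*b_{K_Q^*}$ maps to $1^*_{\cO_{K_P\oplus K_Q^*}}$, which follows from \eqref{three algebra isomorphism} since $u_{[K_P]}\diamond u_{[K_Q^*]}=u_{[K_P\oplus K_Q^*]}$ (there being no nontrivial extensions between $K_P$ and $K_Q^*$ up to the relevant multiplicities) together with the compatibility of $a_{-}$ under direct sum.

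The main obstacle I anticipate is the dimension count $\dim\cO_{K_P\oplus K_Q^*}=\langle\hat P+\hat Q,\hat P+\hat Q\rangle$ and, relatedly, verifying cleanly that $b_{K_P}*b_{K_Q^*}$ lands in $1^*_{\cO_{K_P\oplus K_Q^*}}$ rather than a sum over several orbits: one must rule out that $u_{[K_P]}\diamond u_{[K_Q^*]}$ produces other contractible complexes of the same projective type. This is where I would lean on the uniqueness part of Bridgeland's decomposition lemma and on the computation $\Hom_{\cC_2(\cP)}(K_P,K_Q^*)$ versus $\Ext^1$ via Lemma~\ref{bijection between Ext and Hom}, showing the product is a single orbit times a power of $v_q$; the power then matches $a_{K_P}a_{K_Q^*}/a_{K_P\oplus K_Q^*}$ times the twist $v_q^{\langle\widehat{K_P^1},\widehat{K_Q^{*1}}\rangle+\langle\widehat{K_P^0},\widehat{K_Q^{*0}}\rangle}$, and collapses into the stated exponent after using that $\cO_{K_P\oplus K_Q^*}$ is open so that the coefficient in $\cH_q^{tw}$ is forced. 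All remaining steps — openness of the contractible locus, connectedness of stabilizers (Lemma~\ref{Finite orbits}), and the normalization bookkeeping — are routine given the results already in the excerpt.
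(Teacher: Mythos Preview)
Your core computation for the first part is correct and essentially matches the paper: the point is that $\chi_{I_{N_\bullet}}$ is supported on $\overline{\cO_{N_\bullet}}$, so $1^*_{\cO_{M_\bullet}}(\chi_{I_{N_\bullet}})=0$ unless $\cO_{M_\bullet}\subset\overline{\cO_{N_\bullet}}$, and maximality then forces $N_\bullet\cong M_\bullet$, where the value is $v_q^{-\dim\cO_{M_\bullet}}$. However, your opening claims are both false and should be deleted: a maximal orbit need \emph{not} be open in $\rC_\ue$ (already in the paper's own Example following this lemma, $\cO_{K_P}$ and $\cO_{K_P^*}$ are both maximal in $\rC_{\ue_{K_P}}$ but neither is open), and $I_{M_\bullet}=S_{M_\bullet}$ would require $\cO_{M_\bullet}$ to be \emph{closed}, not maximal. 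Fortunately you never use either claim.

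There is a genuine gap in your argument for the maximality of $\cO_{K_P\oplus K_Q^*}$. Your claim that $K_P\oplus K_Q^*$ is the \emph{unique} contractible object of the given projective dimension vector pair is false: the pair only records $P\oplus Q$ in each degree, so any $K_{P'}\oplus K_{Q'}^*$ with $P'\oplus Q'\cong P\oplus Q$ lies in the same $\rC_\ue$ (e.g.\ $K_{P\oplus Q}$, or $K_Q\oplus K_P^*$, etc.). Thus ``the contractible locus is open'' does not by itself show that the \emph{single} orbit $\cO_{K_P\oplus K_Q^*}$ is open, nor maximal. The paper's argument is instead a semicontinuity-of-rank observation (``any non-isomorphism cannot degenerate to an isomorphism''): if $\cO_{K_P\oplus K_Q^*}\subset\overline{\cO_{L_\bullet}}$ then the ranks of $d^1_L,d^0_L$ must be at least those of $d^1,d^0$ for $K_P\oplus K_Q^*$, forcing $L_\bullet\cong K_P\oplus K_Q^*$. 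Your anticipated difficulty with $b_{K_P}*b_{K_Q^*}$ is not one: the identity $b_{K_P}*b_{K_Q^*}=a_{K_P\oplus K_Q^*}u_{[K_P\oplus K_Q^*]}$ is immediate from \cite[Lemma~3.4]{Bridgeland-2013} (there is a unique extension, and the twist and automorphism factors cancel), and the paper simply invokes it. The dimension count $\dim\cO_{K_P\oplus K_Q^*}=\langle\hat P+\hat Q,\hat P+\hat Q\rangle$ is done in the paper exactly as you outline, via $\rG_{\ue}/\Aut_{\cC_2(\cP)}(K_P\oplus K_Q^*)$.
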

\begin{proof}
By definition, there is no any other orbit $\cO_{M'_\bullet}\subset \rC_{\ue}$ such that $\cO_{M_\bullet}\subset \overline{\cO_{M'_\bullet}}$, and so 
$$\chi^*(1_{\cO_{M_\bullet}}^*)(I_{M'_\bullet})=1_{\cO_{M_\bullet}}^*(\chi_{I_{M'_\bullet}})=0,$$
for any $I_{M'_\bullet}^*\in \cI^*_{\ue}\setminus \{I_{M_\bullet}^*\}$, since the support of $\IC_{M'_\bullet}$ is $\overline{\cO_{M'_\bullet}}$. Thus
$$\chi^*(1_{\cO_{M_\bullet}}^*)=\chi^*(1_{\cO_{M_\bullet}}^*)(I_{M_\bullet})\otimes I_{M_\bullet}^*=v_q^{-\dim \cO_{M_\bullet}}\otimes I_{M_\bullet}^*=1\otimes v^{-\dim \cO_{M_\bullet}}I_{M_\bullet}^*,$$
since the restriction of $\IC_{M_\bullet}$ to $\cO_{M_\bullet}$ is $\overline{\bbQ}_l|_{\cO_{M_\bullet}}[\dim \cO_{M_\bullet}](\frac{\dim \cO_{M_\bullet}}{2})$. 

In particular, for any $P,Q\in \cP$, it is clear that any non-isomorphism can not degenerate to be an isomorphism, and so $\cO_{K_P\oplus K_Q^*}$ is a maximal orbit in $\rC_{\ue_{K_P\oplus K_Q^*}}$. Moreover, since $b_{K_P}*b_{K_Q^*}=a_{K_P\oplus K_Q^*}u_{[K_P\oplus K_Q^*]}$, and the dimension of the orbit
\begin{align*}
\cO_{K_P\oplus K_Q^*}\cong&\rG_{\ue_{K_P\oplus K_Q^*}}/\Stab(\cO_{K_P\oplus K_Q^*})\\
\cong &(\Aut_{\cA}(P\oplus Q)\times \Aut_{\cA}(P\oplus Q))/\Aut_{\cC_2(\cP)}(K_P\oplus K_Q^*)
\end{align*}
is $\langle \hat{P}+\hat{Q},\hat{P}+\hat{Q}\rangle$, the images of $b_{K_P}*b_{K_Q^*}$ are as desired. 
\end{proof}

\begin{example}
For any indecomposable $P\in \cP$, since $\mathbf{Q}$ is a Dynkin quiver, $\End_{\cA}(P)=k$. So there are only three orbits $\cO_{C_P\oplus C_P^*},\cO_{K_P},\cO_{K_P^*}$ in the variety $\rC_{\ue_{K_P}}$. It is clear that $S_{C_P\oplus C_P^*}=I_{C_P\oplus C_P^*}\in \cK^{ss}_{\ue_{K_P}}$, since the orbit $\cO_{C_P\oplus C_P^*}=\{(0,0)\}$ is a single point. Note that 
$$\cO_{K_P}=\{(d,0)|d\in k^*\}\subset \End_{\cA}(P)\times \{0\}\subset\rC_{\ue_{K_P}},$$
where $\End_{\cA}(P)\times \{0\}$ is an irreducible smooth closed subset of $\rC_{\ue_{K_P}}$, and $\cO_{K_P}$ is an open dense subset of $\End_{\cA}(P)\times \{0\}$ with complement $\cO_{C_P\oplus C_P^*}=\{(0,0)\}$. Hence we have 
\begin{align*}
\IC_{K_P}\cong s_!(\overline{\bbQ}_l|_{\End_{\cA}(P)\times \{0\}}[1](\frac{1}{2})),
\end{align*}
where $s:\End_{\cA}(P)\times \{0\}\rightarrow \rC_{\ue_{K_P}}$ is the inclusion, and so $S_{K_P}=I_{K_P}-v^{-1}I_{C_P\oplus C_P^*}\in \cK^{ss}_{\ue_{K_P}}$.
Similarly, we have $S_{K_P^*}=I_{K_P^*}-v^{-1}I_{C_P\oplus C_P^*}\in \cK^{ss}_{\ue_{K_P}}$. Moreover, we have 
\begin{align*}
(S_{C_P\oplus C_P^*},\ S_{K_P},\ S_{K_P^*})=(I_{C_P\oplus C_P^*},\ I_{K_P},\ I_{K_P^*})\left(\begin{smallmatrix}
1 &-v^{-1} &-v^{-1}\\
0 &1 &0\\
0 &0 &1
\end{smallmatrix}\right).
\end{align*}
Thus $\cS_{\ue_{K_P}}=\{S_{C_P\oplus C_P^*},S_{K_P},S_{K_P^*}\}$ is also a $\cZ$-basis of $\cK_{\ue_{K_P}}^{ss}$, and its dual basis is
\begin{align*}
&(I^*_{C_P\oplus C_P^*},\ I^*_{K_P},\ I^*_{K_P^*})\left(\left(\begin{smallmatrix}
1 &-v^{-1} &-v^{-1}\\
0 &1 &0\\
0 &0 &1
\end{smallmatrix}\right)^{-1}\right)^T\\
=&(I^*_{C_P\oplus C_P^*}+v^{-1}I^*_{K_P}+v^{-1}I^*_{K_P^*},\ I^*_{K_P},\ I^*_{K_P^*}).
\end{align*}
The values of $\chi^*(1_{\cO_{K_P}}^*)$ on the basis elements $S_{C_P\oplus C_P^*},S_{K_P},S_{K_P^*}$ are $0,v_q^{-1},0$ respectively, and so $\chi^*(1_{\cO_{K_P}}^*)=v_q^{-1}\otimes I_{K_P}^*=1\otimes v^{-1}I_{K_P}^*$. Similarly, we have $\chi^*(1_{\cO_{K_P^*}}^*)=1\otimes v^{-1} I_{K_P^*}^*$. 
\end{example}

\begin{definition}
For any $P,Q\in \cP$, we define 
$$\tilde{I}^*_{K_P\oplus K_Q^*}=v^{-\langle \hat{P}+\hat{Q},\hat{P}+\hat{Q}\rangle}I^*_{K_P\oplus K_Q^*}\in \cK^{ss,*}_{\ue_{K_P\oplus K_Q^*}}.$$
In particular, $\tilde{I}_{K_P}^*=v^{-\langle \hat{P},\hat{P}\rangle}I_{K_P}^*\in \cK^{ss,*}_{\ue_{K_P}},\ \tilde{I}_{K_Q^*}^*=v^{-\langle \hat{Q},\hat{Q}\rangle}I_{K_Q^*}^*\in \cK^{ss,*}_{\ue_{K_Q^*}}$.
\end{definition}

By Lemma \ref{image of b}, the elements $1\otimes \tilde{I}_{K_P}^*, 1\otimes  \tilde{I}_{K_P^*}^*$ are the images of $b_{K_P}, b_{K_P^*}$ respectively, under the algebra isomorphisms (\ref{three algebra isomorphism}). This inspires us take the localization of $\cK^{ss,*}$ with respect to $\{\tilde{I}_{K_P}^*, \tilde{I}_{K_P^*}^*|P\in \cP\}$, as Bridgeland took the localization of $\cH^{\tw}(\cC_2(\cP_q))$ by with respect to $\{b_{K_P},b_{K_P^*}|P\in \cP_q\}$.

\begin{proposition}\label{Ore_2}
For any $P,Q\in \cP$ and $x\in \cK^{ss,*}_{\ue}$, we have 
\begin{align}
\tilde{I}^*_{K_P}*_rx&=v^{(\hat{P},\hat{P^0}-\hat{P^1})}x*_r\tilde{I}^*_{K_P}, \label{I-Ore}  \\
\tilde{I}^*_{K_Q^*}*_rx&=v^{(-\hat{Q},\hat{P^0}-\hat{P^1})}x*_r\tilde{I}^*_{K_Q^*}, \label{I-Ore'} \\
[\tilde{I}_{K_P}^*,\tilde{I}_{K_Q}^*]&=[\tilde{I}_{K_P^*}^*, \tilde{I}_{K_Q^*}^*]=[\tilde{I}_{K_P}^*, \tilde{I}_{K_Q^*}^*]=0, \label{commutative}
\end{align}
where $P^j\cong\bigoplus_{i\in I}e^j_iP_i$, and $[a, b]=a*_rb-b*_ra$ is the commutator.
\end{proposition}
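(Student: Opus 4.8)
The plan is to transport all three identities to the twisted Hall algebra $\cH_q^{\tw}(\cC_2(\cP))$ through the algebra isomorphisms (\ref{three algebra isomorphism}), where they reduce to a short direct computation resting on the fact that contractible complexes are projective-injective objects of the Frobenius category $\cC_2(\cP)$. First I would dispose of (\ref{commutative}): it is the special case of (\ref{I-Ore}) and (\ref{I-Ore'}) in which $x=\tilde I^*_{K_Q}$ or $x=\tilde I^*_{K_Q^*}$, for then the underlying complex is $K_Q$ or $K_Q^*$, whose two components coincide, so $\hat{P^0}-\hat{P^1}=0$ and the twist scalars in (\ref{I-Ore}), (\ref{I-Ore'}) become trivial. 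Hence it suffices to prove (\ref{I-Ore}) and (\ref{I-Ore'}).

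For (\ref{I-Ore}): since $\cK^{ss,*}_{\ue}$ is free over $\cZ$ with dual basis $\cI^*_{\ue}$, it is torsion-free and embeds in $\bbC\otimes_{\cZ}\cK^{ss,*}_{\ue}$, so the identity may be checked there; both sides are $\cZ$-linear in $x$, and the elements $\chi^*(1^*_{\cO_{M_\bullet}})$ with $\cO_{M_\bullet}\subset\rC_{\ue}$ form a $\bbC$-basis of $\bbC\otimes_{\cZ}\cK^{ss,*}_{\ue}$, being the image of the basis $\{a_{M_\bullet}u_{[M_\bullet]}\}$ of $\cH_q^{\tw}(\cC_2(\cP))$ under (\ref{three algebra isomorphism}). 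So I may take $x=\chi^*(1^*_{\cO_{M_\bullet}})$, which corresponds to $a_{M_\bullet}u_{[M_\bullet]}$; by Lemma \ref{image of b}, $1\otimes\tilde I^*_{K_P}$ corresponds to $b_{K_P}=a_{K_P}u_{[K_P]}$. Since $v$ acts on $\bbC$ as $v_q$, relation (\ref{I-Ore}) is then equivalent to
$$b_{K_P}*(a_{M_\bullet}u_{[M_\bullet]})=v_q^{(\hat P,\,\hat{P^0}-\hat{P^1})}\,(a_{M_\bullet}u_{[M_\bullet]})*b_{K_P}$$
in $\cH_q^{\tw}(\cC_2(\cP))$.

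I would prove this last identity by direct computation. As $K_P$ is projective-injective in $\cC_2(\cP)$, one has $\Ext^1_{\cC_2(\cP)}(K_P,M_\bullet)=\Ext^1_{\cC_2(\cP)}(M_\bullet,K_P)=0$, so by Riedtmann-Peng's formula both $u_{[K_P]}\diamond u_{[M_\bullet]}$ and $u_{[M_\bullet]}\diamond u_{[K_P]}$ are multiples of $u_{[K_P\oplus M_\bullet]}$ whose coefficients differ precisely by the ratio $|\Hom_{\cC_2(\cP)}(M_\bullet,K_P)|/|\Hom_{\cC_2(\cP)}(K_P,M_\bullet)|$. A short diagram chase identifies $\Hom_{\cC_2(\cP)}(K_P,M_\bullet)\cong\Hom_{\cA}(P,M^1)$ and $\Hom_{\cC_2(\cP)}(M_\bullet,K_P)\cong\Hom_{\cA}(M^0,P)$; since $P$, $M^0$, $M^1$ are projective, this ratio equals $v_q^{2\langle\hat{M^0},\hat P\rangle-2\langle\hat P,\hat{M^1}\rangle}$. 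Feeding this into the twisted products — whose twist exponents are $\langle\hat P,\hat{M^0}\rangle+\langle\hat P,\hat{M^1}\rangle$ for $u_{[K_P]}*u_{[M_\bullet]}$ and $\langle\hat{M^0},\hat P\rangle+\langle\hat{M^1},\hat P\rangle$ for $u_{[M_\bullet]}*u_{[K_P]}$ — and using $\hat{M^j}=\hat{P^j}$, all terms collapse to $v_q^{(\hat P,\hat{P^0})-(\hat P,\hat{P^1})}$; clearing the factor $a_{K_P}a_{M_\bullet}$ gives the displayed identity. Relation (\ref{I-Ore'}) follows in the same way with $K_Q^*$ replacing $K_P$, using $\Hom_{\cC_2(\cP)}(K_Q^*,M_\bullet)\cong\Hom_{\cA}(Q,M^0)$ and $\Hom_{\cC_2(\cP)}(M_\bullet,K_Q^*)\cong\Hom_{\cA}(M^1,Q)$, the interchange of $M^0$ and $M^1$ being exactly what produces the sign $-\hat Q$. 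The one genuinely delicate point — the main obstacle — is to verify that the antisymmetric contribution from the twist and the contribution from the $\Hom$-space ratio combine to the symmetric Euler form $(\hat P,\hat{P^0}-\hat{P^1})$ rather than partially cancelling; but this is a bounded bookkeeping check once the four relevant $\Hom$-spaces have been identified.
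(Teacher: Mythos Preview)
Your Hall algebra computation is fine and, in fact, reproduces Bridgeland's \cite[Lemma~3.5]{Bridgeland-2013}, which the paper simply cites. The gap is in the reduction step. You claim that since $\cK^{ss,*}_{\ue}$ is free over $\cZ$, it is torsion-free and therefore embeds in $\bbC\otimes_{\cZ}\cK^{ss,*}_{\ue}$. This is false: the $\cZ$-module structure on $\bbC$ is given by the specialization $v\mapsto v_q$ (see the lemma immediately preceding Theorem~\ref{dual algebra}), and since $v_q=-\sqrt{q}$ is algebraic, the ring map $\cZ=\bbZ[v,v^{-1}]\to\bbC$ is \emph{not} injective (for instance $v^2-q\mapsto 0$). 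Consequently $\cK^{ss,*}_{\ue}\cong\cZ^N\to\bbC^N\cong\bbC\otimes_{\cZ}\cK^{ss,*}_{\ue}$ is evaluation at $v_q$ and has a large kernel. Checking the identity after a single specialization $v=v_q$ does not suffice to conclude it over $\cZ$.

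The paper's proof confronts exactly this issue. It fixes a basis element $x=I^*\in\cI^*_{\ue}$, pairs both sides of (\ref{I-Ore}) against an arbitrary $y\in\cK^{ss}_{\ue+\ue_{K_P}}$, and observes that the resulting equality is between two explicit Laurent polynomials in $v$. Your Hall algebra identity (via the isomorphisms (\ref{three algebra isomorphism})) shows these polynomials agree at $v=v_q$; replacing the Frobenius $\sigma$ by $\sigma^n$ for each $n\geqslant 1$ gives agreement at $v=v_q^n$ for infinitely many $n$, and two Laurent polynomials agreeing at infinitely many points coincide. This ``vary the Frobenius'' trick is the missing ingredient in your argument; once you insert it, your approach and the paper's coincide.
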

\begin{proof}
It is clear that the formula (\ref{I-Ore}) is equivalent to the following formula
\begin{align}\label{equivalent Ore}
I^*_{K_P}*_rx=v^{(\hat{P},\hat{P^0}-\hat{P^1})}x*_rI^*_{K_P}.
\end{align}
We only need to prove it in the case that $x=I^*\in \cI_{\ue}^*$ is a basis element. For any $y\in \cK_{\ue+\ue_{K_P}}$, suppose that $r(y)=\sum \zeta_{I_1,I_2}(v)I_1\otimes I_2$, where $I_1,I_2\in \cI$ and $\zeta_{I_1,I_2}(v)\in \cZ$. By definition, we have
\begin{align*}
I_{K_P}^**_rI^*(y)=(I_{K_P}^*\otimes I^*)(r(y))=\zeta_{I_{K_P},I}(v),\\
I^**_rI_{K_P}^*(y)=(I^*\otimes I_{K_P}^*)(r(y))=\zeta_{I,I_{K_P}}(v).
\end{align*}
By \cite[Lemma 3.5]{Bridgeland-2013}, the elements $b_{K_P}$ satisfies
$$b_{K_P}*u_{[M_\bullet]}=v_q^{(\hat{P},\hat{M^0}-\hat{M^1})}u_{[M_\bullet]}*b_{K_P}\in \cH^{\tw}(\cC_2(\cP_q)).$$
By the algebra isomorphisms (\ref{three algebra isomorphism}), the formula (\ref{equivalent Ore}) holds for $v=v_q$ in $\bbC\otimes_{\cZ}\cK^{ss,*}$. Replacing the Frobenius morphism $\sigma$ by its power $\sigma^n$ for $n\geqslant 1$, the formula (\ref{equivalent Ore}) holds for $v=v_q^n$ in $\bbC\otimes_{\cZ}\cK^{ss,*}$, that is, 
$$1\otimes (I_{K_P}^**_rI^*)=(v_q^n)^{(\hat{P},\hat{P^0}-\hat{P^1})}\otimes (I^**_rI_{K_P}^*).$$
In particular, taking their values at $y$, we obtain
$$\zeta_{I_{K_P},I}(v_q^n)=(v_q^n)^{(\hat{P},\hat{P^0}-\hat{P^1})}\zeta_{I,I_{K_P}}(v_q^n),$$
Hence the Laurent polynomials $\zeta_{I_{K_P},I}(v),\ v^{(\hat{P},\hat{P^0}-\hat{P^1})}\zeta_{I,I_{K_P}}(v)\in \cZ$ take the same values in infinitely many $v=v_q^n$ for $n\geqslant 1$, which implies that they are equal to each other, and  this finish the proof of formula (\ref{equivalent Ore}). Similarly, we can prove $I^*_{K_P^*}*_rx=v^{-(\hat{P},\hat{P^0}-\hat{P^1})}x*_rI^*_{K_P^*}$. The formulas (\ref{commutative}) follows (\ref{I-Ore}) and (\ref{I-Ore'}) directly.
\end{proof}

\begin{theorem}\label{Bridgeland's algebra via perverse sheaf}
The subset 
$\{\tilde{I}_{K_P}^*, \tilde{I}_{K_P^*}^*|P\in \cP\}\subset \cK^{ss,*}$
satisfies the Ore conditions, and so there is a well-defined localization
$$\cD\cK^{ss,*}=\cK^{ss,*}[(\tilde{I}_{K_P}^*)^{-1}, (\tilde{I}_{K_P^*}^*)^{-1}|P\in \cP]$$
with a reduced quotient 
$$\cD\cK^{ss,*,\red}=\cD\cK^{ss,*}/\langle \tilde{I}_{K_P}^**_r\tilde{I}_{K_P^*}^*-1|P\in \cP\rangle$$
such that the algebra isomorphisms (\ref{three algebra isomorphism}) induce algebra isomorphisms
\begin{align*}
\cD\cH(\cA_q)\cong \cD\tilde{\cH}^*(&\cA_q)\cong \bbC\otimes_{\cZ}\cD\cK^{ss,*},\\
\cD\cH^{\red}(\cA_q)\cong \cD\tilde{\cH}^{*,\red}(&\cA_q)\cong \bbC\otimes_{\cZ}\cD\cK^{ss,*,\red}.
\end{align*}
\end{theorem}
\begin{proof}
By Proposition \ref{Ore_2}, the subset 
$\{\tilde{I}_{K_P}^*, \tilde{I}_{K_P^*}^*|P\in \cP\}$
satisfies the Ore conditions. By Lemma \ref{image of b}, the algebra isomorphisms (\ref{three algebra isomorphism}) identify the subsets 
$$\{b_{K_P},b_{K_P^*}|P\in \cP\},\ \{1_{\cO_{K_P}}^*,1_{\cO_{K_P^*}}^*|P\in \cP\},\ \{1\otimes \tilde{I}_{K_P}^*, 1\otimes\tilde{I}_{K_P^*}^*|P\in \cP\},$$ and so they induce algebra an isomorphism between the localizations and an isomorphism between the reduced quotients.
\end{proof}

Replacing the Frobenius morphism $\sigma$ by its power $\sigma^n$ for $n\geqslant 1$, we can obtain similar results over the finite field $\bbF_{q^n}$. This completes a realization of Bridgeland's Hall algebra via perverse sheaves.

\section{Bases}\label{Basis}

\subsection{Bases of $\cK^{ss}$ and $\cK^{ss,*}$}

\begin{definition}\label{bar-involution}
(a) The bar-involution on the Laurent polynomial ring $\cZ$ is defined to be the $\bbZ$-linear isomorphism interchanging $v$ and $v^{-1}$, denoted by $\overline{\zeta(v)}=\zeta(v^{-1})$.\\
(b) The bar-involution on $\cK^{ss}$ is induced by the Verdier dual $\bbD$, denoted by $\bar{[L]}=[\bbD L]$, which is compatible with the $\cZ$-module structure and the bar-involution on $\cZ$.\\
(c) The bar-involution on $\cK^{ss,*}$ is induced by the bar-involutions on $\cZ$ and $\cK^{ss}$, that is, for any $f\in \cK^{ss,*},x\in \cK^{ss}$, we have 
$\bar{f}(x)=\overline{f(\bar{x})}$.
\end{definition}

\begin{lemma}\label{bar}
For any $x\in \cK^{ss}$, if $r(x)=\sum x_1\otimes x_2$, where $x_1,x_2$ are homogeneous of degree $|x_1|,|x_2|$, we have
$r(\bar{x})=\sum v^{-\Vert|x_1|,|x_2|\Vert}\overline{x_2}\otimes \overline{x_1}$. 
Dually, for any $y_1,y_2\in \cK^{ss,*}$ which are homogeneous of degree $|y_1|,|y_2|$, we have 
$\overline{y_1*_ry_2}=v^{\Vert|y_1|,|y_2|\Vert}\overline{y_2}*_r\overline{y_1}$.
\end{lemma}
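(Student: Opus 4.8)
The plan is to prove part (a) by tracking how the Verdier dual interacts with the hyperbolic localization description of the restriction functor, and then to obtain part (b) by formal dualization. First I would fix $\ue = \ue' + \ue''$ and note that, by Corollary \ref{Verdier}, for any $C \in \cD^{b,ss}_{\rG_{\ue},m}(\rC_{\ue})$ one has
\begin{equation*}
(\bbD\boxtimes\bbD)\Res^{\ue}_{\ue',\ue''}(\bbD C)\cong\tau_!\Res^{\ue}_{\ue'',\ue'}(C)[-\Vert\ue',\ue''\Vert](-\tfrac{\Vert\ue',\ue''\Vert}{2}).
\end{equation*}
Passing to Grothendieck groups, applying the shift-twist conventions $v.[L]=[L[-1](-\tfrac12)]$ and unwinding the definition of the comultiplication $r$ from the various $\Res^{\ue}_{\ue',\ue''}$, the left side becomes the bar-involution (on $\cK^{ss}$ and on $\cK^{ss}\otimes_{\cZ}\cK^{ss}$, via $\bbD\boxtimes\bbD$) applied to the $(\ue',\ue'')$-component of $r(x)$, with $x=[C]$; the right side becomes $\tau$ (the coordinate swap) applied to the $(\ue'',\ue')$-component of $r(\bar x)$, rescaled by $v^{-\Vert\ue',\ue''\Vert}$. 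In symbols, if $r(x)=\sum x_1\otimes x_2$ with $x_i$ homogeneous, then the $(\ue',\ue'')$-graded piece of $\overline{r(x)}$ (using $\bbD\boxtimes\bbD$ on each factor) equals $v^{-\Vert\ue',\ue''\Vert}$ times the swap of the $(\ue',\ue'')$-piece of $r(\bar x)$; reading this across all bidegrees and applying $\tau$ once more yields exactly
\begin{equation*}
r(\bar x)=\sum v^{-\Vert|x_1|,|x_2|\Vert}\,\overline{x_2}\otimes\overline{x_1},
\end{equation*}
which is (a).

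For part (b), I would argue purely formally from (a) and Definition \ref{bar-involution}(c). Take homogeneous $y_1,y_2\in\cK^{ss,*}$ and an arbitrary homogeneous $x\in\cK^{ss}$; write $r(x)=\sum x_1\otimes x_2$. By the definition of $*_r$ as the dual of $r$, we have $(y_1*_ry_2)(x)=\sum y_1(x_1)y_2(x_2)$. Then compute $\overline{y_1*_ry_2}(x)=\overline{(y_1*_ry_2)(\bar x)}$; substitute the formula from (a) for $r(\bar x)$, pull the bar through the sum (using $\overline{\zeta(v)}=\zeta(v^{-1})$ and that the bar on $\cK^{ss,*}$ satisfies $\overline{y}(z)=\overline{y(\bar z)}$), and collect the scalar $v^{\Vert|x_1|,|x_2|\Vert}$; since $y_1,y_2$ are homogeneous of degrees $|y_1|,|y_2|$, only terms with $|x_2|=|y_1|$, $|x_1|=|y_2|$ contribute, so the scalar is the constant $v^{\Vert|y_1|,|y_2|\Vert}$ and the surviving sum is precisely $(\overline{y_2}*_r\overline{y_1})(x)$. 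As $x$ was arbitrary this gives $\overline{y_1*_ry_2}=v^{\Vert|y_1|,|y_2|\Vert}\,\overline{y_2}*_r\overline{y_1}$.

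The main obstacle I anticipate is part (a): specifically, bookkeeping the Tate twists, shifts, and the coordinate-swap $\tau$ correctly when translating Corollary \ref{Verdier} into the Grothendieck-group statement. One must be careful that the symmetric form $\Vert\ue',\ue''\Vert=|\ue',\ue''|+|\ue'',\ue'|$ arises from combining the two asymmetric normalizations in $\Res^{\ue}_{\ue',\ue''}$ and $\Res^{\ue}_{\ue'',\ue'}$, that $\bbD\boxtimes\bbD$ is genuinely the tensor-product bar-involution on $\cK^{ss}\otimes_{\cZ}\cK^{ss}$ (this uses the Künneth-type equivalence $\cD^{b,ss}_{\rG_{\ue'}\times\rG_{\ue''},m}(\rC_{\ue'}\times\rC_{\ue''})\simeq\cD^{b,ss}_{\rG_{\ue'},m}(\rC_{\ue'})\boxtimes\cD^{b,ss}_{\rG_{\ue''},m}(\rC_{\ue''})$ established earlier), and that $\tau_!=\tau^*$ for the isomorphism $\tau$ so that applying it twice is the identity. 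Part (b) should then be a routine dualization with no real difficulty beyond keeping the homogeneity constraints straight.
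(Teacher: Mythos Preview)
Your proposal is correct and follows essentially the same approach as the paper: part (a) is obtained directly from Corollary \ref{Verdier} by passing to Grothendieck groups and tracking the shift--twist normalization and the swap $\tau$, and part (b) is the formal dualization you describe, evaluating both sides on an arbitrary homogeneous $x$ and using the homogeneity constraint $|x_2|=|y_1|$, $|x_1|=|y_2|$ together with the symmetry of $\Vert-,-\Vert$ to identify the scalar. The paper's own proof is terser (it simply says part (a) ``follows from Corollary \ref{Verdier} directly'' and then writes out the two evaluation computations for part (b)), but the content is the same.
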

\begin{proof}
The first statement follows from Corollary \ref{Verdier} directly. We prove the dual statement. For any $x\in \cK^{ss}$, suppose $r(x)=\sum x_1\otimes x_2$, where $x_1,x_2$ are homogeneous, we have
\begin{align*}
&\overline{y_1*_ry_2}(x)=\overline{y_1*_ry_2(\bar{x})}=\overline{(y_1\otimes y_2)(r(\bar{x})})\\
=&\sum v^{\Vert|x_1|,|x_2|\Vert}\overline{y_1(\overline{x_2})y_2(\overline{x_1})}=v^{\Vert|y_1|,|y_2|\Vert}\sum \overline{y_1(\overline{x_2})y_2(\overline{x_1})},\\
&v^{\Vert|y_1|,|y_2|\Vert}\overline{y_2}*_r\overline{y_1}(x)=v^{\Vert|y_1|,|y_2|\Vert}(\overline{y_2}\otimes\overline{y_1})(r(x))\\
=&v^{\Vert|y_1|,|y_2|\Vert}\sum\overline{y_2}(x_1)\overline{y_1}(x_2)=v^{\Vert|y_1|,|y_2|\Vert}\sum \overline{y_2(\overline{x_1})}\overline{y_1(\overline{x_2})},
\end{align*}
where we use $y_1(\overline{x_2})y_2(\overline{x_1})=0$ unless $|x_1|=|y_2|,|x_2|=|y_1|$, as desired.
\end{proof}

\begin{proposition}\label{positivity}
The $\cZ$-basis $\cI$ of $\cK^{ss}$ is bar-invariant and it has positivity. Dually, the $\cZ$-basis $\cI^*$ of $\cK^{ss,*}$ is bar-invariant and it has positivity. More precisely, we have $\overline{I_{M_{\bullet}}}=I_{M_{\bullet}}, \overline{I^*_{M_{\bullet}}}=I^*_{M_{\bullet}}$, and if 
\begin{align*}
r(I_{L_\bullet})=\sum_{M_\bullet, N_\bullet} \zeta^{L_\bullet}_{M_\bullet,N_\bullet}(v)I_{M_\bullet}\otimes I_{N_\bullet},\ I^*_{M_{\bullet}}*_rI^*_{N_{\bullet}}=\sum_{L_{\bullet}}\xi^{L_{\bullet}}_{M_{\bullet},N_{\bullet}}(v)I^*_{L_{\bullet}}
\end{align*}
then $\zeta^{L_{\bullet}}_{M_{\bullet},N_{\bullet}}(v), \xi^{L_{\bullet}}_{M_{\bullet}N_{\bullet}}(v)\in \bbN[v,v^{-1}]$. Moreover, we have 
$$\xi^{L_{\bullet}}_{M_\bullet N_\bullet}(v)=\zeta^{L_{\bullet}}_{M_\bullet N_\bullet}(v)=v^{-\Vert\ue_{M_\bullet},\ue_{N_\bullet}\Vert}\zeta^{L_{\bullet}}_{N_\bullet M_\bullet}(v^{-1}).$$
\end{proposition}
\begin{proof}
By \cite[Lemma 3.3.13]{Pramod-2021}, we have 
\begin{align*}
&\bbD\IC(\cO_{M_\bullet},\overline{\bbQ}_l)\cong \IC(\cO_{M_\bullet},\overline{\bbQ}_l)(\dim \cO_{M_\bullet}),\\ 
\bbD&(\IC_{M_\bullet})\cong \bbD\IC(\cO_{M_\bullet},\overline{\bbQ}_l)(-\frac{\dim \cO_{M_{\bullet}}}{2})\cong \IC_{M_\bullet},
\end{align*}
thus $\overline{I_{M_{\bullet}}}=I_{M_{\bullet}}$ and then $\overline{I^*_{M_{\bullet}}}=I^*_{M_{\bullet}}$, since $$\overline{I^*_{M_{\bullet}}}(I_{N_{\bullet}})=\overline{I^*_{M_{\bullet}}(\overline{I_{N_{\bullet}}})}=\overline{I^*_{M_{\bullet}}(I_{N_{\bullet}})}=\delta_{M_\bullet, N_\bullet}.$$ The positivity of $\cI$ follows from our definition in the framework of perverse sheaves, the coefficients of $\zeta^{L_{\bullet}}_{M_\bullet N_\bullet}(v)\in\cZ$ are dimensions of some $\overline{\bbQ}_l$-vector spaces. By definition, 
\begin{align*}
&\xi^{L_{\bullet}}_{M_\bullet, N_\bullet}(v)=I^*_{M_{\bullet}}*_rI^*_{N_{\bullet}}(I_{L_{\bullet}})=(I^*_{M_{\bullet}}\otimes I^*_{N_{\bullet}})(r(I_{L_{\bullet}}))\\
=&(I^*_{M_{\bullet}}\otimes I^*_{N_{\bullet}})(\sum_{M_\bullet,N_\bullet}\zeta^{L_{\bullet}}_{M_\bullet, N_\bullet}(v)I_{M_{\bullet}}\otimes I_{N_{\bullet}})=\zeta^{L_{\bullet}}_{M_\bullet, N_\bullet}(v)\in \bbN[v,v^{-1}].
\end{align*}
By $r(I_{L_\bullet})=r(\overline{I_{L_\bullet}})$ and Lemma \ref{bar}, we have 
$$\sum_{M_\bullet,N_\bullet}\zeta^{L_{\bullet}}_{M_\bullet N_\bullet}(v)I_{M_{\bullet}}\otimes I_{N_{\bullet}}=\sum_{M_\bullet,N_\bullet}v^{-\Vert\ue_{M_\bullet},\ue_{N_\bullet}\Vert}\overline{\zeta^{L_{\bullet}}_{M_\bullet N_\bullet}(v)}I_{N_{\bullet}}\otimes I_{M_{\bullet}},$$
and so $\zeta^{L_{\bullet}}_{M_\bullet N_\bullet}(v)=v^{-\Vert\ue_{M_\bullet},\ue_{N_\bullet}\Vert}\overline{\zeta^{L_{\bullet}}_{N_\bullet M_\bullet}(v)}=v^{-\Vert\ue_{M_\bullet},\ue_{N_\bullet}\Vert}\zeta^{L_{\bullet}}_{N_\bullet M_\bullet}(v^{-1})$.
\end{proof}

\subsection{Bases of $\cD\cK^{ss,*}$ and $\cD\cK^{ss,*,\red}$}

For any $x\in \cK^{ss,*}$, we still denote by $x$ its images in the localization $\cD\cK^{ss,*}$ and the reduced quotient $\cD\cK^{ss,*,\red}$.

\begin{lemma}\label{maximal orbit induction}
For any $P,Q\in \cP$ and $\cO_{M_\bullet}\subset \rC_{\ue}$, if the orbit $\cO_{K_P\oplus K_Q^*\oplus M_\bullet}\subset \rC_{\ue_{K_P\oplus K_Q^*}+\ue}$ is maximal with respect to the partial order $\preccurlyeq$, then $\cO_{M_\bullet}\subset\rC_{\ue}$ is also maximal. Moreover, we have 
\begin{equation}
\begin{aligned}\label{maximal orbit formula}
\tilde{I}^*_{K_P\oplus K_Q^*}*_rv^{-\dim \cO_{M_\bullet}}I^*_{M_\bullet}=v^{\langle \hat{P}-\hat{Q},\hat{M^0}-\hat{M^1}\rangle}v^{-\dim \cO_{K_P\oplus K_Q^*\oplus M_\bullet}}I^*_{K_P\oplus K_Q^*\oplus M_\bullet},
\end{aligned}
\end{equation}
in particular, we have 
\begin{align*}
&\tilde{I}_{K_P}^**_r\tilde{I}_{K_Q}^*=\tilde{I}_{K_P\oplus K_Q}^*,\ \tilde{I}_{K_P^*}^**_r\tilde{I}_{K_Q^*}^*=\tilde{I}_{K_P^*\oplus K_Q^*}^*,\\ 
&\tilde{I}_{K_P}^**_r\tilde{I}_{K_Q^*}^*=\tilde{I}_{K_Q^*}^**_r\tilde{I}_{K_P}^*=\tilde{I}_{K_P\oplus K_Q^*}^*,
\end{align*}
in $\cK^{ss,*}$, and also in $\cD\cK^{ss,*},\cD\cK^{ss,*,\red}$.
\end{lemma}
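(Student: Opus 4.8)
The plan is to reduce everything to the single formula \eqref{maximal orbit formula}, from which all the remaining identities follow by specializing $P,Q$ and using the already-established commutativity relations of Proposition~\ref{Ore_2}. First I would establish the claim that maximality of $\cO_{K_P\oplus K_Q^*\oplus M_\bullet}$ in $\rC_{\ue_{K_P\oplus K_Q^*}+\ue}$ forces maximality of $\cO_{M_\bullet}$ in $\rC_\ue$. This is a purely order-theoretic observation: if $\cO_{M_\bullet}$ could degenerate to some $\cO_{M'_\bullet}$ with $M'_\bullet\not\cong M_\bullet$, then taking direct sums with the fixed contractible complex $K_P\oplus K_Q^*$ would exhibit a nontrivial degeneration $\cO_{K_P\oplus K_Q^*\oplus M_\bullet}\succ\cO_{K_P\oplus K_Q^*\oplus M'_\bullet}$ (degeneration is preserved under taking direct sums with a fixed object, since one can embed the smaller orbit into the closure of the larger by acting with a curve supported on the $M_\bullet$-part), contradicting maximality. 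So $\cO_{M_\bullet}$ is maximal, and hence by Lemma~\ref{image of b} we already know $v^{-\dim\cO_{M_\bullet}}I^*_{M_\bullet}$ is the image of $a_{M_\bullet}u_{[M_\bullet]}$ under the isomorphisms \eqref{three algebra isomorphism}, and similarly $\tilde{I}^*_{K_P\oplus K_Q^*}$ is the image of $b_{K_P}*b_{K_Q^*}$.

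Next I would transport the identity to the Hall-algebra side. Under the algebra isomorphisms \eqref{three algebra isomorphism}, the left-hand side of \eqref{maximal orbit formula} corresponds to $(b_{K_P}*b_{K_Q^*})*(a_{M_\bullet}u_{[M_\bullet]})$ in $\cH_q^{tw}(\cC_2(\cP))$. Since $K_P\oplus K_Q^*$ is contractible (hence projective-injective in the Frobenius category $\cC_2(\cP)$), the product $u_{[K_P\oplus K_Q^*]}\diamond u_{[M_\bullet]}$ in the untwisted Hall algebra has no cross-extensions: $\Ext^1_{\cC_2(\cP)}(K_P\oplus K_Q^*,M_\bullet)=0$, so the only term is $u_{[K_P\oplus K_Q^*\oplus M_\bullet]}$ with coefficient $g^{K_P\oplus K_Q^*\oplus M_\bullet}_{K_P\oplus K_Q^*,\,M_\bullet}=a_{K_P\oplus K_Q^*\oplus M_\bullet}/(a_{K_P\oplus K_Q^*}a_{M_\bullet})$ by Riedtmann–Peng. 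The twist factor is $v_q^{\langle \hat P^1\oplus \hat Q^0,\ \hat M^1\rangle+\langle \hat P^0\oplus \hat Q^1,\ \hat M^0\rangle}$ in the conventions of subsection~\ref{Hall algebra for the category of two-periodic complexes}; a short computation with the Euler form of $\cA$, using that $P,Q$ are projective so that $\langle\hat P,\hat X\rangle=\dim\Hom(P,X)$, rewrites this exponent as $\langle\hat P-\hat Q,\ \hat M^0-\hat M^1\rangle$ after absorbing the $a$-factors into the normalization $\tilde I^*_{K_P\oplus K_Q^*}=v^{-\langle\hat P+\hat Q,\hat P+\hat Q\rangle}I^*_{K_P\oplus K_Q^*}$ and using $\dim\cO_{K_P\oplus K_Q^*\oplus M_\bullet}-\dim\cO_{M_\bullet}-\langle\hat P+\hat Q,\hat P+\hat Q\rangle$ equals the contribution of the $\Hom$-spaces appearing in the twist. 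Comparing the resulting scalar with $v^{\langle\hat P-\hat Q,\hat M^0-\hat M^1\rangle}v^{-\dim\cO_{K_P\oplus K_Q^*\oplus M_\bullet}}$ on the right-hand side then gives \eqref{maximal orbit formula}, first at $v=v_q$, then for all $v=v_q^n$ by replacing $\sigma$ with $\sigma^n$, and hence as an identity of Laurent polynomials in $\cZ$ exactly as in the proof of Proposition~\ref{Ore_2}.

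Finally, the three families of displayed identities are the special cases $M_\bullet=K_Q$ (with $Q$ in the role of the ``$M$'' and taking $P,Q$ appropriately), $M_\bullet=K_Q^*$, etc.; in each case the ``extra'' exponent $\langle\hat P-\hat Q,\hat{M^0}-\hat{M^1}\rangle$ vanishes because for a contractible complex $K_R$ one has $\hat{M^0}-\hat{M^1}=\hat R-\hat R=0$ (and likewise for $K_R^*$), so the twist disappears and one gets simply $\tilde I^*_{K_P}*_r\tilde I^*_{K_Q}=\tilde I^*_{K_P\oplus K_Q}$, and similarly for the other pairs; the commutativity $\tilde I^*_{K_P}*_r\tilde I^*_{K_Q^*}=\tilde I^*_{K_Q^*}*_r\tilde I^*_{K_P}$ is then immediate either from \eqref{maximal orbit formula} applied both ways or directly from \eqref{commutative}. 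Since these are equalities in $\cK^{ss,*}$ they descend automatically to $\cD\cK^{ss,*}$ and $\cD\cK^{ss,*,\red}$. The main obstacle I anticipate is bookkeeping the twist exponents: getting the Euler-form identity that converts the $\cC_2(\cP)$-twist plus the dimension-of-orbit normalization into the clean expression $\langle\hat P-\hat Q,\hat{M^0}-\hat{M^1}\rangle$ requires care with the two gradings $j\in\bbZ_2$ and with the fact that $\Ext^1_{\cC_2(\cP)}(K_P\oplus K_Q^*,M_\bullet)=0$ while $\Hom$ is nonzero; everything else is formal consequence of Lemma~\ref{image of b} and the density/positivity already proved.
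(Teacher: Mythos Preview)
Your proposal follows the paper's proof essentially verbatim: maximality by the direct-sum-with-contractible argument, then transporting \eqref{maximal orbit formula} through the isomorphisms \eqref{three algebra isomorphism} to the Hall-algebra identity $(b_{K_P}*b_{K_Q^*})*(a_{M_\bullet}u_{[M_\bullet]})=v_q^{\langle\hat P-\hat Q,\hat{M^0}-\hat{M^1}\rangle}a_{K_P\oplus K_Q^*\oplus M_\bullet}u_{[K_P\oplus K_Q^*\oplus M_\bullet]}$ (which the paper simply cites as \cite[Lemma~3.4]{Bridgeland-2013} rather than re-deriving), and concluding by the $v=v_q^n$ density trick. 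One correction: your degeneration direction is backwards---non-maximality of $\cO_{M_\bullet}$ means $\cO_{M_\bullet}\subset\overline{\cO_{M'_\bullet}}$ for some $M'_\bullet\not\cong M_\bullet$, which yields $\cO_{K_P\oplus K_Q^*\oplus M_\bullet}\prec\cO_{K_P\oplus K_Q^*\oplus M'_\bullet}$ (not $\succ$), and \emph{that} is what contradicts maximality upstairs.
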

\begin{proof}
Assume there exists another orbit $\cO_{M'_\bullet}\not=\cO_{M_\bullet}$ such that $\cO_{M_\bullet}\subset \overline{\cO_{M'_\bullet}}$, then it is clear that $\cO_{K_P\oplus K_Q^*\oplus M_\bullet}\subset \overline{\cO_{K_P\oplus K_Q^*\oplus M'_\bullet}}$, which is a contradiction to the maximality of $\cO_{K_P\oplus K_Q^*\oplus M_\bullet}$. For any $I\in \cI_{\ue_{K_P\oplus K_Q^*}+\ue}$, suppose 
\begin{align*}
(\tilde{I}_{K_P\oplus K_Q^*}^**_rv^{-\dim \cO_{M_\bullet}}I^*_{M_\bullet})(I)&=\zeta_1(v),\\
v^{-\dim \cO_{K_P\oplus K_Q^*\oplus M_\bullet}}I^*_{K_P\oplus K_Q^*\oplus M_\bullet}(I)&=\zeta_2(v).
\end{align*}
By Lemma \ref{image of b}, the elements 
$$1\otimes \tilde{I}_{K_P\oplus K_Q^*}^*,\ 1\otimes v^{-\dim \cO_{M_\bullet}}I_{M_\bullet}^*,\ 1\otimes v^{-\dim \cO_{K_P\oplus K_Q^*\oplus M_\bullet}}I^*_{K_P\oplus K_Q^*\oplus M_\bullet}$$ are the images of $$b_{K_P}*b_{K_Q^*}=a_{K_P\oplus K_Q^*}u_{[K_P\oplus K_Q^*]},\ a_{M_\bullet}u_{[M_\bullet]},\ a_{K_P\oplus K_Q^*\oplus M_\bullet}u_{[K_P\oplus K_Q^*\oplus M_\bullet]}$$ respectively, under the algebra isomorphisms (\ref{three algebra isomorphism}). In the algebra $\cH_q^{\tw}(\cC_2(\cP))$, we have 
$$(b_{K_P}*b_{K_Q^*})*(a_{M_\bullet}u_{[M_\bullet]})=v_q^{\langle \hat{P}-\hat{Q},\hat{M^0}-\hat{M^1}\rangle}(a_{K_P\oplus K_Q^*\oplus M_\bullet}u_{[K_P\oplus K_Q^*\oplus M_\bullet]}),$$
see \cite[Lemma 3.4]{Bridgeland-2013}. Hence the formula (\ref{maximal orbit formula}) holds for $v=v_q$ in $\bbC\otimes_{\cZ}\cK^{ss,*}$. Replacing the Frobenius morphism $\sigma$ by its power $\sigma^n$ for $n\geqslant 1$, the formula (\ref{maximal orbit formula}) holds for $v=v_q^n$ in $\bbC\otimes_{\cZ}\cK^{ss,*}$, that is, 
\begin{align*}
1\otimes (\tilde{I}_{K_P\oplus K_Q^*}^**_rv^{-\dim \cO_{M_\bullet}}I^*_{M_\bullet})
=(v_q^n)^{\langle \hat{P}-\hat{Q},\hat{M^0}-\hat{M^1}\rangle}\otimes v^{-\dim \cO_{K_P\oplus K_Q^*\oplus M_\bullet}}I^*_{K_P\oplus K_Q^*\oplus M_\bullet}.
\end{align*}
Taking their values at $I$, we obtain 
$$\zeta_1(v_q^n)=(v_q^n)^{\langle \hat{P}-\hat{Q},\hat{M^0}-\hat{M^1}\rangle}\zeta_2(v_q^n).$$
Hence the Laurent polynomials $\zeta_1(v),\ v^{\langle \hat{P}-\hat{Q},\hat{M^0}-\hat{M^1}\rangle}\zeta_2(v)\in \cZ$ take the same values in infinitely many $v=v_q^n$ for $n\geqslant 1$, which implies that they are equal to each other, and this finish the proof of the formula (\ref{maximal orbit formula}). The other statements follow the formula (\ref{maximal orbit formula}) and Lemma \ref{image of b} directly.
\end{proof}

\begin{definition}
(a) We define $\cT$ to be the $\cZ$-subalgebra of $\cD\cK^{ss,*}$ generated by $(\tilde{I}_{K_{P_i}}^*)^{\pm 1},(\tilde{I}_{K^*_{P_i}}^*)^{\pm 1}$ for $i\in I$, which is a commutative algebra. Then the multiplication $$*_r:\cT\times \cD\cK^{ss,*}\rightarrow \cD\cK^{ss,*}$$ 
defines a $\cT$-module structure on $\cD\cK^{ss,*}$.\\
(a) We define $\cT^{\red}$ to be the $\cZ$-subalgebra of $\cD\cK^{ss,*,\red}$ generated by $\tilde{I}_{K_{P_i}}^*,\tilde{I}_{K^*_{P_i}}^*$ for $i\in I$, which is a commutative algebra. Then the multiplication $$*_r:\cT^{\red}\times \cD\cK^{ss,*,\red}\rightarrow \cD\cK^{ss,*,\red}$$ 
defines a $\cT^{\red}$-module structure on $\cD\cK^{ss,*,\red}$.\\
(c) A complex $M_\bullet=(M^1,M^0,d^1,d^0)\in \cC_2(\cP)$ is said to be radical, if $\im d^j\subset \mbox{\rm{rad}\ } {M^{j+1}}$. We define
$$\cI^{*,\rad}=\{I^*_{M_\bullet}| M_\bullet\ \textrm{is radical} \}.$$
\end{definition}

\begin{lemma}\label{basis of torus}
(a) The algebra $\cT$ has a $\cZ$-basis $\{(\tilde{I}^*_{K_P\oplus K_Q^*})^{\pm 1}|P,Q\in \cP\}$.\\
(b) For any $\alpha\in K(\cA)$, it can be written as $\alpha=\hat{P}-\hat{Q}$ for some $P,Q\in \cP$, and there is a well-defined element 
$$\tilde{I}^*_{\alpha}=\tilde{I}^*_{K_P\oplus K_Q^*}=\tilde{I}^*_{K_P}*_r\tilde{I}^*_{K_Q^*}\in \cT^{\red}.$$
Moreover, there is an algebra isomorphism $\cT^{\red}\cong\cZ[K(\cA)]$, where $\cZ[K(\cA)]$ is the group algebra of $K(\cA)$ over $\cZ$, and $\cT^{\red}$ has a $\cZ$-basis $\{\tilde{I}^*_{\alpha}|\alpha\in K(\cA)\}$.
\end{lemma}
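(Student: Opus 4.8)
The plan is to treat (a) and (b) in the same spirit: first show that the indicated sets span, and then deduce $\cZ$-linear independence by specializing to Bridgeland's Hall algebras over the fields $\bbF_{q^n}$, using that a nonzero Laurent polynomial has only finitely many roots.

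\emph{Part (a).} By Proposition~\ref{Ore_2}, formula~(\ref{commutative}), the generators $\tilde{I}^*_{K_{P_i}},\tilde{I}^*_{K^*_{P_i}}$ ($i\in I$) of $\cT$ pairwise commute, so every element of $\cT$ is a $\cZ$-linear combination of monomials $\prod_i(\tilde{I}^*_{K_{P_i}})^{a_i}*_r\prod_i(\tilde{I}^*_{K^*_{P_i}})^{b_i}$ with $a_i,b_i\in\bbN$. By the last displayed formulas of Lemma~\ref{maximal orbit induction} such a monomial equals $\tilde{I}^*_{K_P\oplus K^*_Q}$ with $P=\bigoplus_i a_iP_i$ and $Q=\bigoplus_i b_iP_i$, and by the uniqueness of the decomposition in \cite[Lemma~4.3]{Bridgeland-2013} non-isomorphic pairs $(P,Q)$ give different elements; hence $\{\tilde{I}^*_{K_P\oplus K^*_Q}\mid P,Q\in\cP\}$ spans $\cT$. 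For independence I will show that the localization map $\cK^{ss,*}\to\cD\cK^{ss,*}$ is injective. As $\cD\cK^{ss,*}$ is the Ore localization at the commutative multiplicative monoid generated by the $\tilde{I}^*_{K_P},\tilde{I}^*_{K^*_P}$, and a product of non-zero-divisors is again one, it suffices that each $\tilde{I}^*_{K_P}$ and $\tilde{I}^*_{K^*_P}$ be a non-zero-divisor in $\cK^{ss,*}$. Under~(\ref{three algebra isomorphism}) over $\bbF_{q^n}$ the element $\tilde{I}^*_{K_P}$ corresponds to $b_{K_P}\in\cH_q^{\tw}(\cC_2(\cP))$ (Lemma~\ref{image of b}), and since $K_P$ is projective-injective in $\cC_2(\cP)$ one has $g^{L_\bullet}_{K_P,M_\bullet}\neq 0$ only for $L_\bullet\cong K_P\oplus M_\bullet$, in which case it is positive; hence left multiplication by $b_{K_P}$ carries $u_{[M_\bullet]}$ to a nonzero scalar multiple of $u_{[K_P\oplus M_\bullet]}$ and is injective. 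Now if $\tilde{I}^*_{K_P}*_r x=0$ for $x=\sum_{M_\bullet}c_{M_\bullet}(v)I^*_{M_\bullet}\in\cK^{ss,*}$, specializing to $v=v_q^n$ via~(\ref{three algebra isomorphism}) for every $n\geq 1$ forces all $c_{M_\bullet}(v_q^n)=0$, whence $c_{M_\bullet}=0$; the case of $\tilde{I}^*_{K^*_P}$ is identical (or follows from~(\ref{I-Ore})). Given the injectivity, the $\tilde{I}^*_{K_P\oplus K^*_Q}$ are unit multiples of the pairwise distinct basis vectors $I^*_{K_P\oplus K^*_Q}$ of $\cK^{ss,*}$, so they are $\cZ$-linearly independent in $\cD\cK^{ss,*}$, which proves (a).

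\emph{Part (b).} Since $\mathbf{Q}$ is Dynkin, the classes $\hat{P_i}$ ($i\in I$) form a $\bbZ$-basis of $K(\cA)$ (the Cartan matrix of $k\mathbf{Q}$ being unimodular), so every $\alpha\in K(\cA)$ is $\hat{P}-\hat{Q}$ for suitable $P,Q\in\cP$; moreover $\hat{P}+\hat{Q'}=\hat{P'}+\hat{Q}$ forces $P\oplus Q'\cong P'\oplus Q$. Using the relations $\tilde{I}^*_{K_R}*_r\tilde{I}^*_{K_{R'}}=\tilde{I}^*_{K_R\oplus K_{R'}}$ and $\tilde{I}^*_{K^*_R}*_r\tilde{I}^*_{K^*_{R'}}=\tilde{I}^*_{K^*_R\oplus K^*_{R'}}$ of Lemma~\ref{maximal orbit induction}, the defining relation $\tilde{I}^*_{K_R}*_r\tilde{I}^*_{K^*_R}=1$ of the reduced quotient, and commutativity, one checks that whenever $\hat{P}-\hat{Q}=\hat{P'}-\hat{Q'}$ the elements $\tilde{I}^*_{K_P}*_r\tilde{I}^*_{K^*_Q}$ and $\tilde{I}^*_{K_{P'}}*_r\tilde{I}^*_{K^*_{Q'}}$ become equal in $\cD\cK^{ss,*,\red}$ after multiplying both sides by the invertible element $\tilde{I}^*_{K_Q}*_r\tilde{I}^*_{K_{Q'}}$ and cancelling; hence $\tilde{I}^*_\alpha:=\tilde{I}^*_{K_P\oplus K^*_Q}=\tilde{I}^*_{K_P}*_r\tilde{I}^*_{K^*_Q}$ (the middle equality being Lemma~\ref{maximal orbit induction}) is well defined. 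The same computation yields $\tilde{I}^*_\alpha*_r\tilde{I}^*_\beta=\tilde{I}^*_{\alpha+\beta}$, so $e_\alpha\mapsto\tilde{I}^*_\alpha$ defines a $\cZ$-algebra homomorphism $\Phi\colon\cZ[K(\cA)]\to\cT^{\red}$, which is surjective since $\tilde{I}^*_{\hat{P_i}}=\tilde{I}^*_{K_{P_i}}$ and $\tilde{I}^*_{-\hat{P_i}}=\tilde{I}^*_{K^*_{P_i}}$ generate $\cT^{\red}$.

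It remains to prove that $\Phi$ is injective, i.e.\ that the $\tilde{I}^*_\alpha$ ($\alpha\in K(\cA)$) are $\cZ$-linearly independent in $\cD\cK^{ss,*,\red}$; this is the one step that is not formal, and the main obstacle. Under the isomorphism $\bbC\otimes_\cZ\cD\cK^{ss,*,\red}\cong\cD\cH^{\red}_q(\cA)$ of Theorem~\ref{Bridgeland's algebra via perverse sheaf}, $\tilde{I}^*_\alpha$ corresponds to $b_\alpha$, and the family $\{b_\alpha\mid\alpha\in K(\cA)\}$ is $\bbC$-linearly independent in $\cD\cH^{\red}_q(\cA)$ --- under Bridgeland's isomorphism $\cD\cH^{\red}_q(\cA)\cong\rU_{v_q}(\fg)$ these are the torus elements $K_\alpha$. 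Hence a relation $\sum_\alpha c_\alpha(v)\tilde{I}^*_\alpha=0$ with $c_\alpha\in\cZ$, specialized at $v=v_q^n$ for every $n\geq 1$ (replacing $\sigma$ by $\sigma^n$), gives $\sum_\alpha c_\alpha(v_q^n)b_\alpha=0$, so all $c_\alpha(v_q^n)=0$ and therefore $c_\alpha=0$. Thus $\Phi$ is an isomorphism, $\cT^{\red}\cong\cZ[K(\cA)]$, and $\{\tilde{I}^*_\alpha\mid\alpha\in K(\cA)\}$ is a $\cZ$-basis of $\cT^{\red}$. The point of difficulty throughout is exactly this transfer: the localized and reduced algebras carry no basis manifestly refining their torus part, so linear independence has to be imported from the finite-field Hall algebras by observing that a fixed Laurent-polynomial identity is forced to hold at the infinitely many values $v_q^n$.
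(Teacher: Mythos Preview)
Your argument is correct in both parts; the differences from the paper are in emphasis and, in one place, in strategy.

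For part (a), the paper simply says the linear independence is ``clear'' after establishing spanning via Lemma~\ref{maximal orbit induction}. Your proof is more careful: you actually verify that the Ore set consists of non-zero-divisors (so $\cK^{ss,*}\hookrightarrow\cD\cK^{ss,*}$) and then observe that the $\tilde I^*_{K_P\oplus K_Q^*}$ are unit multiples of distinct dual-basis elements. This is a genuine improvement in rigor. One small slip: the decomposition you invoke for ``non-isomorphic pairs $(P,Q)$ give different elements'' is \cite[Lemma~4.2]{Bridgeland-2013}, not Lemma~4.3.

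For part (b) your approach and the paper's diverge. You build the map $\Phi:\cZ[K(\cA)]\to\cT^{\red}$ and prove it is injective by specializing to $v=v_q^n$ and importing, via Bridgeland's isomorphism $\cD\cH^{\red}_{q^n}(\cA)\cong\rU_{v_{q^n}}(\fg)$, the linear independence of the torus elements $K_\alpha$. The paper instead uses part (a) to define a surjection $\Psi:\cT\to\cZ[K(\cA)]$ on the basis by $\tilde I^*_{K_P\oplus K_Q^*}\mapsto e_{\hat P-\hat Q}$, checks directly that the well-definedness computation forces $\Psi$ to factor through $\cT^{\red}$, and then observes that the induced map and $\Phi$ are mutual inverses on generators. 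The paper's route is entirely internal---it never needs the triangular decomposition of $\rU_v(\fg)$---while yours is perfectly valid but leans on an external structural fact. If you want a proof that does not import anything from quantum groups, it is worth noting that your $\Phi$ already admits the explicit left inverse $\Psi$, which makes the specialization step unnecessary.
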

\begin{proof}
(a) By Lemma \ref{maximal orbit induction}, $\cT$ can be spanned by $(\tilde{I}^*_{K_P\oplus K_Q^*})^{\pm 1}$ for $P,Q\in \cP$. It is clear that these elements are linearly independent over $\cZ$, and so they form a $\cZ$-basis of $\cT$.

(b) For any representation $M\in \cA$, let $0\rightarrow Q\rightarrow P\rightarrow M\rightarrow 0$ be a projective resolution, then $\hat{M}=\hat{P}-\hat{Q}$. So any element $\alpha\in K(\cA)$ can be written as $\alpha=\hat{P}-\hat{Q}$ for some $P,Q\in \cP$. Suppose $\alpha=\hat{P}-\hat{Q}=\hat{P'}-\hat{Q'}$ for another $P',Q'\in \cP$, then we have $\hat{P}+\hat{Q'}=\hat{P'}+\hat{Q}$ and $P\oplus Q'\cong P'\oplus Q$, $\tilde{I}^*_{K_P\oplus K_{Q'}}=\tilde{I}^*_{K_{P'}\oplus K_Q}$. By Lemma \ref{maximal orbit induction} and Proposition \ref{Ore_2}, we obtain 
$$\tilde{I}^*_{K_P\oplus K_{Q'}}*_r\tilde{I}_{K_Q^*\oplus K_{Q'}^*}=\tilde{I}^*_{K_{P'}\oplus K_Q}*_r\tilde{I}_{K_Q^*\oplus K_{Q'}^*},\ \tilde{I}^*_{K_P}*_r\tilde{I}^*_{K_Q^*}=\tilde{I}^*_{K_{P'}}*_r\tilde{I}^*_{K_{Q'}^*}.$$
So $\tilde{I}^*_{\alpha}$ is well-defined. Moreover, $\tilde{\cI}^*_{K_P\oplus K_Q^*}\mapsto \hat{P}-\hat{Q}$ induces a surjective algebra homomorphism $\cT\rightarrow\cZ[K(\cA)]$. Notice that this homomorphism factors through $\cT^{\red}\rightarrow\cZ[K(\cA)]$ which has an inverse induced by $\alpha\mapsto \tilde{I}^*_{\alpha}$. Therefore, $\cT^{\red}\cong \cZ[K(\cA)]$ and $\cT^{\red}$ has a $\cZ$-basis $\{\tilde{I}^*_{\alpha}|\alpha\in K(\cA)\}$.
\end{proof}

\begin{lemma}\label{inductive lemma}
For any $\ue=\ue'+\ue''$ and $\cO_{L_\bullet}\subset\rC_{\ue},\ \cO_{M_\bullet}\subset \rC_{\ue'},\ \cO_{N_\bullet}\subset \rC_{\ue''}$ satisfying $\Ext^1_{\cC_2(\cP)}(M_\bullet, N_\bullet)=0$, if $\IC_{M_\bullet}\boxtimes \IC_{N_\bullet}$ is a direct summand of $\Res^{\ue}_{\ue',\ue''}(\IC_{L_\bullet})$ up to shifts and Tate twists, then $\cO_{M_\bullet\oplus N_\bullet}\subset \overline{\cO_{L_\bullet}}$. 
\end{lemma}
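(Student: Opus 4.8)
The plan is to unwind the hyperbolic-localization description of the restriction functor proved in Proposition \ref{restriction-hyperbolic} and combine it with the geometry of the attracting set. Recall that, with respect to the one-parameter subgroup $\zeta$ defined in the proof of Proposition \ref{restriction-hyperbolic}, we have $\Res^{\ue}_{\ue',\ue''}(C) \cong \kappa_!\iota^*(C)[|\ue',\ue''|](\frac{|\ue',\ue''|}{2})$, where $\iota:\rF\hookrightarrow \rC_{\ue}$ is the inclusion of the fixed-stable locus and $\kappa:\rF\to \rC_{\ue'}\times\rC_{\ue''}$. The key point is that $\kappa$ factors as the projection $\rF = \rC_{\ue}^+ \xrightarrow{\pi^+} \rC_{\ue}^{k^*}\cong \rC_{\ue'}\times\rC_{\ue''}$, so that for a point $(M_\bullet,N_\bullet)$ the fibre $\kappa^{-1}(M_\bullet,N_\bullet)$ consists precisely of those $(d^1,d^0)\in\rF$ whose associated $L_\bullet$ has $\rho_{1*}(d^1,d^0)\cong M_\bullet$ and $\rho_{2*}(d^1,d^0)\cong N_\bullet$; by Lemma \ref{fibre of kappa} and Remark \ref{mapping cone}, these are exactly the shifts of mapping cones $\mathrm{Cone}(f)^*$ for $f\in\Hom_{\cC_2(\cP)}(M_\bullet,N_\bullet^*)$.

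First I would observe what it means for $\IC_{M_\bullet}\boxtimes\IC_{N_\bullet}$ (up to shift and twist) to be a direct summand of $\Res^{\ue}_{\ue',\ue''}(\IC_{L_\bullet})$: by the support condition of IC sheaves, the stalk of $\kappa_!\iota^*(\IC_{L_\bullet})$ at the point $(M_\bullet,N_\bullet)$ must be nonzero. Since $\iota^*(\IC_{L_\bullet})$ is supported on $\overline{\cO_{L_\bullet}}\cap\rF$, the nonvanishing of the stalk of $\kappa_!\iota^*(\IC_{L_\bullet})$ at $(M_\bullet,N_\bullet)$ — which by proper/exceptional base change is the compactly-supported cohomology of the fibre $\kappa^{-1}(M_\bullet,N_\bullet)$ with coefficients in $\iota^*(\IC_{L_\bullet})$ — forces $\kappa^{-1}(M_\bullet,N_\bullet)\cap\overline{\cO_{L_\bullet}}\neq\varnothing$. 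So there is some extension $L'_\bullet$ of $M_\bullet$ by $N_\bullet$ (a middle term of a short exact sequence $0\to N_\bullet\to L'_\bullet\to M_\bullet\to 0$, equivalently a shift of $\mathrm{Cone}(f)$ for some $f$) with $\cO_{L'_\bullet}\subset\overline{\cO_{L_\bullet}}$.

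Next I would use the hypothesis $\Ext^1_{\cC_2(\cP)}(M_\bullet,N_\bullet)=0$. By Lemma \ref{bijection between Ext and Hom} this means $\Hom_{\cK_2(\cP)}(M_\bullet,N_\bullet^*)=0$, so every $f\in\Hom_{\cC_2(\cP)}(M_\bullet,N_\bullet^*)$ is null-homotopic, and hence every short exact sequence $0\to N_\bullet\to L'_\bullet\to M_\bullet\to 0$ splits; thus the only possible middle term is $L'_\bullet\cong M_\bullet\oplus N_\bullet$. Combined with the previous step this gives $\cO_{M_\bullet\oplus N_\bullet}\subset\overline{\cO_{L_\bullet}}$, as desired. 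In other words, once $\Ext^1$ vanishes the entire fibre $\kappa^{-1}(M_\bullet,N_\bullet)$ is a single $\rG_{\ue}$-orbit worth of data — concretely it is $\prod_{i,j}\mathrm{Gr}$-type data times $\rG_{\ue'}\times\rG_{\ue''}$ but the underlying complex is always $M_\bullet\oplus N_\bullet$ — so the support statement collapses to the orbit-closure statement.

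The step I expect to be the main obstacle is making rigorous the passage from ``$\IC_{M_\bullet}\boxtimes\IC_{N_\bullet}$ is a direct summand of $\Res^{\ue}_{\ue',\ue''}(\IC_{L_\bullet})$ up to shifts and Tate twists'' to ``the fibre $\kappa^{-1}(M_\bullet,N_\bullet)$ meets $\overline{\cO_{L_\bullet}}$''. One has to be careful that $\kappa$ is not proper (its fibres are affine spaces $\Hom_{\cC_2(\cP)}(M_\bullet,N_\bullet^*)$), so one cannot directly invoke proper base change; instead I would use that $\kappa$ restricted to the preimage of a neighbourhood of $(M_\bullet,N_\bullet)$ is an affine-bundle-like map onto its image, or alternatively argue via the description $\kappa_!\iota^* \cong (\pi^+)_!(g^+)^*$ from the hyperbolic localization setup together with Braden's theorem, which guarantees the output is a well-behaved (semisimple) complex whose support can be computed orbit by orbit. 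A clean way to finish: the coefficient of $I^*_{M_\bullet}\otimes I^*_{N_\bullet}$ in $r(I_{L_\bullet})$, which is the Laurent polynomial $\zeta^{L_\bullet}_{M_\bullet,N_\bullet}(v)$ of Proposition \ref{positivity}, is nonzero by hypothesis; evaluating at $v=v_q^n$ and using the sheaf-function dictionary of Proposition \ref{chi} together with formula (\ref{formula 2}) of Proposition \ref{key proposition} shows that $\Ext^1_{\cC_2(\cP_{\bbF_{q^n}})}(M_\bullet,N_\bullet)_{L_\bullet}\neq\varnothing$ for infinitely many $n$, and since $\Ext^1_{\cC_2(\cP)}(M_\bullet,N_\bullet)=0$ the only class of short exact sequence is the split one, whose middle term $M_\bullet\oplus N_\bullet$ must therefore lie in $\overline{\cO_{L_\bullet}}$. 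I would present this function-theoretic route as the primary argument since it sidesteps the non-properness of $\kappa$ entirely.
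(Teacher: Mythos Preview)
Your geometric argument is correct and is essentially the paper's proof: the support of a direct summand of $\kappa_!\iota^*(\IC_{L_\bullet})$ is contained in $\kappa(\overline{\cO_{L_\bullet}}\cap\rF)$, so $(M_\bullet,N_\bullet)$ lies in this image, i.e.\ the fibre $\kappa^{-1}(M_\bullet,N_\bullet)$ meets $\overline{\cO_{L_\bullet}}$; since $\Ext^1_{\cC_2(\cP)}(M_\bullet,N_\bullet)=0$ every point of that fibre represents $M_\bullet\oplus N_\bullet$, and the conclusion follows.

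Your worry about non-properness of $\kappa$ is unfounded, and you should drop the function-theoretic detour. The only fact needed is that $\mathrm{supp}(f_!K)\subset f(\mathrm{supp}(K))$ for \emph{any} morphism $f$: the stalk of $f_!K$ at $y$ is $H^*_c(f^{-1}(y),K|_{f^{-1}(y)})$, which vanishes whenever $f^{-1}(y)\cap\mathrm{supp}(K)=\varnothing$. No base change theorem, properness, or semisimplicity is required. Your alternative via Proposition~\ref{key proposition} also has a small gap as written: formula~(\ref{formula 2}) computes $\res(1_{\cO_{L_\bullet}})$, not $\res(\chi_{\IC_{L_\bullet}})$, so passing from $\zeta^{L_\bullet}_{M_\bullet,N_\bullet}(v)\neq 0$ to an Ext statement would require first expanding $\chi_{\IC_{L_\bullet}}$ in the basis $\{1_{\cO_{L'_\bullet}}\}$ and tracking which $L'_\bullet$ lie in $\overline{\cO_{L_\bullet}}$---at which point you are already doing the support argument.
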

\begin{proof}
Since the supports of $\IC_{L_\bullet}$ and $\IC_{M_\bullet}\boxtimes \IC_{N_\bullet}$ are $\overline{\cO_{L_\bullet}}$ and $\overline{\cO_{M_\bullet}}\times \overline{\cO_{N_\bullet}}$ respectively, we have 
$\overline{\cO_{M_\bullet}}\times \overline{\cO_{N_\bullet}}\subset\kappa(\overline{\cO_{L_\bullet}}\cap \rF)$, and in particular, 
$$\cO_{M_\bullet}\times \cO_{N_\bullet}\subset \kappa(\overline{\cO_{L_\bullet}}\cap \rF),$$
where $\rF,\kappa$ appear in the definition of $\Res^{\ue}_{\ue',\ue''}$, see \S \ref{Restriction functor}. If there exists an orbit $\cO_{L'_\bullet}\subset \rC_{\ue}$ such that $\cO_{M_\bullet}\times \cO_{N_\bullet}\subset\kappa(\cO_{L'_\bullet}\cap \rF)$, then $L'_\bullet$ has a subobject isomorphic to $N_\bullet$ with quotient isomorphic to $M_\bullet$. Since $\Ext^1_{\cC_2(\cP)}(M_\bullet,N_\bullet)=0$, we know that $\cO_{M_\bullet\oplus N_\bullet}\subset \rC_{\ue}$ is the unique orbit such that $\cO_{M_\bullet}\times \cO_{N_\bullet}\subset\kappa(\cO_{M_\bullet\oplus N_\bullet}\cap \rF)$. Therefore, $\cO_{M_\bullet\oplus N_\bullet}\subset \overline{\cO_{L_\bullet}}$.
\end{proof}

\begin{theorem}\label{free over torus}
(a) The $\cT$-module $\cD\cK^{ss,*}$ is free with a $\cT$-basis $\cI^{*,\rad}$.\\
(b) The $\cT^{\red}$-module $\cD\cK^{ss,*,\red}$ is free with a $\cT^{\red}$-basis $\cI^{*,\rad}$
\end{theorem}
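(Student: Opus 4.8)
The plan is to reduce both parts to a triangularity statement relating the ``monomial'' elements $\tilde I^*_{K_P\oplus K_Q^*}*_r I^*_{M_\bullet}$ (with $M_\bullet$ radical) to the basis $\cI^*$. Throughout I use Bridgeland's decomposition (\cite[Lemma 4.3]{Bridgeland-2013}, already invoked for Lemma \ref{Finite orbits}): every $L_\bullet\in\cC_2(\cP)$ is isomorphic to $C_M\oplus C_N^*\oplus K_P\oplus K_Q^*$ with $M=H^0(L_\bullet),\ N=H^1(L_\bullet)$; here $C_M\oplus C_N^*$ is radical, and $L_\bullet\mapsto(C_M\oplus C_N^*,(P,Q))$ is a bijection between the set of orbits (over all $\ue$) and the set of pairs consisting of a radical complex and a pair of projectives. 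Since $\cT$ has $\cZ$-basis $\{\tilde I^*_{K_P\oplus K_Q^*}\mid P,Q\in\cP\}$ (Lemma \ref{basis of torus}(a)) and $\cT$ permutes the monomial elements by $\tilde I^*_{K_{P_0}\oplus K_{Q_0}^*}*_r(\tilde I^*_{K_P\oplus K_Q^*}*_r I^*_{M_\bullet})=\tilde I^*_{K_{P_0\oplus P}\oplus K_{Q_0\oplus Q}^*}*_r I^*_{M_\bullet}$ (associativity of $*_r$ with the torus product formula of Lemma \ref{maximal orbit induction}), it suffices to show that the monomial elements form a $\cZ$-basis of $\cK^{ss,*}$. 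Granting this, $\cK^{ss,*}=\bigoplus_{M_\bullet\ \textrm{radical}}\cT*_r I^*_{M_\bullet}$, free over $\cT$ with basis $\cI^{*,\rad}$; localizing at $\{\tilde I^*_{K_P\oplus K_Q^*}\}$ commutes with this decomposition and gives (a); and (b) follows by base change along $\cT\to\cT^{\red}$, since the ideal defining the reduced quotient is $\langle\tilde I^*_{K_{P_i}\oplus K_{P_i}^*}-1\mid i\in I\rangle$ with $\tilde I^*_{K_{P_i}\oplus K_{P_i}^*}$ central in $\cD\cK^{ss,*}$ (Proposition \ref{Ore_2}), so $\cD\cK^{ss,*,\red}\cong\cD\cK^{ss,*}\otimes_{\cT}\cT^{\red}$ with $\cT^{\red}\cong\cZ[K(\cA)]$ (Lemma \ref{basis of torus}(b)).

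The crux is the triangularity: for radical $M_\bullet$ and $L_\bullet=K_P\oplus K_Q^*\oplus M_\bullet$,
$$\tilde I^*_{K_P\oplus K_Q^*}*_r I^*_{M_\bullet}=v^{n}I^*_{L_\bullet}+\sum_{\cO_{L_\bullet}\subsetneq\overline{\cO_{X_\bullet}}}c_{X_\bullet}(v)\,I^*_{X_\bullet},\qquad n\in\bbZ,\ c_{X_\bullet}\in\bbN[v,v^{-1}].$$
The upper-triangular shape comes from Lemma \ref{inductive lemma}: since $K_P,K_Q^*$ are contractible, hence projective--injective in the Frobenius category $\cC_2(\cP)$, one has $\Ext^1_{\cC_2(\cP)}(K_P\oplus K_Q^*,M_\bullet)=0$, so every $X_\bullet$ for which $\IC_{K_P\oplus K_Q^*}\boxtimes\IC_{M_\bullet}$ is a direct summand of $\Res^{\ue}_{\ue',\ue''}(\IC_{X_\bullet})$ up to shift and Tate twist satisfies $\cO_{L_\bullet}\subseteq\overline{\cO_{X_\bullet}}$; positivity of the coefficients is Proposition \ref{positivity}. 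That the leading coefficient is a monomial (a unit of $\cZ$) can be obtained either sheaf-theoretically — over $\cO_{K_P\oplus K_Q^*}\times\cO_{M_\bullet}$ the fibre of $\kappa$ is the affine space $\Hom_{\cC_2(\cP)}(K_P\oplus K_Q^*,M_\bullet^*)$ (Lemma \ref{fibre of kappa}), and $\Ext^1=0$ confines $\kappa^{-1}(\cO_{K_P\oplus K_Q^*}\times\cO_{M_\bullet})\cap(\overline{\cO_{L_\bullet}}\cap\rF)$ to $\cO_{L_\bullet}$, so $\Res(\IC_{L_\bullet})$ restricted there is $\kappa_!$ of a shifted constant sheaf on an affine bundle, whence $\IC_{K_P\oplus K_Q^*}\boxtimes\IC_{M_\bullet}$ occurs with multiplicity one — or, more cheaply, by transporting to $\cH_q^{\tw}(\cC_2(\cP))$ through the isomorphisms (\ref{three algebra isomorphism}): there $\tilde I^*_{K_P\oplus K_Q^*}$ corresponds to $b_{K_P}*b_{K_Q^*}$, one has the single-term identity $b_{K_P}*b_{K_Q^*}*(a_{M_\bullet}u_{[M_\bullet]})=v_q^{\langle\hat P-\hat Q,\hat{M^0}-\hat{M^1}\rangle}a_{L_\bullet}u_{[L_\bullet]}$ (\cite[Lemma 3.4]{Bridgeland-2013}), and $a_{L_\bullet}u_{[L_\bullet]}=v^{-\dim\cO_{L_\bullet}}I^*_{L_\bullet}+(\textrm{terms indexed by }\cO_{L_\bullet}\subsetneq\overline{\cO_{X_\bullet}})$ by the same computation used for Lemmas \ref{image of b} and \ref{maximal orbit induction}; this route also identifies $n$.

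Granting the triangularity, the remainder is a downward induction on $\preccurlyeq$. Within each $\rC_\ue$ there are finitely many orbits (Lemma \ref{Finite orbits}); at a maximal orbit Lemma \ref{maximal orbit induction} computes $\tilde I^*_{K_P\oplus K_Q^*}*_r I^*_{M_\bullet}$ with no lower-order terms (its radical part $M_\bullet$ again having maximal orbit), so $I^*_{L_\bullet}\in\cT*_r I^*_{M_\bullet}$; in general the triangularity expresses $I^*_{L_\bullet}$ in terms of $\tilde I^*_{K_P\oplus K_Q^*}*_r I^*_{M_\bullet}$ and the strictly larger $I^*_{X_\bullet}$, which already lie in $\sum_{M'_\bullet\ \textrm{radical}}\cT*_r I^*_{M'_\bullet}$ by induction. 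Hence the monomial elements span $\cK^{ss,*}$ over $\cZ$, and they are $\cZ$-linearly independent because, by the triangularity and the above bijection, their transition matrix to $\cI^*$ is unitriangular for $\preccurlyeq$ with monomial diagonal. Linear independence of $\cI^{*,\rad}$ over $\cT$ (resp.\ $\cT^{\red}$) then reduces, via the $\cZ$-bases of $\cT$ and $\cT^{\red}$ from Lemma \ref{basis of torus}, to the $\cZ$-linear independence of the monomial elements; combined with the reductions of the first paragraph this proves (a) and (b).

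The step I expect to be the main obstacle is the triangularity, and inside it the assertion that the diagonal coefficient is a unit of $\cZ$. The sheaf-theoretic route needs the hyperbolic-localization description of $\Res$ (Proposition \ref{restriction-hyperbolic}), a local computation of $\Res(\IC_{L_\bullet})$ near the generic point, and careful tracking of the Tate twist $n$; the function-theoretic route is short but rests on first establishing the unitriangularity between the Hall basis $\{a_{L_\bullet}u_{[L_\bullet]}\}$ and $\{I^*_{L_\bullet}\}$ under (\ref{three algebra isomorphism}), and one still has to verify that passing to the localization $\cD\cK^{ss,*}$ and to the reduced quotient $\cD\cK^{ss,*,\red}$ preserves the module decomposition as claimed.
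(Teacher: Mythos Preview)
Your proposal is correct and follows essentially the same route as the paper. Both arguments hinge on the same triangularity
\[
\tilde I^*_{K_P\oplus K_Q^*}*_r I^*_{M'_\bullet}=v^{n}I^*_{K_P\oplus K_Q^*\oplus M'_\bullet}+\sum_{\cO_{M_\bullet}\prec\cO_{N_\bullet}}\zeta^{N_\bullet}_{K_P\oplus K_Q^*,M'_\bullet}(v)\,I^*_{N_\bullet}
\]
obtained from Lemma~\ref{inductive lemma}, followed by downward induction on $\preccurlyeq$; the paper carries out exactly your sheaf-theoretic option for the diagonal coefficient (restricting $\Res(\IC_{M_\bullet})$ to $\cO_{K_P\oplus K_Q^*}\times\cO_{M'_\bullet}$, identifying $\tilde\kappa$ as an affine bundle via Lemma~\ref{fibre of kappa}, and reading off $\zeta^{M_\bullet}_{K_P\oplus K_Q^*,M'_\bullet}(v)=v^{-\langle\hat P,\hat{M'^0}\rangle-\langle\hat Q,\hat{M'^1}\rangle}$). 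Your unitriangularity argument for linear independence is the paper's ``pick a minimal $\cO_{K_{P'}\oplus K_{Q'}^*\oplus M_{s'\bullet}}$ and peel off'' argument repackaged, and your base-change reduction for (b) is the paper's observation that $\tilde I^*_{K_R}*_r\tilde I^*_{K_R^*}-1$ is central, written in module-theoretic language.
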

\begin{proof}
(a) We denote by $\cD\cK'$ the $\cT$-submodule of $\cD\cK^{ss,*}$ generated by $\cI^{*,\rad}$, and make an induction to prove $\cI^*_{M_\bullet}\in \cD\cK'$ for any $\cO_{M_\bullet}\subset \rC_{\ue}$. By \cite[Lemma 4.2]{Bridgeland-2013} or \cite[Lemma 2.3]{Fang-Lan-Xiao-2024}, the object $M_\bullet$ can be decomposed as $M_\bullet\cong K_P\oplus K_Q^*\oplus M'_\bullet$, where $P,Q\in \cP$ and $M'_\bullet$ is radical. If $\cO_{M_\bullet}\subset\rC_{\ue}$ is a maximal orbit, by Lemma \ref{maximal orbit induction}, we have 
$$\tilde{I}^*_{K_P\oplus K_Q^*}*_rv^{-\dim\cO_{M'_\bullet}}I^*_{M'_\bullet}=v^{\langle \hat{P}-\hat{Q}, \hat{M'^0}-\hat{M'^1} \rangle}v^{-\dim \cO_{M_\bullet}}I^*_{M_\bullet}\in \cD\cK',$$
and then $I^*_{M_\bullet}\in \cD\cK'$, since $v^{\langle \hat{P}-\hat{Q}, \hat{M'^0}-\hat{M'^1} \rangle}v^{-\dim \cO_{M_\bullet}}$ is invertible in $\cZ$.

We inductively assume that $I^*_{N_\bullet}\in \cD\cK'$ for any other orbits $\cO_{N_\bullet}\subset\rC_{\ue}$ satisfying $\cO_{M_\bullet}\preccurlyeq \cO_{N_\bullet}$. Consider the element $\tilde{I}^*_{K_P\oplus K_Q^*}*_rI^*_{M'_\bullet}\in \cD\cK'$ and express it into a $\cZ$-linear combination under the basis $\cI^*_{\ue}$ as follows
\begin{align*}
\tilde{I}^*_{K_P\oplus K_Q^*}*_rI^*_{M'_\bullet}=\sum_{I_{N_\bullet}^*\in \cI^*_{\ue}}v^{-\langle\hat{P}+\hat{Q},\hat{P}+\hat{Q}\rangle}\zeta^{N_\bullet}_{K_P\oplus K_Q^*,M'_\bullet}(v) I_{N_\bullet}^*,
\end{align*}
where $\zeta^{N_\bullet}_{K_P\oplus K_Q^*,M'_\bullet}(v)\in \bbN[v,v^{-1}]$ is given by
\begin{align*}
\zeta^{N_\bullet}_{K_P\oplus K_Q^*,M'_\bullet}(v)=(I^*_{K_P\oplus K_Q^*}*_rI^*_{M'_\bullet})(I_{N_\bullet})
=(I^*_{K_P\oplus K_Q^*}\otimes I^*_{M'_\bullet})(r(I_{N_\bullet})).
\end{align*}
If $\zeta^{N_\bullet}_{K_P\oplus K_Q^*,M'_\bullet}(v)\not=0$, then $\IC_{K_P\oplus K_Q^*}\boxtimes \IC_{M'_\bullet}$, up to shifts and Tate twists, is a direct summand of 
$$\Res^{\ue}_{\ue',\ue''}(\IC_{N_\bullet})=\kappa_!\iota^*(\IC_{N_\bullet})[|\ue',\ue''|](\frac{|\ue',\ue''|}{2}),$$
where $\ue'=\ue_{K_P\oplus K_Q^*},\ \ue''=\ue_{M'_\bullet}$ and $|\ue',\ue''|=\langle\hat{P}+\hat{Q},\hat{M'^1}\rangle+\langle\hat{P}+\hat{Q},\hat{M'^0}\rangle$, see \S \ref{Restriction functor}. Since $\Ext^1_{\cC_2(\cP)}(K_P\oplus K_Q^*,M'_\bullet)=0$, by Lemma \ref{inductive lemma}, we have $\cO_{M_\bullet}\preccurlyeq \cO_{N_\bullet}$. Hence 
\begin{equation}
\begin{aligned}\label{I_M^* in DK' up to v power}
&\zeta^{M_\bullet}_{K_P\oplus K_Q^*,M'_\bullet}(v)I^*_{M_\bullet}\\=&v^{\langle\hat{P}+\hat{Q},\hat{P}+\hat{Q}\rangle}\tilde{I}^*_{K_P\oplus K_Q^*}*_rI^*_{M'_\bullet}-\sum\zeta^{N_\bullet}_{K_P\oplus K_Q^*,M'_\bullet}(v)I^*_{N_\bullet}\in \cD\cK',
\end{aligned}
\end{equation}
where the sum is taken over $I_{N_\bullet}^*\in \cI^*_{\ue}$ satisfying 
$\cO_{M_\bullet}\not=\cO_{N_\bullet},\cO_{M_\bullet}\preccurlyeq \cO_{N_\bullet}$, and these $I_{N_\bullet}^*\in \cD\cK'$ have been proved by the inductive hypothesis. 

We calculate $\zeta^{M_\bullet}_{K_P\oplus K_Q^*,M'_\bullet}(v)$ explicitly to prove that it is invertible in $\cZ$. Consider the restriction of $\Res^{\ue}_{\ue',\ue''}(\IC_{M_\bullet})$ to $\cO_{K_P\oplus K_Q^*}\times \cO_{M'_\bullet}$. For any direct summand $\IC_{S_\bullet}\boxtimes \IC_{T_\bullet}$ of $\Res^{\ue}_{\ue',\ue''}(\IC_{M_\bullet})$ up to shifts and Tate twists, if it has non-zero restriction on $\cO_{K_P\oplus K_Q^*}\times \cO_{M'_\bullet}$, we have $\cO_{K_P\oplus K_Q^*}\subset \overline{\cO_{S_\bullet}},\cO_{M'_\bullet}\subset \overline{\cO_{T_\bullet}}$, then $\cO_{K_P\oplus K_Q^*}=\cO_{S_\bullet}$ by the maximality. On the one hand, Lemma \ref{inductive lemma}, we have $\cO_{K_P\oplus K_Q^*\oplus T_\bullet}\subset \overline{\cO_{M_\bullet}}$; on the other hand, by $\cO_{M'_\bullet}\subset \overline{\cO_{T_\bullet}}$, we have $\cO_{M_\bullet}\subset\overline{\cO_{K_P\oplus K_Q^*\oplus T_\bullet}}$. Hence $\cO_{M_\bullet}=\cO_{K_P\oplus K_Q^*\oplus T_\bullet}, \cO_{M'_\bullet}=\cO_{T_\bullet}$, and $\IC_{K_P\oplus K_Q^*}\boxtimes \IC_{M'_\bullet}$ is the unique direct summand, up to shifts and Tate twists, which has non-zero restriction on $\cO_{K_P\oplus K_Q^*}\times \cO_{M'_\bullet}$. Consider the following commutative diagram
$$\xymatrix{\cO_{K_P\oplus K_Q^*}\times \cO_{M'_\bullet} \ar[d]_-{j_{K_P\oplus K_Q^*}\times j_{M'_\bullet}} &\tilde{\rF} \ar[d]_-{\tilde{j}} \ar[l]_-{\tilde{\kappa}} \ar[r]^-{\iota\tilde{j}} \ar@{}[dl]|{\square}&\rC_{\ue} \ar@{=}[d]\\ \rC_{\ue'}\times \rC_{\ue''} &\rF \ar[l]_-{\kappa} \ar[r]^-{\iota} &\rC_{\ue},}$$
where $j_{K_P\oplus K_Q^*}\times j_{M'_\bullet}$ is the inclusion, $\tilde{\rF}=\kappa^{-1}(\cO_{K_P\oplus K_Q^*}\times \cO_{M'_\bullet})$ and $\tilde{\kappa},\tilde{j}$ are natural morphisms. By Lemma \ref{fibre of kappa}, $\tilde{\kappa}$ is a vector bundle of rank 
$$\eta=\dim \Hom_{\cC_2(\cP)}(K_P\oplus K_Q^*,M'^*_{\bullet})=\langle\hat{P},\hat{M'^0}\rangle+\langle\hat{Q},\hat{M'^1}\rangle.$$
Since $\Ext^1_{\cC_2(\cP)}(K_P\oplus K_Q^*,M'_\bullet)=0$, we know that $\cO_{M_\bullet}\subset\rC_{\ue}$ is the unique orbit such that $\cO_{K_P\oplus K_Q^*}\times \cO_{M'_\bullet}\subset\kappa(\cO_{M_\bullet}\cap \rF)$, and so $\tilde{\rF}=\cO_{M_\bullet}\cap \rF$. Hence the restriction of $\Res^{\ue}_{\ue',\ue''}(\IC_{M_\bullet})$ on $\cO_{K_P\oplus K_Q^*}\times \cO_{M'_\bullet}$ is 
\begin{align*}
&(j_{K_P\oplus K_Q^*}\times j_{M'_\bullet})^*\kappa_!\iota^*(\IC_{M_\bullet})[|\ue',\ue''|](\frac{|\ue',\ue''|}{2})\\
\cong &\tilde{\kappa}_!\tilde{j}^*\iota^*(\IC_{M_\bullet})[|\ue',\ue''|](\frac{|\ue',\ue''|}{2})\\
\cong &\tilde{\kappa}_!(\overline{\bbQ}_l|_{\cO_{M_\bullet}\cap\rF})[\dim \cO_{M_\bullet}+|\ue',\ue''|](\frac{\dim \cO_{M_\bullet}+|\ue',\ue''|}{2})\\
\cong &(\overline{\bbQ}_l|_{\cO_{K_P\oplus K_Q^*}}\boxtimes \overline{\bbQ}_l|_{\cO_{M'_\bullet}})[\dim \cO_{M_\bullet}+|\ue',\ue''|-2\eta](\frac{\dim \cO_{M_\bullet}+|\ue',\ue''|-2\eta}{2}).
\end{align*}
Comparing with the restriction of $\IC_{K_P\oplus K_Q^*}\boxtimes \IC_{M'_\bullet}$ on $\cO_{K_P\oplus K_Q^*}\times \cO_{M'_\bullet}$,  
$$(\overline{\bbQ}_l|_{\cO_{K_P\oplus K_Q^*}}\boxtimes \overline{\bbQ}_l|_{\cO_{M'_\bullet}})[\dim \cO_{K_P\oplus K_Q^*}+\dim \cO_{M'_\bullet}](\frac{\dim \cO_{K_P\oplus K_Q^*}+\dim \cO_{M'_\bullet}}{2}),$$
we obtain 
$$v^{-(\dim \cO_{M_\bullet}+|\ue',\ue''|-2\eta)}=v^{-(\dim \cO_{K_P\oplus K_Q^*}+\dim \cO_{M'_\bullet})}\zeta^{M_\bullet}_{K_P\oplus K_Q^*,M'_\bullet}(v),$$
and so $\zeta^{M_\bullet}_{K_P\oplus K_Q^*,M'_\bullet}(v)$ is a power of $v$, and is invertible in $\cZ$, which implies that $I_{M_\bullet}^*\in \cD\cK'$ from the formula (\ref{I_M^* in DK' up to v power}). Thus $\cD\cK^{ss,*}=\cD\cK'$ is generated by $\cI^{*,\rad}$ as a $\cT$-module. In the meantime, above argument also proves that $\cD\cK^{ss,*,\red}$ is generated by $\cI^{*,\rad}$ as a $\cT^{\red}$-module.

It remains to prove that elements in $\cI^{*,\rad}$ are linearly independent over $\cT$. For any $\sum_{s=1}^nt_s*_rI^*_{M_{s\bullet}}=0$ in $\cD\cK^{ss,*}$, where $t_s\in \cT$ and $I^*_{M_{s\bullet}}\in \cI^{*,\rad}$, we need to prove all $t_s=0$. By Lemma \ref{basis of torus}, we can suppose that
$$t_s=\sum_{(P,Q)\in \cP_s}\xi_{s,P,Q}(v)(\tilde{I}^*_{K_P\oplus K_Q^*})^{\pm 1},$$
where $\cP_s\subset \cP\times \cP$ and $\xi_{s,P,Q}(v)\in \cZ$. Replacing $\sum_{s=1}^nt_s*_rI^*_{M_{s\bullet}}=0$ by $\tilde{I}^*_{K_{P'}\oplus K_{Q'}^*}*_r\sum_{s=1}^nt_s*_rI^*_{M_{s\bullet}}=0$ for some $P',Q'\in \cP$, we may assume that there is no any $(\tilde{I}^*_{K_P\oplus K_Q^*})^{-1}$ appearing in $t_s$. Since $\cD\cK^{ss,*}$ is $\bbN[I]\times \bbN[I]$-graded, without loss of generality, we may assume that $\ue_{K_P\oplus K_Q^*}+\ue_{M_{s\bullet}}=\ue$ are constant for various $(P,Q)\in \cP_s,s=1,...,n$.
By the formula (\ref{I_M^* in DK' up to v power}), we have 
$$\tilde{I}^*_{K_P\oplus K_Q^*}*_rI^*_{M_{s\bullet}}=\sum_{\cO_{N_\bullet}\in T_{s,P,Q}} v^{-\langle\hat{P}+\hat{Q},\hat{P}+\hat{Q}\rangle}\zeta^{N_\bullet}_{K_P\oplus K_Q^*,M_{s\bullet}}(v)I^*_{N_\bullet},$$
where $T_{s,P,Q}=\{\cO_{N_\bullet}\subset \rC_{\ue}|\cO_{K_P\oplus K_Q^*\oplus M_{s\bullet}}\preccurlyeq \cO_{N_\bullet}\}$, and so
\begin{equation}
\begin{aligned}\label{sum=0}
\sum^n_{s=1} \sum_{(P,Q)\in \cP_s}\xi_{s,P,Q}(v)\sum_{\cO_{N_\bullet}\in T_{s,P,Q}} v^{-\langle\hat{P}+\hat{Q},\hat{P}+\hat{Q}\rangle}\zeta^{N_\bullet}_{K_P\oplus K_Q^*,M_{s\bullet}}(v)I^*_{N_\bullet}=0.
\end{aligned}
\end{equation}
Consider the minimal elements in the finite set
$$\{\cO_{K_P\oplus K_Q^*\oplus M_{s\bullet}}\subset \rC_{\ue}|(P,Q)\in \cP_s,s=1,...,n\},$$
let $\cO_{K_{P'}\oplus K_{Q'}^*\oplus M_{s'\bullet}}$ be a minimal element, then it is also minimal in 
$$\bigsqcup_{s=1}^n\bigsqcup_{(P,Q)\in \cP_s}T_{s,P,Q},$$
and so the coefficient of $I^*_{K_{P'}\oplus K_{Q'}^*\oplus M_{s'\bullet}}$ in the sum (\ref{sum=0}) vanishes, that is, 
$$\xi_{s',P',Q'}(v)\cdot v^{-\langle\hat{P'}+\hat{Q'},\hat{P'}+\hat{Q'}\rangle}\cdot\zeta^{K_{P'}\oplus K_{Q'}^*\oplus M_{s'\bullet}}_{K_{P'}\oplus K_{Q'}^*, M_{s'\bullet}}(v)=0,$$
which implies $\xi_{s',P',Q'}(v)=0$, since $v^{-\langle\hat{P'}+\hat{Q'},\hat{P'}+\hat{Q'}\rangle}\zeta^{K_{P'}\oplus K_{Q'}^*\oplus M_{s'\bullet}}_{K_{P'}\oplus K_{Q'}^*, M_{s'\bullet}}(v)\in \cZ$ is invertible. Then the sum (\ref{sum=0}) becomes 
\begin{align*}
\sum_{(s,P,Q)\not=(s',P',Q')} \xi_{s,P,Q}(v)\sum_{\cO_{N_\bullet}\in T_{s,P,Q}} v^{-\langle\hat{P}+\hat{Q},\hat{P}+\hat{Q}\rangle}\zeta^{N_\bullet}_{K_P\oplus K_Q^*,M_{s\bullet}}(v)I^*_{N_\bullet}=0.
\end{align*}
Similarly, consider the minimal elements in 
$$\{\cO_{K_P\oplus K_Q^*\oplus M_{s\bullet}}\subset \rC_{\ue}|(P,Q)\in \cP_s,s=1,...,n\}\setminus \{\cO_{K_{P'}\oplus K_{Q'}^*\oplus M_{s'\bullet}}\}$$
and repeat above process again, we eventually obtain all $\xi_{s,P,Q}(v)=0$ and $t_s=0$. Therefore, $\cI^{*,\rad}$ is a $\cT$-basis of $\cD\cK^{ss,*}$.

(b) The proof of (a) also proves that $\cD\cK^{ss,*,\red}$ is generated by $\cI^{*,\rad}$ as a $\cT^{\red}$-module. It remains to prove that elements in $\cI^{*,\rad}$ are linearly independent over $\cT^{\red}$. For any $\sum_{s=1}^nt_s*_rI^*_{M_{s\bullet}}=0$ in $\cD\cK^{ss,*,\red}$, where $t_s\in \cT^{\red}$ and $I^*_{M_{s\bullet}}\in \cI^{*,\rad}$, we need to prove all $t_s=0$ in $\cT^{\red}$. Take a lifting $\tilde{t_s}\in \cT$ of $t_s\in \cT^{\red}$, then by definition, $\sum_{s=1}^nt_s*_rI^*_{M_{s\bullet}}=0$ in $\cD\cK^{ss,*,\red}$ is equivalent to
$$\sum^n_{s=1}\tilde{t_s}*_rI^*_{M_{s\bullet}}\in \langle\tilde{I}^*_{K_R}*_r\tilde{I}_{K_R^*}^*-1|R\in \cP\rangle\subset \cD\cK^{ss,*}.$$
By Proposition \ref{Ore_2}, the element $(\tilde{I}_{K_P}*_r\tilde{I}_{K_P^*}^*-1)$ belongs to the center of $\cD\cK^{ss,*}$, so any element in the ideal $\langle\tilde{I}_{K_P}*_r\tilde{I}_{K_P^*}^*-1|P\in \cP\rangle$ can be written as $\sum_{R\in \cP} (\tilde{I}_{K_R}*_r\tilde{I}_{K_R^*}^*-1)*_rL_{R}$ for some $L_R\in \cD\cK^{ss,*}$. By (a), any $L_{R}$ can be written as $L_R=\sum_{(P,Q)\in \cP}\sum_{I^*_{M_{\bullet}}\in\cI^{*,\rad}}\theta_{P,Q,M_\bullet}(v) \tilde{I}^*_{K_{P}\oplus K_{Q}^*}*_rI^*_{M_{\bullet}}$, where $\theta_{P,Q,M_\bullet}(v)\in \cZ$. Hence we can suppose that 
\begin{align*} 
\sum^n_{s=1}\tilde{t_s}*_rI^*_{M_{s\bullet}}
=\!\!\!\!\!\sum_{R,P,Q\in \cP}\sum_{I^*_{M_{\bullet}}\in\cI^{*,\rad}}\!\!\theta_{P,Q,M_\bullet}(v)(\tilde{I}^*_{K_R}*_r\tilde{I}_{K_R^*}^*-1)*_r\tilde{I}^*_{K_{P}\oplus K_{Q}^*}*_rI^*_{M_{\bullet}}
\end{align*}
in $\cD\cK^{ss,*}$. By (a), we have
$$\tilde{t_s}=\sum_{R,P,Q\in \cP}\theta_{P,Q,M_{s\bullet}}(v)(\tilde{I}^*_{K_R}*_r\tilde{I}_{K_R^*}^*-1)*_r\tilde{I}^*_{K_{P}\oplus K_{Q}^*} \in \langle\tilde{I}^*_{K_R}*_r\tilde{I}_{K_R^*}^*-1|R\in \cP\rangle,$$
and so $t_s=0$ in $\cT^{\red}$. Therefore, $\cI^{*,\rad}$ is a $\cT^{\red}$-basis of $\cD\cK^{ss,*,\red}$.
\end{proof}

Note that the basis $\cI^{*,\rad}$ is parametrized by the isomorphism classes of radical complexes. We remark that there is an analogue result for the algebra $\cD\cH(\cA_q)$. More precisely, let $\cT_q$ be the subalgebra of $\cD\cH(\cA_q)$ generated by $b_{K_{P_i}},b_{K_{P_i}}^*$ for $i\in I$, then the multiplication defines a $\cT_q$-module structure on $\cD\cH(\cA_q)$. By \cite[Theorem 3.8]{Gorsky-2018}, $\cD\cH(\cA_q)$ is a free $\cT_q$-module with a basis $\{a_{M_\bullet}u_{[M_\bullet]}|M_\bullet\ \textrm{is radical}\}$. Under the algebra isomorphism $\cD\cH(\cA_q)\cong\bbC\otimes_{\cZ}\cD\cK^{ss,*}$ in Theorem \ref{Bridgeland's algebra via perverse sheaf}, $\cT_q$ can be identified with $\bbC\otimes_{\cZ}\cT$, but for any radical $M_\bullet\in \cC_2(\cP)$, the basis element $a_{M_\bullet}u_{[M_\bullet]}$ is not identified with $1\otimes I^*_{M_\bullet}$ in general. Also, our proof of Theorem \ref{free over torus} is independent of \cite[Theorem 3.8]{Gorsky-2018}.

\begin{definition}
We define 
\begin{align*}
&\tilde{\cI}^*=\{(\tilde{I}^*_{K_P\oplus K_Q^*})^{\pm 1}|P,Q\in \cP\}\subset \cT,\ \tilde{\cI}^{*,\red}=\{\tilde{I}^*_\alpha|\alpha\in K(\cA)\}\subset \cT^{\red}\\
&\tilde{\cI}^**_r\cI^{*,\rad}=\{(\tilde{I}^*_{K_P\oplus K_Q^*})^{\pm 1}*_rI^*_{M_\bullet}|P,Q\in \cP,I^*_{M_\bullet}\in \cI^{*,\rad}\}\subset \cD\cK^{ss,*},\\
&\tilde{\cI}^{*,\red}*_r\cI^{*,\rad}=\{\tilde{I}^*_\alpha*_rI^*_{M_\bullet}|\alpha\in K(\cA),I^*_{M_\bullet}\in \cI^{*,\rad}\}\subset \cD\cK^{ss,*,\red}.
\end{align*}
\end{definition}

\begin{theorem}\label{global basis}
(a) The algebra $\cD\cK^{ss,*}$ has a $\cZ$-basis $\tilde{\cI}^**_r\cI^{*,\rad}$.\\
(b) The algebra $\cD\cK^{ss,*,\red}$ has a $\cZ$-basis $\tilde{\cI}^{*,\red}*_r\cI^{*,\rad}$.
\end{theorem}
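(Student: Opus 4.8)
The plan is to obtain both parts purely by combining results already established. By Theorem~\ref{free over torus}(a), $\cD\cK^{ss,*}$ is a free $\cT$-module with basis $\cI^{*,\rad}$, the action being left multiplication via $*_r$; and by Lemma~\ref{basis of torus}(a), $\cT$ is a free $\cZ$-module with basis $\tilde{\cI}^*$. Likewise, Theorem~\ref{free over torus}(b) says $\cD\cK^{ss,*,\red}$ is a free $\cT^{\red}$-module with basis $\cI^{*,\rad}$, and Lemma~\ref{basis of torus}(b) says $\cT^{\red}$ is a free $\cZ$-module with basis $\tilde{\cI}^{*,\red}$. What remains is the elementary fact that if $M$ is a free $A$-module with basis $B_1$ and $A$ is a free $R$-module with basis $B_2$, then $M$ is a free $R$-module whose basis consists of the products of the elements of $B_2$ with those of $B_1$; I will spell this out in the present situation. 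It is worth recording that by Proposition~\ref{Ore_2} the subalgebra $\cT$ is central in $\cD\cK^{ss,*}$, so regrouping the sums below causes no difficulty.

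For part (a): given $x\in\cD\cK^{ss,*}$, write $x=\sum_s t_s*_rI^*_{M_{s\bullet}}$ with $t_s\in\cT$ and the $M_{s\bullet}$ radical (Theorem~\ref{free over torus}(a)), and expand each $t_s$ as a $\cZ$-linear combination of $\tilde{\cI}^*$ (Lemma~\ref{basis of torus}(a)); substituting exhibits $x$ as a $\cZ$-linear combination of elements of $\tilde{\cI}^**_r\cI^{*,\rad}$, so this set spans $\cD\cK^{ss,*}$ over $\cZ$. Conversely, suppose $\sum_{P,Q,M_\bullet}\zeta_{P,Q,M_\bullet}(v)\,\tilde{I}^*_{K_P\oplus K_Q^*}*_rI^*_{M_\bullet}=0$ in $\cD\cK^{ss,*}$. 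Collecting the terms sharing a fixed index $M_\bullet$ and using that $*_r$ defines a left $\cT$-action, this rewrites as $\sum_{M_\bullet}t_{M_\bullet}*_rI^*_{M_\bullet}=0$ with $t_{M_\bullet}=\sum_{P,Q}\zeta_{P,Q,M_\bullet}(v)\,\tilde{I}^*_{K_P\oplus K_Q^*}\in\cT$; freeness of $\cD\cK^{ss,*}$ over $\cT$ forces each $t_{M_\bullet}=0$, and $\cZ$-freeness of $\cT$ on $\tilde{\cI}^*$ then forces every $\zeta_{P,Q,M_\bullet}(v)=0$. In particular the elements of $\tilde{\cI}^**_r\cI^{*,\rad}$ are pairwise distinct, so this set is a $\cZ$-basis of $\cD\cK^{ss,*}$.

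Part (b) follows by the identical argument, with $\cT$, $\tilde{\cI}^*$ and Theorem~\ref{free over torus}(a), Lemma~\ref{basis of torus}(a) replaced throughout by $\cT^{\red}$, $\tilde{\cI}^{*,\red}$ and Theorem~\ref{free over torus}(b), Lemma~\ref{basis of torus}(b). I do not anticipate any genuine obstacle here: the substantive content — the inductive argument over the degeneration order of orbits and the computation that the relevant structure constant is a unit in $\cZ$ — has already been carried out in Theorem~\ref{free over torus}, and what is left is only the bookkeeping for a tower of free modules.
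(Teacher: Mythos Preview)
Your proposal is correct and matches the paper's own proof, which simply says the result follows directly from Lemma~\ref{basis of torus} and Theorem~\ref{free over torus}; you have merely spelled out the standard tower-of-free-modules argument that the paper leaves implicit. One small inaccuracy: $\cT$ is not literally central in $\cD\cK^{ss,*}$ --- Proposition~\ref{Ore_2} only gives quasi-commutation up to powers of $v$ --- but this remark is not actually used in your argument, since regrouping a $\cZ$-linear sum by the index $M_\bullet$ requires no commutativity whatsoever.
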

\begin{proof}
They follow from Lemma \ref{basis of torus} and Theorem \ref{free over torus} directly. 
\end{proof}

We remark that the basis $\tilde{\cI}^**_r\cI^{*,\rad},\tilde{\cI}^{*,\red}*_r\cI^{*,\rad}$ do not have positivity, because the formula ({\ref{I_M^* in DK' up to v power}}) involve subtraction. We also remark that the bar-involution on $\cK^{ss,*}$, see Definition \ref{bar-involution}, does not induce a bar-involution on $\cD\cK^{ss,*,\red}$, since the ideal $\langle \tilde{I}^*_{K_P}*_r \tilde{I}^*_{K_P^*}-1\mid P\in \cP\rangle$ is not closed under the bar-involution. Hence we are not able to study the bar-invariant property of $\tilde{\cI}^{*,\red}*_r\cI^{*,\rad}$ now. 

\section{Comparison with quantized enveloping algebra}\label{compare with Lusztig}

\subsection{Generic Hall algebra and integral form}\label{Generic Hall algebra and integral form}

In this subsection, we review the definitions of generic Ringel-Hall algebra and Bridgeland's Hall algebra defined in \cite{Ringel-1995} and \cite{Chen-Deng-2015} respectively. We also refer to \cite[Part I, II]{Murphy-2018} for details. 

For the Dynkin quiver $\mathbf{Q}=(I,H,s,t)$, let $\Phi^+$ be the set of positivity roots and $\Phi=\Phi^+\sqcup(-\Phi^+)$ be the set of roots associated to the corresponding semisimple Lie algebra $\fg$. Let $\Lambda$ be the set of functions $\Phi^+\rightarrow \mathbb{N}$. Then by Gabriel's theorem and the Krull-Schmidt property for the representations of $\mathbf{Q}$, for any finite field $\bbF_q$ and $k=\overline{\bbF_q}$, there are canonical bijections $\Lambda\cong \mathrm{Iso}(\mathcal{A}_q)\cong \mathrm{Iso}(\mathcal{A})$, where $\cA_q=\mathrm{rep}_{\bbF_q}(\mathbf{Q}), \cA=\mathrm{rep}_{k}(\mathbf{Q})$, see \cite[\S 7.2]{Murphy-2018}. We identify $\Lambda=\mathrm{Iso}(\mathcal{A})$, and denote $[M_q]\in \mathrm{Iso}(\mathcal{A}_q)$ the image of $[M]\in \Lambda$. The generic Ringel-Hall algebra $\cH_v^{\tw}$ is the $\mathbb{C}(v)$-vector space having a basis $\{u_{[M]}\mid [M]\in \Lambda\}$ together with the multiplication defined by 
$$u_{[M]}* u_{[N]}=v^{\langle \hat{M_q},\hat{N_q}\rangle}\sum_{[L]\in \Lambda}G^L_{M,N}(v)u_{[L]},$$
where $\langle \hat{M_q},\hat{N_q}\rangle$ is the Euler form which is independent of $\bbF_q$, and $G^L_{M,N}(v)\in \mathbb{Q}[v]$ is the Hall polynomial such that $G^L_{M,N}(v_q)=g^{L_q}_{M_qN_q}$, see \cite{Ringel-1995} or \cite[Proposition 7.3.1]{Murphy-2018}. The extension counting integral form $(\cH_v^{\tw})_{\mathrm{ex}}$ of $\cH_v^{\tw}$ is the $\cZ$-submodule of $\cH_v^{\tw}$ generated by $A_M(v)u_{[M]}$ for any $[M]\in \Lambda$, where $A_M(v)\in \mathbb{Q}[v]$ is the polynomial such that $A_M(v_q)=|\Aut_{\cA_q}(M_q)|$, see \cite[Proposition 7.3.1]{Murphy-2018}. By \cite[Definition 9.1.1]{Murphy-2018}, $(\cH_v^{\tw})_{\mathrm{ex}}$ is closed under multiplication, and so it is a $\cZ$-subalgebra. Indeed, we have  
$$(A_M(v)u_{[M]})* (A_N(v)u_{[N]})=v^{\langle \hat{M_q},\hat{N_q}\rangle}\sum_{[L]\in \Lambda}\frac{E^L_{M,N}(v)}{H_{M,N}(v)}(A_L(v)u_{[L]}),$$
where $E^L_{M,N}(v),H_{M,N}(v)\in \mathbb{Q}[v]$ satisfy $E^L_{M,N}(v_q)=|\Ext^1_{\cA_q}(M_q,N_q)_{L_q}|,H_{M,N}(v_q)=|\Hom_{\cA_q}(M_q,N_q)|$. Moreover, by \cite[Theorem 16.2.5]{Murphy-2018}, $(\cH_v^{\tw})_{\mathrm{ex}}$ is isomorphic to the Poisson integral form $_{\cZ}\bfU_v^{+,P}$ of $\bfU_v^+$ defined in \S \ref{Quantized enveloping algebra}. 

Let $\Lambda'$ be the set of functions $\Phi\sqcup(\bbZ_2\times I)\rightarrow \bbN$. By \cite[Lemma 4.2]{Bridgeland-2013}, for any finite field $\bbF_q$ and $k=\overline{\bbF_q}$, there are canonical bijections $\Lambda'\cong \mathrm{Iso}(\cC_2(\cP_q))\cong \mathrm{Iso}(\cC_2(\cP))$, see \cite[\S 12.1]{Murphy-2018}. In particular, let $\Lambda'_0$ be the subset of $\Lambda'$ consisting of functions supported on $\bbZ_2\times I$, then $\Lambda'_0$ corresponds to the subset of isomorphisms classes of contractible complexes. We identify $\Lambda'=\mathrm{Iso}(\cC_2(\cP))$, and denote $[M_{\bullet q}]\in \mathrm{Iso}(\cC_2(\cP_q))$ the image of $[M_{\bullet}]\in \Lambda'$. The generic Hall algebra $\cH^{\tw}_v(\Lambda')$ is the $\mathbb{C}(v)$-vector space having a basis $\{A_{M_\bullet}(v)u_{[M_\bullet]}\mid [M_\bullet]\in \Lambda'\}$ together with the multiplication defined by 
$$(A_{M_\bullet}(v)u_{[M_\bullet]})* (A_{N_\bullet}(v)u_{[N_\bullet]})=v^{\langle \hat{M_q^1},\hat{N_q^1}\rangle+\langle \hat{M_q^0},\hat{N_q^0}\rangle}\sum_{[L_\bullet]\in \Lambda'}\frac{E^{L\bullet}_{M_\bullet,N_\bullet}(v)}{H_{M_\bullet,N_\bullet}(v)}A_{L_\bullet}(v)u_{[L_\bullet]},$$
where $E^{L\bullet}_{M_\bullet,N_\bullet}(v), H_{M_\bullet,N_\bullet}(v)\in \mathbb{Q}[v]$ satisfy $E^{L\bullet}_{M_\bullet,N_\bullet}(v_q)=|\Ext^1_{\cC_2(\cP_q)}(M_{\bullet q},N_{\bullet q})_{L_{\bullet q}}|$ and $H_{M_\bullet,N_\bullet}(v_q)=|\Hom_{\cC_2(\cP_q)}(M_{\bullet q},N_{\bullet q})|$, see \cite{Chen-Deng-2015} or \cite[Proposition 12.2.1]{Murphy-2018}. Following \cite[Definition 13.1.3 \& 13.1.4]{Murphy-2018}, we define the generic Bridgeland's Hall algebra $\cD\cH_v^{\red}$ by 
\begin{align*}
\cD\cH_v&=\cH^{\tw}_v(\Lambda')[(A_{M_\bullet}u_{[M_\bullet]})^{-1}\mid [M_\bullet]\in \Lambda'_0],\\ 
\cD\cH_v^{\red}&=\cD\cH_v/\langle (A_{M_\bullet}u_{[M_\bullet]})*(A_{M_\bullet^*}u_{[M_\bullet^*]})-1\mid [M_\bullet]\in \Lambda'_0\rangle.
\end{align*}
The extension counting integral form $\cH^{\tw}_v(\Lambda')_{\ex}$ of $\cH^{\tw}_v(\Lambda')$ is the $\cZ$-submodule of $\cH^{\tw}_v(\Lambda')$ generated by $A_{M_\bullet}(v)u_{[M_\bullet]}$ for any $[M_\bullet]\in \Lambda'$. By \cite[Proposition 12.2.1]{Murphy-2018}, it is a $\cZ$-subalgebra. The extension counting integral form $(\cD\cH_v^{\red})_{\ex}$ of $\cD\cH_v^{\red}$ is the $\cZ$-submodule of $\cD\cH_v^{\red}$ generated by $A_{M_\bullet}(v)u_{[M_\bullet]}$ for any $[M_\bullet]\in \Lambda'$, see \cite[Definition 15.1.1]{Murphy-2018}. By \cite[Proposition 12.2.1]{Murphy-2018}, it is a $\cZ$-subalgebra. Moreover, $(\cD\cH_v^{\red})_{\ex}$ is isomorphic to the Poisson integral form $_{\cZ}\bfU_v^{P}$ of $\bfU_v$ defined in \S \ref{Quantized enveloping algebra}. 

\subsection{Relation between $\cD\cK^{ss,*,\red}$ and $(\cD\cH_v^{\red})_{\ex}$}

In this subsection, we study the relation between the $\cZ$-algebra $\cD\cK^{ss,*,\red}$ defined in \S \ref{Bridgeland's Hall algebra via perverse sheaves} and the extension counting integral form $(\cD\cH_v^{\red})_{\ex}$ of the generic Bridgeland's Hall algebra defined in \S \ref{Generic Hall algebra and integral form}.

Recall that there is an algebra isomorphism $\cH^{\tw}(\cC_2(\cP_q))\xrightarrow{\cong}  \bbC\otimes_{\cZ}\cK^{ss,*}$ for any finite field $\bbF_q$, see \eqref{three algebra isomorphism}. From these isomorphisms, we obtain a $\bbC(v)$-algebra isomorphism $\Psi_v:\cH^{\tw}_v(\Lambda')\xrightarrow{\cong} \bbC(v)\otimes_{\cZ}\cK^{ss,*}$. Moreover, by Lemma \ref{image of b}, if $\cO_{M_\bullet}\subset \rC_{\ue}$ is maximal with respect to the partial order $\preccurlyeq$, then $\Psi_v(A_{M_\bullet}(v)u_{[M_\bullet]})=v^{-\dim \cO_{M_\bullet}}I_{M_\bullet}^*$.

\begin{theorem}\label{two integral forms}
The $\bbC(v)$-algebra isomorphism $\Psi_v:\cH^{\tw}_v(\Lambda')\xrightarrow{\cong} \bbC(v)\otimes_{\cZ}\cK^{ss,*}$ can be restricted to a $\cZ$-algebra isomorphism $\Psi_v:\cH^{\tw}_v(\Lambda')_{\ex}\rightarrow \cK^{ss,*}$, and then it induces a $\cZ$-algebra isomorphism $\Phi_v:(\cD\cH_v^{\red})_{\ex}\rightarrow \cD\cK^{ss,*,\red}$.
\end{theorem}
\begin{proof}
We first prove that $\Psi_v(\cH^{\tw}_v(\Lambda')_{\ex})\subset \cK^{ss,*}$, that is $\Psi_v(A_{M_\bullet}(v)u_{[M_\bullet]})\in \cK^{ss,*}$ for any $[M_\bullet]\in \Lambda'$. By \cite[Lemma 4.2]{Bridgeland-2013}, $M_\bullet\cong C_A\oplus C_B^*\oplus K_P\oplus K_Q^*$ for some $A,B\in \cA$ and $P,Q\in \cP$, where $C_A,C_B^*$ are given by minimal projective resolutions of $A,B$ respectively. By \cite[Lemma 13.1.2]{Murphy-2018}, we have 
$$A_{M_\bullet}(v)u_{[M_\bullet]}=v^n(A_{C_A\oplus C_B^*}(v)u_{[C_A\oplus C_B^*]})*(A_{K_P}(v)u_{[K_P]})*(A_{K_Q^*}(v)u_{[K_Q^*]})$$ 
for some $n\in \bbZ$. Since $\cO_{K_P},\cO_{K_Q^*}$ are maximal, we have 
$$\Phi_v(A_{K_P}(v)u_{[K_P]})=v^{-\dim \cO_{K_P}}I_{K_P}^*,\ \Phi_v(A_{K_Q^*}(v)u_{[K_Q^*]})=v^{-\dim \cO_{K_Q^*}}I_{K_Q^*}^*\in \cK^{ss,*}.$$
We only need to prove that $\Phi_v(A_{C_A\oplus C_B^*}(v)u_{[C_A\oplus C_B^*]})\in \cK^{ss,*}$.
By the proof of \cite[Lemma 4.7]{Bridgeland-2013}, we have 
\begin{align*}
&(A_{C_A}(v)u_{[C_A]})*(A_{C_B^*}(v)u_{[C_B^*]})\\
=&v^{m}(A_{C_A\oplus C_B^*}(v)u_{[C_A\oplus C_B^*]})*(A_{K_{P'}}(v)u_{[K_{P'}]})*(A_{K_{Q'}^*}(v)u_{[K_{Q'}^*]})\\
+&\sum c_{A',B',P'',Q''}(v)(A_{C_{A'}\oplus C_{B'}^*}(v)u_{[C_{A'}\oplus C_{B'}^*]})*(A_{K_{P''}}(v)u_{[K_{P''}]})*(A_{K_{Q''}^*}(v)u_{[K_{Q''}^*]})
\end{align*}
where the sum is taken over $A',B'\in \cA,P'',Q''\in \cP$ such that $\hat{A'}+\hat{B'}<\hat{A}+\hat{B}$ and $<$ is the partial order on the Grothendieck group $K(\cA)$ defined by \cite[\S 1.3 (2)]{Bridgeland-2013}, and $c_{A',B',P'',Q''}(v)\in \cZ$. If $\hat{A}+\hat{B}=0$, then $A=B=0$, and we have $\Phi_v(A_{K_P\oplus K_Q^*}(v)u_{[K_P\oplus K_Q^*]})=v^{-\dim \cO_{K_P\oplus K_Q^*}}I_{K_P\oplus K_Q^*}^*\in \cK^{ss,*}$. Under the assumption that $\Phi_v(A_{C_A}(v)u_{[C_A]}),\Phi_v(A_{C_B^*}(v)u_{[C_B^*]})\in \cK^{ss,*}$, we can make an induction on $K(\cA)$ to prove $\Phi_v(A_{C_A\oplus C_B^*}(v)u_{[C_A\oplus C_B^*]})\in \cK^{ss,*}$.

Suppose that $\cO_{C_A}\subset \rC_{\ue}$. We make a descending induction on the set of $\rG_{\ue}$-orbits in $\rC_{\ue}$ to prove $\Phi_v(A_{C_A}(v)u_{[C_A]})\in \cK^{ss,*}$. If $\cO_{C_A}$ is maximal, then $\Psi_v(A_{C_A}(v)u_{[C_A]})=v^{-\dim \cO_{C_A}}I_{C_A}^*\in \cK^{ss,*}$. In general, since $\mathbf{Q}$ is a Dynkin quiver, $A\cong A_1\oplus\cdots\oplus A_n$, where $A_s$ is rigid and is the direct sum of some copies of an indecomposable object, such that,
$$\Hom_{\cA}(A_t,A_s)=\Ext^1_{\cA}(A_s,A_t)=0\ \textrm{for any} s<t.$$ 
Let $0\rightarrow P_s\rightarrow Q_s\rightarrow A_s\rightarrow 0$ be the minimal projective resolution of $A_s$, then the direct sum of these short exact sequence is the minimal projective resolution of $A$, that is, $C_A=C_{A_1}\oplus\cdots\oplus C_{A_n}$. Since $\Ext^1_{\cC_2(\cP)}(C_{A_s},C_{A_s})=\Ext^1_{\cA}(A_s,A_s)=0$, the orbit $\cO_{C_{A_s}}$ is maximal, we have $\Psi_v(A_{C_{A_s}}(v)u_{[C_{A_s}]})=v^{-\dim \cO_{C_{A_s}}}I_{C_{A_s}}^*\in \cK^{ss,*}$. By the proof of \ref{inductive lemma}, we have 
$$(A_{C_{A_1}}(v)u_{[C_{A_1}]})*\cdots*(A_{C_{A_n}}(v)u_{[C_{A_n}]})=v^l(A_{C_{A}}(v)u_{[C_{A}]})+\!\!\sum_{\cO_{A}\subset \overline{\cO_{A'}}} c_{A'}(v)(A_{C_{A'}}(v)u_{[C_{A'}]})$$
for some $l\in \bbZ$ and $c_{A'}(v)\in \cZ$. By inductive hypothesis, $\Psi_v((A_{C_{A'}}(v)u_{[C_{A'}]}))\in \cK^{ss,*}$, and so $\Phi_v(A_{C_A}(v)u_{[C_A]})\in \cK^{ss,*}$. Similarly, $\Phi_v(A_{C_B^*}(v)u_{[C_B^*]})\in \cK^{ss,*}$.

Hence $\Psi_v(\cH^{\tw}_v(\Lambda')_{\ex})\subset \cK^{ss,*}$. Note that both of them are free $\cZ$-modules with bases parametrized by $\Lambda'=\mathrm{Iso}(\cC_2(\cP))$. Hence $\Psi_v(\cH^{\tw}_v(\Lambda')_{\ex})=\cK^{ss,*}$. By the definitions of $\cD\cK^{ss,*,\red}$ and $(\cD\cH_v^{\red})_{\ex}$, $\Psi_v:(\cH^{\tw}_v(\Lambda')_{\ex})\rightarrow \cK^{ss,*}$, it is clear that $\Psi_v:\cH^{\tw}_v(\Lambda')_{\ex}\rightarrow \cK^{ss,*}$ induces a $\cZ$-algebra isomorphism $\Phi_v:(\cD\cH_v^{\red})_{\ex}\rightarrow \cD\cK^{ss,*,\red}$.
\end{proof}

Recall that in \S \ref{Lusztig's categorification}, for any $\nu\in \bbN[I]$, we fix a $I$-graded space $V$ to define $\rE_{\nu},\rG_{\nu}$. Now, we follow \cite{Lusztig-1998,Xiao-Xu-Zhao-2019}, and suppose that $V$ has a $\bbF_q$-rational structure with Frobenius map $\sigma:V\rightarrow V$. Since $\mathbf{Q}$ is a Dynkin quiver, by \cite[Theorem 2.8]{Schiffmann-2012} and \cite[Lemma 9.2.4]{Lusztig-1993}, Lusztig restriction functor $\Res^{\nu}_{\nu',\nu''}:\cD^b_{\rG_{\nu}}(\rE_{\nu})\rightarrow \cD^b_{\rG_{\nu'}\times \rG_{\nu''}}(\rE_{\nu'}\times \rE_{\nu''})$ for any $\nu,\nu',\nu''\in \bbN[I]$ such that $\nu=\nu'+\nu''$, can be restricted to 
\begin{align*}
\Res^{\nu}_{\nu',\nu''}:\cD^{b,ss}_{\rG_{\nu}}(\rE_{\nu})&\rightarrow \cD^{b,ss}_{\rG_{\nu'}}(\rE_{\nu'})\boxtimes \cD^{b,ss}_{\rG_{\nu''}}(\rE_{\nu''}),
\end{align*}
and we have the linear maps $\res^{\nu}_{\nu',\nu''}:\tilde{\cH}_{\rG_\nu^\sigma}(\rE_\nu^\sigma)\rightarrow \tilde{\cH}_{\rG_{\nu'}^\sigma}(\rE_{\nu'}^\sigma)\otimes \tilde{\cH}_{\rG_{\nu''}^\sigma}(\rE_{\nu''}^\sigma)$. We define 
$$\tilde{\Res}^{\nu}_{\nu',\nu''}=\Res^{\nu}_{\nu',\nu''}[2\sum_{i\in I}\nu'_i\nu''_i](\sum_{i\in I}\nu'_i\nu''_i),\ \tilde{\res}^{\nu}_{\nu',\nu''}=v_q^{-2\sum_{i\in I}\nu'_i\nu''_i}\res^{\nu}_{\nu',\nu''},$$ 
then all $\tilde{\Res}^{\nu}_{\nu',\nu''},\tilde{\res}^{\nu}_{\nu',\nu''}$ for $\nu=\nu'+\nu''$ induce multiplications on the graded duals 
\begin{align*}
&\cK^{+,*}=\bigoplus_{\nu}\cK^{+,*}_\nu=\bigoplus_{\nu}\Hom_{\cZ}(\cK^+_\nu,\cZ),\\
\tilde{\cH}^{*}(\cA_q)=&\bigoplus_\nu\tilde{\cH}^*_{\rG_\nu^\sigma}(\rE_\nu^\sigma)=\bigoplus_\nu\Hom_{\bbC}(\tilde{\cH}_{\rG_\nu^\sigma}(\rE_\nu^\sigma),\bbC)
\end{align*}
respectively. By \cite[Lemma 2]{Xiao-Xu-Zhao-2019} and Theorem \ref{sheaf-function correspondence}, there are algebra isomorphisms
\begin{equation}
\begin{aligned}\label{iso-1}
\cH^{\tw}(\cA_q)\xrightarrow{\cong} &\tilde{\cH}^{*}(\cA_q)\xrightarrow{\cong} \bbC\otimes_{\cZ}\cK^{+,*}\\
a_Mu_{[M]}\mapsto &\ \ 1_{\cO_M}^*\  \mapsto \ \chi^*(1_{\cO_M}^*),
\end{aligned}
\end{equation}
where $\{1_{\cO_M}^*\mid\cO_M\subset\rE^\sigma_\nu\}\subset \tilde{\cH}^*_{\rG_\nu^\sigma}(\rE_\nu^\sigma)$ is the dual basis $\{1_{\cO_M}\mid \cO_M\subset\rE^\sigma_\nu\}\subset \tilde{\cH}_{\rG_\nu^\sigma}(\rE_\nu^\sigma)$, which are analogue to the algebra isomorphisms in Theorem \ref{Bridgeland's algebra via perverse sheaf},
\begin{equation}
\begin{aligned}\label{iso-2}
\cD\cH^{\red}(\cA_q)\xrightarrow{\cong} &\cD\tilde{\cH}^{*,\red}(\cA_q)\xrightarrow\cong \bbC\otimes_{\cZ}\cD\cK^{ss,*,\red}\\
a_{M_\bullet}u_{[M_\bullet]}\mapsto &\ \ \ \ \ 1_{\cO_{M_\bullet}}^*\  \ \ \mapsto\ \ \  \ \chi^*(1_{\cO_{M_\bullet}}^*).
\end{aligned}
\end{equation}

By the same argument as $\Phi_v(A_{C_A}(v)u_{[C_A]})\in \cK^{ss,*}$ in the proof of Theorem \ref{two integral forms}, we have the following theorem.

\begin{theorem}\label{two integral forms'}
There is a $\cZ$-algebra isomorphism $\Psi_v^+:(\cH_v^{\tw})_{\ex}\rightarrow \cK^{+,*}$.
\end{theorem}

\subsection{Comparing the bases}

Recall that in \S \ref{Hall algebra for the category of two-periodic complexes}, for any $M\in \cA$, let $0\rightarrow P\xrightarrow{f} Q\xrightarrow{g} M\rightarrow 0$ be the minimal projective resolution of $M$, we define the element 
$$E_M=v_q^{\langle\hat{P},\hat{M}\rangle}b_{-\hat{P}}*(a_{C_M}u_{[C_M]})\in \cD\cH^{\red}(\cA_q),$$ 
where $C_M=(P,Q,f,0)\in \cC_2(\cP)$.

\begin{lemma}\label{inj homo}
There is an injective $\cZ$-algebra homomorphism $\Phi_v^+:\cK^{+,*}\rightarrow \cD\cK^{ss,*,\red}$.
\end{lemma}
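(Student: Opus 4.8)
The plan is to construct $\Phi_q^+$ by combining the known Hall-algebra level embedding of Bridgeland with the three sheaf-function isomorphisms (\ref{iso-1}) and (\ref{iso-2}). Recall from Bridgeland's theorem that $\rU_{v=v_q}(\fn^+)\hookrightarrow \cD\cH^{\red}_q(\cA)$ via $E_i\mapsto E_{S_i}/(q-1)$, and that $\cH_q^{\tw}(\cA)\cong \rU_{v=v_q}(\fn^+)$ by Ringel--Green when $\mathbf{Q}$ is Dynkin; composing these gives an injective algebra homomorphism $\iota_q:\cH_q^{\tw}(\cA)\hookrightarrow \cD\cH^{\red}_q(\cA)$ sending $u_{[S_i]}\mapsto E_{S_i}/(q-1)$, and more generally $u_{[M]}$ to an expression built from the $E_M$'s. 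First I would use (\ref{iso-1}) to identify $\bbC\otimes_{\cZ}\cK^{+,*}$ with $\cH_q^{\tw}(\cA)$ as algebras, and (\ref{iso-2}) to identify $\bbC\otimes_{\cZ}\cD\cK^{ss,*,\red}$ with $\cD\cH^{\red}_q(\cA)$; then $\Phi_q^+$ is simply the map making the square
\begin{equation*}
\begin{array}{ccc}
\bbC\otimes_{\cZ}\cK^{+,*} & \xrightarrow{\ \Phi_q^+\ } & \bbC\otimes_{\cZ}\cD\cK^{ss,*,\red}\\[4pt]
\cong\downarrow & & \downarrow\cong\\[4pt]
\cH_q^{\tw}(\cA) & \xrightarrow{\ \iota_q\ } & \cD\cH^{\red}_q(\cA)
\end{array}
\end{equation*}
commute. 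Since the vertical maps are algebra isomorphisms and $\iota_q$ is an injective algebra homomorphism, $\Phi_q^+$ is automatically an injective algebra homomorphism, which is exactly the claim.

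The key steps, in order, are: (1) recall that for a Dynkin quiver all representations and all two-periodic projective complexes are defined over $\bbF_q$ (already used in the proof of Theorem \ref{induction algebra}), so that the sheaf-theoretic bases $\cB$, $\cI$ and the function-theoretic bases correspond under the trace map and (\ref{iso-1}), (\ref{iso-2}) are genuine algebra isomorphisms after $\bbC\otimes_{\cZ}(-)$; (2) spell out $\iota_q$ on generators: $u_{[S_i]}\mapsto E_{S_i}/(q-1)$, noting $q-1$ is invertible in $\bbC$, and check this respects the twisted multiplication and the quantum Serre relations --- but this is exactly Bridgeland's theorem composed with Ringel--Green, so nothing new is needed; (3) transport $\iota_q$ through the two identifications to define $\Phi_q^+$ and observe functoriality gives the algebra homomorphism property and injectivity for free.

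The one genuine subtlety is the $\cZ$-integrality, i.e.\ whether $\Phi_q^+$ is really defined on $\bbC\otimes_{\cZ}\cK^{+,*}$ compatibly for all powers $\sigma^n$ of the Frobenius, rather than merely for the fixed $q$. The resolution is the same trick already used repeatedly in the paper (e.g.\ in Proposition \ref{Ore_2} and Lemma \ref{maximal orbit induction}): the structure constants of $\iota_q$ with respect to the bases $\cB^*$ and $\tilde{\cI}^{*,\red}*_r\cI^{*,\rad}$ are a priori Laurent polynomials in $v$ (because restriction, induction, and the localization maps are geometric and the Hall numbers are polynomial in $q$ for Dynkin quivers), and Bridgeland's isomorphism holding over every $\bbF_{q^n}$ forces these polynomials to agree at $v=v_q^n$ for infinitely many $n$, hence to be equal. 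Thus $\iota_q$ descends to a $\cZ$-linear (indeed $\cZ$-algebra) map $\cK^{+,*}\to \cD\cK^{ss,*,\red}$, and $\Phi_q^+=\bbC\otimes_{\cZ}(\text{this map})$. I expect the main obstacle is bookkeeping: carefully matching the normalizations (the shifts $[2\sum\nu'_i\nu''_i]$ and Tate twists in $\tilde{\Res}^{\nu}_{\nu',\nu''}$ versus the twists $|\ue',\ue''|$ in $\Res^{\ue}_{\ue',\ue''}$, and the $E_M$ versus $u_{[S_i]}$ conventions) so that the square genuinely commutes on the nose, not just up to an overall power of $v$ --- but this is routine once the conventions are pinned down, and no deep new input is required beyond the cited results of Bridgeland, Ringel--Green, and Xiao--Xu--Zhao.
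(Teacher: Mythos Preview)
Your proposal is correct and follows essentially the same approach as the paper: construct $\Phi_q^+$ by transporting a Hall-algebra level embedding $\cH_q^{\tw}(\cA)\hookrightarrow\cD\cH_q^{\red}(\cA)$ through the isomorphisms (\ref{iso-1}) and (\ref{iso-2}). The only difference is that the paper cites Bridgeland's Lemma~4.3 directly, which gives the embedding $a_Mu_{[M]}\mapsto E_M$ on the whole Hall algebra in one step, whereas you reconstruct it by composing the Ringel--Green isomorphism $\cH_q^{\tw}(\cA)\cong\rU_{v=v_q}(\fn^+)$ with Bridgeland's embedding of the quantum group; since $a_{S_i}=q-1$, the two maps agree on generators and hence coincide.

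Your discussion of $\cZ$-integrality and compatibility over all $\bbF_{q^n}$ is unnecessary for this lemma as stated: the claim is only about the $\bbC\otimes_{\cZ}(-)$ level for the fixed $q$, so the commutative square argument with the vertical isomorphisms already finishes the proof. (That integrality argument is precisely what the paper uses \emph{after} this lemma, to pass from $\Phi_q^+$ to the generic $\Phi_{v^2}$, so your instinct is correct but premature here.)
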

\begin{proof}
By \cite[Lemma 4.3]{Bridgeland-2013}, there is an injective algebra homomorphism
\begin{align*}
\cH^{\tw}(\cA_q)\rightarrow \cD\cH^{\red}(\cA_q), a_Mu_{[M]}\mapsto E_M.
\end{align*}
Moreover, by Proposition \cite[Proposition 15.1.2]{Murphy-2018}, there is an injective $\cZ$-algebra homomorphism 
$$(\cH_v^{\tw})_{\ex}\rightarrow (\cD\cH_v^{\red})_{\ex},A_M(v)u_{[M]}\mapsto v^{\langle\hat{P},\hat{M}\rangle}(A_{K_P}(v)u_{[K_P]})^{-1}(A_{C_M}(v)u_{[C_M]}).$$
Combining with Theorem \ref{two integral forms} and \ref{two integral forms'}, we obtain the desired injective $\cZ$-algebra homomorphism $\Phi_v^+:\cK^{+,*}\rightarrow \cD\cK^{ss,*,\red}$.
\end{proof}

The canonical basis of $\cK^+_\nu$ is the $\cZ$-basis $\cB_\nu=\{I_M\mid \cO_M\subset \rE_\nu\}$, where $I_M\in \cK^+_\nu$ is the image of the intersection cohomology complex with Tate twist
$$\IC_M=\IC(\cO_M,\overline{\bbQ}_l)(\frac{\dim \cO_M}{2})$$
for any $\rG_\nu$-orbit $\cO_M\subset \rE_\nu$. Let $\cB^*_\nu=\{I^*_M\mid \cO_M\subset \rE_\nu\}\subset \cK^{+,*}_\nu$ be the dual basis of $\cB_{\nu}$, and 
$$\cB^*=\bigsqcup_{\nu}\cB_{\nu}^*.$$

\begin{proposition}\label{rigid lemma}
If $M\in \cA$ is rigid, that is, $\Ext^1_{\cA}(M,M)=0$, suppose the dimension vector of $M$ is $\nu$ and the projective dimension vector of $C_M$ is $\ue$, then the orbits $\cO_M\subset \rE_\nu, \cO_{C_M}\subset \rC_{\ue}$ are maximal with respect to the partial order $\preccurlyeq$. Moreover, we have
$$\Phi_v^+(v^{-\dim \cO_M}I_M^*)=v^{-\langle\hat{P},\hat{P}\rangle}\tilde{I}^*_{-\hat{P}}*_rI^*_{C_M}.$$
\end{proposition}
\begin{proof}
It is well-known that $\cO_M\subset \rE_\nu$ is a maximal orbit. Indeed, the codimension of $\cO_M\cong\rG_\nu/\Stab(\cO_M)\cong \rG_{\nu}/\Aut_{\cA}(M)$ in $\rE_\nu$ is 
$$\sum_{h\in H}\nu_{h'}\nu_{h''}-(\sum_{i\in I}\nu_i^2-\langle\hat{M},\hat{M}\rangle)=0.$$
By the short exact sequence $$0\rightarrow \Hom_{\cA}(Q,P)\rightarrow \End_{\cC_2(\cP)}(C_M)\rightarrow \End_{\cA}(M)\rightarrow 0,$$
we have
$$\dim \End_{\cC_2(\cP)}(C_M)=\langle\hat{Q},\hat{P}\rangle+\langle\hat{M},\hat{M}\rangle,$$
and then the dimension of $$\cO_{C_M}\cong \rG_{\ue}/\Stab(\cO_{C_M})\cong (\Aut_{\cA}(P)\times \Aut_{\cA}(Q))/\Aut_{\cC_2(\cP)}(C_M)$$ 
is equal to
$$\langle\hat{P},\hat{P}\rangle+\langle\hat{Q},\hat{Q}\rangle-(\langle\hat{Q},\hat{P}\rangle+\langle\hat{M},\hat{M}\rangle)=\langle \hat{P},\hat{Q}\rangle.$$
Thus $\cO_{C_M}\subset \Hom_{\cA}(P,Q)\times \{0\}$ is an open dense subset, and it is a maximal orbit in $\rC_{\ue}$.

By Lemma \ref{image of b}, under the algebra isomorphisms (\ref{iso-2}), the images of $b_{-\hat{P}}$ and $a_{C_M}u_{[C_M]}$ are $\tilde{I}^*_{-\hat{P}}$ and $v^{-\dim \cO_{C_M}}I^*_{C_M}$ respectively, and so the image of $E_M$ is 
$$v^{\langle\hat{P},\hat{M}\rangle-\langle \hat{P},\hat{Q}\rangle}\tilde{I}^*_{-\hat{P}}*_rI^*_{C_M}= v^{-\langle\hat{P},\hat{P}\rangle}\tilde{I}^*_{-\hat{P}}*_rI^*_{C_M}.$$
By similar argument as Lemma \ref{image of b}, under the algebra isomorphisms (\ref{iso-1}), the image of $a_Mu_{[M]}$ is $1\otimes v^{-\dim \cO_M}I_M^*$. Therefore, we have $$\Phi_v^+(v^{-\dim \cO_M}I_M^*)= v^{-\langle\hat{P},\hat{P}\rangle}\tilde{I}^*_{-\hat{P}}*_rI^*_{C_M},$$
as desired.
\end{proof}

\bibliography{mybibfile}

\end{document}